\numberwithin{equation}{section}\swapnumbers
\newcolumntype{D}{>{\setbox0=\hbox\bgroup}c<{\egroup}@{}}
\newcolumntype{d}{>{\setbox0=\hbox\bgroup$}c<{$\egroup}@{}}
\newcolumntype{Z}{>{\setbox0=\hbox\bgroup}c<{\egroup}@{\hspace*{-\tabcolsep}}}
\setlist[enumerate,2]{label=\textit{\alph*)},ref=\textit{\alph*})}
\setlist[enumerate,1]{label=\textit{(\arabic*)},ref=\textit{(\arabic*)}}
\tikzstyle{startstop} = [rectangle, rounded corners, minimum width=3cm, minimum height=1cm,text centered, draw=black, fill=red!30]
\tikzstyle{io} = [rectangle, trapezium left angle=70, trapezium right angle=110, minimum width=3cm, minimum height=1cm, text centered, draw=black, fill=blue!30]
\tikzstyle{process} = [rectangle, minimum width=3cm, minimum height=1cm, text centered, draw=black, fill=orange!30]
\tikzstyle{decision} = [rectangle, minimum width=1cm, minimum height=1cm, text centered, draw=black, fill=green!30]
\tikzstyle{arrow} = [thick,->,>=stealth]
\renewcommand{\[}{\begin{equation}}
\renewcommand{\]}{\end{equation}}
\theoremstyle{plain}    
\newtheorem{thm}{Theorem} 
\newtheorem{lem}[thm]{Lemma}
 \newtheorem{cor}[thm]{Corollary}
 \theoremstyle{remark}
 \newtheorem{rem}[thm]{Remark}
 \newtheorem{rems}[thm]{Remarks}
 \theoremstyle{definition}
 \newtheorem{defn}[thm]{Definition}
  \newtheorem{example}[thm]{Example}
  \newtheorem{lis}[thm]{Table}
  \numberwithin{thm}{section}
\newcommand{\Aut}{\operatorname{Aut}}
\newcommand{\sA}{{\mathsf A}}
\newcommand{\sB}{{\mathsf B}}
\newcommand{\sC}{{\mathsf C}}
\newcommand{\sD}{{\mathsf D}}
\newcommand{\sE}{{\mathsf E}}
\newcommand{\sF}{{\mathsf F}}
\newcommand{\sG}{{\mathsf G}}
\newcommand{\cD}{{\mathcal D}}
\newcommand{\rk}{\operatorname{rk}}
\newcommand{\aaa}{\mathfrak{a}}
\newcommand{\Ca}{{\bf a}}
\newcommand{\Cc}{{\bf c}}
\newcommand{\N}{\mathbb{N}}
\newcommand{\PP}{\mathbb{P}}
\newcommand{\A}{\mathcal{A}}
\renewcommand{\P}{\mathcal{P}}
\newcommand{\R}{\mathbb{R}}
\newcommand{\Z}{\mathbb{Z}}
\newcommand{\Q}{\mathbb{Q}}
\newcommand{\C}{\mathbb{C}}
\renewcommand{\a}{\alpha}
\renewcommand{\b}{\beta}
\renewcommand{\aa}{\overline{\alpha}}
\renewcommand{\epsilon}{\varepsilon}
\renewcommand{\phi}{\varphi}
\renewcommand{\rho}{\varrho}
\newcommand{\e}{\epsilon}
\newcommand{\w}{\omega}
\newcommand{\<}{\langle}
\renewcommand{\>}{\rangle}
\newcommand{\G}{\Gamma}
\renewcommand{\L}{\Lambda}
\renewcommand{\d}{\delta}
\newcommand{\GL}{\operatorname{GL}}
\newcommand{\PGL}{\operatorname{PGL}}
\newcommand{\SP}{\operatorname{Sp}}
\newcommand{\GSP}{\operatorname{GSp}}
\newcommand{\PSP}{\operatorname{PSp}}
\newcommand{\SPIN}{\operatorname{Spin}}
\newcommand{\SO}{\operatorname{SO}}
\renewcommand{\O}{\operatorname{O}}
\newcommand{\PSO}{\operatorname{PSO}}
\renewcommand{\2}{\frac12}
\newcommand{\3}{\nicefrac12}
\newcommand{\leer}{\varnothing}
\def\W#1|#2|{$\vcenter{$\w_1=#1$\hfill\lb$\w_2=#2$}$}
\def\WW#1|#2|{{$\w_1=#1$}}
\def\WWW#1|#2|{$\w\sim#1$}
\newcommand{\symb}{\hbox{\tiny$\blacksquare$}}
\newcommand{\btr}{\symb}
\newcommand{\btl}{\symb}
\newcommand{\bt}{\symb}
\newcommand{\btd}{\symb}
\newcommand{\btlu}{\symb}
\newcommand{\btld}{\symb}
\newcommand{\lb}{\hfill\linebreak}
\newcommand{\kk}{\mathfrak{k}}
\newcommand{\into}{\hookrightarrow}
\def\l/{``}
\def\r/{''}
\newsavebox{\quadrat}
\savebox{\quadrat}{\linethickness{4pt}\put(0,200){\line(1,0){400}}}
\newlength{\abst}
\def\boxed#1{$\vcenter{\vbox{\vspace{\abst}\hbox{#1}\vspace{\abst}}}$}
\def\boxit#1{\boxed{#1}}
\begin{document}
\selectlanguage{english}
\title[(Quasi-)Hamiltonian manifolds of
cohomogeneity one]{(Quasi-)Hamiltonian manifolds of cohomogeneity one}
\subjclass[2010]{
  57S15, 
  14L30, 
  53D20, 
  14M27
} \keywords{Hamiltonian manifold, quasi-Hamiltonian manifold, momentum
  map, group valued momentum map, cohomogeneity, multiplicity free,
  spherical variety}
\author{Friedrich Knop} \address{Department Mathematik,
  FAU Erlangen-Nürnberg, Cauerstr. 11,
  91058 Erlangen} \email{friedrich.knop@fau.de}
\author{Kay Paulus} \address{Department Mathematik,
  FAU Erlangen-Nürnberg, Cauerstr. 11,
  91058 Erlangen} \email{math@kaypaulus.de}
\begin{abstract}
  We classify compact, connected Hamiltonian and quasi-Hamiltonian
  manifolds of cohomogeneity one (which is the same as being
  multiplicity free of rank one). The group acting is a compact
  connected Lie group (simply connected in the quasi-Hamiltonian
  case). This work is a concretization of a more general
  classification of multiplicity free manifolds in the special case of
  rank one. As a result we obtain numerous new concrete examples of
  multiplicity free quasi-Hamiltonian manifolds or, equivalently,
  Hamiltonian loop group actions.
\end{abstract}
\maketitle
\section{Introduction}
Let $K$ be a simply connected, compact Lie group. In order to study
Hamiltonian actions of the (infinite dimensional) loop group
$\mathcal LK$ on (infinite dimensional) manifolds $\mathcal M$,
Alekseev-Malkin-Meinrenken, \cite{AMM98}, introduced the notion of
quasi-Hamiltonian $K$-manifolds. These are finite dimensional
$K$-manifolds, equipped with a $2$-form
 and a momentum map.

To a certain extent, quasi-Hamiltonian manifolds are very similar to
classical Hamiltonian manifolds. This means, in particular, that a
quasi-Hamiltonian manifold can be locally described by Hamiltonian
manifolds. The most striking difference is that the target of the
momentum map is the group $K$ itself (instead of its coadjoint
representation $\kk^*$). For that reason, quasi-Hamiltonian manifolds
lack functoriality properties, like restriction to a subgroup and are
therefore more difficult to construct than Hamiltonian manifolds.

The most basic quasi-Hamiltonian manifolds are the conjugacy classes
of $K$. These are precisely the ones on which $K$ acts
transitively. The main purpose of this paper is to classify the next
case in difficulty, namely compact manifolds of cohomogeneity one,
i.e., where $K$ acts with orbits of codimension one.

Our approach is based on the papers \cite{Kno11,Kno14} where the more
general class of multiplicity free (quasi-)Hamiltonian manifolds was
considered. These are manifolds $M$ for which the momentum map induces
an injective map on orbit spaces ($M/K\into\kk^*/K$ or
$M/K\into K/\!\!/ K$, respectively). In op.\ cit. these have been
classified in terms of pairs $(\P,\G)$ where $\P$ is a convex polytope
(the momentum polytope) and $\G$ is a lattice (characterizing the
principal isotropy group). The compatibility condition between $\P$
and $\G$ is expressed in terms of a root system (affine or finite,
respectively) and the existence of certain smooth affine spherical
varieties. Even though the latter have been determined in
\cite{KVS05} (up to coverings, central tori and $\C^\times$-fibrations),
 the condition is hard to handle in practice. So the
present paper is also meant to be a (successful) test case for the
feasibility of using \cite{Kno11, Kno14} to obtain explicit
classification results. More precisely, the case considered here is
precisely that of the multiplicity free manifolds which are of rank
one, i.e., where $\rk M:=\dim\P=\rk\G=1$.
Our classification proceeds in two steps. First, there is an induction
procedure from smaller Hamiltonian manifolds. Manifolds which are not
induced are called \emph{primitive}. So, in a second step, we
determine all primitive manifolds. These can be quasi-Hamiltonian or
Hamiltonian. So, as a by-product, but of independent interest
\emph{our classification also encompasses (classical) Hamiltonian
  manifolds of cohomogeneity one}.
\begin{thm}
  Let $M$ be a primitive, multiplicity free, (quasi-)Hamiltonian
  manifold of rank one. Then $M$ corresponds to a diagram in
  \cref{ListHom}. Moreover, to each diagram there corresponds a
  manifold which is either unique (in the quasi-Hamiltonian case) or
  unique up to rescaling the symplectic structure (in the Hamiltonian
  case).
\end{thm}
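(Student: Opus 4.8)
The plan is to reduce the statement to the combinatorial classification of \cite{Kno11,Kno14} and then carry out a finite enumeration. By those papers, a compact multiplicity free (quasi-)Hamiltonian $K$-manifold is determined, up to the ambiguity in the theorem, by a pair $(\P,\G)$: the momentum polytope $\P$, sitting in $\kk^*$ in the Hamiltonian case and in a fixed alcove of $K$ in the quasi-Hamiltonian case, together with the lattice $\G$ attached to the principal isotropy group. Conversely, a pair $(\P,\G)$ is realized precisely when it is \emph{compatible} --- a condition phrased through a root system $\Phi$ (finite, resp.\ affine) and the existence of a prescribed smooth affine spherical variety at each vertex of $\P$. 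Granting this, the uniqueness claim is immediate: a diagram in \cref{ListHom} is just a presentation of the pair $(\P,\G)$, and the only residual freedom is that in the Hamiltonian case $\P\subseteq\kk^*$ may be dilated, which rescales the symplectic form, while in the quasi-Hamiltonian case $\P$ is pinned inside the alcove and no such freedom survives. It therefore remains to identify the primitive compatible pairs of rank one.

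Because $\rk M=\dim\P=1$, the polytope $\P$ is a closed segment --- not a point, as that would make $M$ a conjugacy class of rank zero --- so $M/K$, which embeds in $\P$ by multiplicity-freeness, is an interval (never a circle), and $\P$ has exactly two vertices $v_0,v_1$. To each $v_i$ the local models of \cite{Kno11,Kno14} attach a smooth affine spherical variety $Z_i$ for the complexification of the relevant stabilizer; as the tangent cone of $\P$ at $v_i$ is one-dimensional, $Z_i$ has rank at most one. The first step of the enumeration is thus to list all rank-$\le 1$ smooth affine spherical varieties, which is exactly the content of \cite{KVS05} (up to the coverings, central tori and $\C^\times$-fibrations described there). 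The edge of $\P$ encodes the remaining data: the generic isotropy lattice $\G$ and a single spherical root $\sigma$.

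The second step is to glue. The two vertex models must agree along the common edge, so the lattice $\G$ and the spherical root $\sigma$ read off from $Z_0$ must coincide with those read off from $Z_1$; this matching already excludes most pairings and dictates how the simple factors of $K$ and a possible central torus combine. The third step is to remove the non-primitive pairs: combinatorially these are exactly the ones obtained from lower-rank data by the induction construction of \cite{Kno11}, and they are recognized directly from $(\P,\G)$. What is left, worked out simple type by simple type of $K$ (with the low-rank exceptional isomorphisms handled separately), is a finite list; re-expressing each entry in the diagrammatic notation produces \cref{ListHom}. Finally, for each surviving pair one must confirm genuine compatibility --- that the required smooth affine spherical variety exists and the root-theoretic condition of \cite{Kno11,Kno14} holds --- by consulting \cite{KVS05} once more, and at the same time one identifies the manifold concretely (projective spaces, quadrics, their quasi-Hamiltonian analogues built from the double $\mathbf D(K)$, and so forth).

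The main obstacle is the enumeration itself, and within it the gluing step together with the spherical-variety check at the vertices. In the Hamiltonian case this is short: $\Phi$ is finite and the rank-$\le 1$ smooth affine spherical varieties are few. The delicate case is quasi-Hamiltonian, where $\Phi$ is affine, the alcove of $K$ has several walls, and a vertex of $\P$ may lie on an affine wall, giving local models with no Hamiltonian counterpart; tracking these affine vertices and verifying that the assembled pair $(\P,\G)$ actually satisfies the compatibility condition of \cite{Kno14} --- rather than merely appearing plausible --- is where most of the work goes.
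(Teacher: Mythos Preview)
Your proposal is essentially on the same track as the paper: reduce via \cite{Kno11,Kno14} to locally spherical pairs $(\P,\G)$ with $\P$ a segment, classify the rank-one local models at the two endpoints, glue, and check realizability. Two points are worth sharpening, since they are where the actual work lies.

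First, the gluing condition is not simply that ``$\G$ and a single spherical root $\sigma$'' match at the two ends. The paper distinguishes three regimes (\cref{def:prim5}): if both local models are homogeneous one needs the exact linear relation $\w_{\cD_1}+\w_{\cD_2}=0$; if one is homogeneous and the other inhomogeneous one needs $\langle\w_{\cD_i},\aa_{\cD_j}^\vee\rangle=-1$; if both are inhomogeneous in the affine case one needs equality of colabels. These are the constraints that actually prune the list, and your sketch does not yet isolate them.

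Second, the paper does \emph{not} enumerate ``simple type by simple type of $K$''. Instead it organizes the case analysis by $S_0^p:=S_1\cap S_2$ (the overlap of the two local diagrams) and, when $S_0^p=\leer$, by the number $N\in\{0,1,2,3,4\}$ of edges joining $S_1'$ to $S_2'$ (\cref{lem:SSSS} and \S\ref{ss:nl}--\ref{sss:4}). This is considerably more efficient than running over all affine and finite Dynkin types, because most types are handled uniformly once $S_0^p$ and $N$ are fixed. Your plan would reach the same list but with substantially more repetition. Also, for the rank-one local models the paper gives a short self-contained argument (\cref{thm:SASV1}) via Luna's slice theorem and the Akhiezer/Kac lists rather than extracting from \cite{KVS05}; this avoids having to untangle coverings, central tori, and $\C^\times$-fibrations.
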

In \cref{ListHom} we presented each case by a diagram which is very
close to Luna's \cite{Lun01} for classifying spherical varieties.
The classification yields many previously known quasi-Hamiltonian
manifolds. For example, the spinning $4$-sphere of Hurtubise-Jeffrey
\cite{HJ00}, Alekseev-Malkin-Woodward \cite{AMW02}) and its
generalization, the spinning $2n$-sphere, by Hurtubise-Jeffrey-Sjamaar
\cite{HJS06} are on our list. We also recover the $\SP(2n)$-action on
$\PP^n_{\mathbb{H}}$ discovered by Eshmatov \cite{Esh09} as part of a
larger series which seems to be new, namely a quasi-Hamiltonian action
of $K=\SP(2n)$ on the quaternionic Grassmannians
${\bf Gr}_k({\mathbb H}^{n+1})$ (item $(\Cc\Cc)$ for the root system
$\sC_n^{(1)}$ in \cref{ListHom}).

It should be mentioned that there is related work by Lê \cite{Le98} on
more qualitative aspects of Hamiltonian manifolds of cohomogeneity
one.

\begin{rem}
Part of this paper is based on part of the second author's doctoral
thesis \cite{Pau} which was written under the supervision of the first
named author.
\end{rem} {\bf Acknowledgment:} We would like to thank Guido Pezzini,
Bart Van Steirteghem and Wolfgang Ruppert for many explanations,
discussions and remarks. Last not least, we would like to thank the
referee for many suggestions to improve the paper.

\section{Hamiltonian and quasi-Hamiltonian manifolds}
\label{arsc}
We first recall the most important properties of Hamiltonian and
quasi-Hamiltonian manifolds.
In the entire paper, $K$ will be a compact connected Lie group with
Lie algebra $\kk$. A \emph{Hamiltonian $K$-manifold} is a triple
$(M,w,m)$ where $M$ is a $K$-manifold, $w$ is a $K$-invariant
symplectic form on $M$ and $m:M\to\kk^*$ is a smooth $K$-equivariant
map (the \emph{momentum map}) such that
\[\label{eq:mm}
  w(\xi x,\eta)=\<\xi,m_*(\eta)\> , \quad\text{for all }\xi\in
  \mathfrak{k},\ x\in M,\ \eta\in T_xM.
\]
In \cite{AMM98}, Alekseev, Malkin and Meinrenken studied this concept
in the context of loop groups. Even though these Loop groups are infinite
dimensional, the authors managed to reduce Hamiltonian loop group action to a
finite dimensional concept namely \emph{quasi-Hamiltonian
  manifolds}. These are very similar to Hamiltonian manifolds.
  
More precisely, quasi-Hamiltonian manifolds are also triples
$(M,w,m)$ where $M$ is a $K$-manifold, $w$ is a $K$-invariant
$2$-form and $m$ is a $K$-equivariant map. But there are
differences.

First of all, the Lie algebra $\kk$ has to be equipped with a
$K$-invariant scalar product. Moreover, a twist $\tau\in\Aut K$ has to
be chosen\footnote{The original paper \cite{AMM98} deals only with the
  untwisted case $\tau={\rm id}_K$. The straightforward adaption to
  the twisted case has been carried out independently in
  \cite{BoYa15}, \cite{Kno14}, and \cite{Mei15}.} (which may be the
identity). The momentum map $m$ has values in $K$ instead of $\kk^*$ and
is equivariant with respect to the $\tau$-twisted conjugation action
on $K$, i.e., $g*h:=gh\tau(g)^{-1}$. Finally, the closedness and
non-degeneracy of $w$ as well as formula \eqref{eq:mm} have to be
adapted. For the details one can consult the papers \cite{AMM98},
\cite{Kno14}, or \cite{Mei15}. They are not relevant for the present
paper.

In the following, we want to treat the Hamiltonian case and the
quasi-Hamiltonian on the same footing. So we talk about
$U$-Hamiltonian manifolds where $U=\kk^*$ in the Hamiltonian and $U=K$
in the quasi-Hamiltonian case.

This momentum map $m:M\to U$ gives rise to a map between orbits spaces:
\[
  m/K:M/K\to U/K.
\]
By definition, the fibers of this map are the {\em symplectic
  reductions} of $M$. The smooth ones are symplectic manifolds in a
natural way. In particular, they are even dimensional. Most important
for us are those manifolds for which this dimension is as low as
possible, namely $0$. These manifolds are called {\em multiplicity
  free}.
  
An important invariant of $M$ is its {\em momentum image}
$(m/K)(M/K)\subseteq U/K$. Its dimension is called the {\em rank of
  $M$}. Multiplicity free manifolds of rank zero are simply the
$K$-orbits in $U$. In this paper we study the next more difficult case
namely multiplicity free manifolds of rank one.
These two conditions can be combined into one. For this recall that
the dimension of $M/K$ is the {\em cohomogeneity of $M$}. Then we
have:
\begin{lem} For a $U$-Hamiltonian manifold $M$ the following are
  equivalent:
  \begin{enumerate}
  \item The cohomogeneity of $M$ is $1$.
  \item $M$ is multiplicity free of rank one.
  \end{enumerate}
\end{lem}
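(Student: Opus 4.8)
The plan is to unwind both conditions in terms of the dimension of the generic symplectic reduction and the dimension of the momentum image, and show that the single number $\dim M/K$ controls them. Write $m/K:M/K\to U/K$ for the induced map on orbit spaces, let $F$ denote a generic fiber (a symplectic reduction over a generic point of the momentum image), and recall that $\dim M/K = \dim F + \dim(m/K)(M/K) = \dim F + \rk M$. Here $\dim F$ is even, since each smooth symplectic reduction is a symplectic manifold, and this is exactly the point recorded earlier in the excerpt. Both quantities $\dim F$ and $\rk M$ are non-negative.

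First I would prove \emph{(2) $\Rightarrow$ (1)}. If $M$ is multiplicity free, then by definition the generic symplectic reduction has dimension $0$, i.e.\ $\dim F = 0$; if moreover $\rk M = 1$, then $\dim M/K = 0 + 1 = 1$, so the cohomogeneity of $M$ is $1$. This direction is essentially a matter of substituting into the displayed identity and uses nothing beyond the definitions quoted in the text.

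For \emph{(1) $\Rightarrow$ (2)}, suppose $\dim M/K = 1$. From $\dim M/K = \dim F + \rk M$ with both summands non-negative and $\dim F$ even, the only possibility is $\dim F = 0$ and $\rk M = 1$: indeed $\dim F \le 1$ forces $\dim F = 0$ (being even), and then $\rk M = 1$. Hence $M$ is multiplicity free of rank one. The one point that needs a word of care here is that $\rk M = 0$ is genuinely excluded: if $\rk M = 0$ then the momentum image is a single $K$-orbit in $U$, $M$ itself is a single $K$-orbit (a conjugacy class in the quasi-Hamiltonian case, a coadjoint orbit in the Hamiltonian case), and $\dim M/K = 0 \ne 1$; so the case distinction is forced.

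The main obstacle, such as it is, is purely bookkeeping: one must make sure the identity $\dim M/K = \dim F + \rk M$ is interpreted correctly over the generic locus of the momentum map — that is, that a generic fiber of $m/K$ really is a smooth symplectic reduction of the stated dimension, so that "even-dimensional" applies. This is standard for both Hamiltonian and (by the local structure theory of \cite{AMM98}, quoted in the text as "a quasi-Hamiltonian manifold can be locally described by Hamiltonian manifolds") quasi-Hamiltonian manifolds, and requires no new argument here.
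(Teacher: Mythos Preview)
Your proposal is correct and follows essentially the same approach as the paper: both use the identity $\dim M/K = \dim F + \rk M$ (the paper writes $2c$ for your $\dim F$) together with the evenness of the generic fiber dimension to conclude that $\dim M/K=1$ forces $\dim F=0$ and $\rk M=1$, and conversely. Your separate discussion excluding $\rk M=0$ is not needed, since the parity argument already pins down both summands uniquely.
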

\begin{proof}
  Let $c:=\2\dim\left( (m/K)^{-1}(a)\right)$ where $a$ is a generic
  point of the momentum image. As mentioned above, it is an
  integer. By definition, $c=0$ is equivalent to multiplicity
  freeness. Let $r$ be the rank of $M$. Then we have
  \[
    \dim M/K=2c+r.
  \]
  Hence, $\dim M/K=1$ if and only if $c=0$ and $r=1$.
\end{proof}

\section{Affine root systems}
Before we go on with explaining the general structure of $U$-Hamiltonian
manifolds we need to set up notation for finite and affine root
systems. Here, we largely follow the exposition in \cite{Kno14} which
is in turn based on \cite{Mac72} and \cite{Mac03}.

Let $\overline\aaa$ be a Euclidean vector space, i.e., a finite
dimensional real vector space equipped with a scalar product
$\< \cdot,\cdot \>$. Let $\aaa$ be an affine space for
$\overline\aaa$, i.e., $\aaa$ is equipped with a free and transitive
$\overline\aaa$-action. We denote the set of affine linear functions
on $\aaa$ by $A(\aaa)$. The gradient of a function $\a\in A(\aaa)$ is
the element $\aa\in\overline{\aaa}$ with
\[\label{afflin}
  \a(X+t)=\a(X)+\< \aa, t\>,\quad X\in \aaa,\ t\in \overline{\aaa}.
\]
A reflection $s$ is an isometry of $\aaa$ whose fixed
point set is an affine hyperplane. If that hyperplane is the zero-set
of $\alpha\in A(\aaa)$ then one can express $s=s_\a$ as
$s_\a(X)=X-\a(X)\aa^\vee$ with the usual convention
$\aa^\vee=\frac{2\aa}{||\aa||^2}$.
\begin{defn} 
  An \emph{affine root system} on $\aaa$ is a subset
  $\Phi\subset A(\aaa)$ such that:
  \begin{enumerate}
  \item $\R1\cap \Phi=\leer$ (in particular $0\not\in\Phi$),
  \item $s_\a(\Phi)=\Phi$ for all $\a\in\Phi$,
  \item $\< \overline \beta, \overline \alpha ^\vee \> \in \Z$ for all
    $\a,\beta \in \Phi$,
  \item the Weyl Group $W=\< s_\a,\a\in\Phi\>$ acts
    properly discontinuously on $\aaa$,
  \item $\R\a\cap\Phi=\{+\a, -\a\}$ for all $\a\in\Phi$.
  \end{enumerate}
\end{defn}
Observe that, with our definition, $\Phi$ might be finite or even
empty. In that case, the roots have a common zero which we can use as
a base point. This way, we can identify $\aaa$ with $\overline\aaa$
and we have $\a(X)=\<\aa,X\>$ for all roots $\a$.
If $(\aaa_1,\Phi_1),\ldots,(\aaa_s,\Phi_s)$ are affine root systems
then
\[
  (\aaa_1,\Phi_1)\times\ldots\times(\aaa_s,\Phi_s)
  :=(\aaa_1\times\ldots\times\aaa_s,p_1^*\Phi_1\cup\ldots\cup p_s^*\Phi_s)
\]
is also one (where the $p_i$ are the projections). Conversely, every
affine root system admits such a decomposition such that the Weyl
group $W_i$ of $\Phi_i$ is either trivial or acts irreducibly on
$\overline\aaa_i$. We say that $\Phi$ is {\em properly affine} if each
irreducible factor $\Phi_i$ is infinite.
 
A chamber of $\Phi$ is a connected component of
$\aaa\setminus\bigcup_{\a\in\Phi}\{\a=0\}$. The closure $\A$ of a
chamber is called an {\em alcove}. If $\Phi$ is finite then $\A$ is
called a {\em Weyl chamber}. If $\Phi$ is irreducible then $\A$ is
either a simplicial cone if $\Phi$ is finite or a simplex if $\Phi$ is
properly affine.

A root $\a\in\Phi$ is called {\em simple} with respect to an alcove
$\A$ if $\A\cap\{\a=0\}$ is a wall of $\A$. The set of simple roots
(for a fixed alcove) will be denoted by $S$.

Put $\overline\Phi:=\{\aa\mid\a\in\Phi\}$ and
$\overline\Phi^\vee:=\{\aa^\vee\mid\a\in\Phi\}$. These are possibly
non-reduced finite root systems on $\overline\aaa$. We define:
\begin{defn}An {\em integral root system} on $\aaa$ is a pair
  $(\Phi, \Xi)$ where $\Phi\subset A(\aaa)$ is an affine root system
  and $\Xi\subseteq \overline\aaa$ is a lattice with
  $\overline\Phi\subseteq \Xi$ and
  $\<\Xi,\overline\Phi^\vee\>\subseteq\Z$. The integral root system is
  {\em simply connected} if
  $\Xi=\{\w\in\aaa\mid \<\w,\overline\Phi^\vee\>\subseteq\Z\}$.
\end{defn}
The classification of irreducible (infinite) affine root systems as it
can be found, e.g., in \cite{Kac90} is recalled in
\cref{table:Dynkin}. In that table, also the \emph{Dynkin label}
$k(\a)$ of each $\a\in S$ is given. These labels are uniquely
characterized by being integral, coprime, and having the property that
\[\label{eq:d}
  \d:=\sum_{\a\in S}k(\a)\a
\]
is a positive constant function.

\section{Classification of multiplicity free Hamiltonian and
  quasi-Hamiltonian manifolds}
We summarize some known facts about the quotient $U/K$.

If $U=\kk^*$ then it is classical that $U/K$ is parameterized by a
Weyl chamber for the finite root system attached to $K$.

If $U=K$ {\em we need to assume that $K$ is simply connected which we
  do from now on}.
  
Then $U/K$ is in bijection with the alcove $\A$ for a properly affine
root system which is determined by $K$ and the action of $\tau$ in the
Dynkin diagram of $K$, cf. \cite{MW04} for details. Recall that in
this case, $\kk$ is equipped with a scalar product. We use it to
identify $\kk$ with $\kk^*$. Thereby, we obtain a map
\[
  \psi:\kk^*=\kk\overset\exp\to K=U.
\]
In the Hamiltonian case, we put for compatibility reasons
$\psi={\rm id}_{\kk^*}$. Likewise, we assume that a scalar product has
been selected on $\kk$ even though the results will not depend on it.
\begin{thm} Let $K, U$ be as above. Then there is a subspace
  $\aaa\subseteq\kk^*$ and an integral root system $(\Phi,\Xi)$ on
  $\aaa$ such that:
  \begin{enumerate}
  \item If $\A\subseteq\aaa$ is any alcove of $\Phi$, then the map
    $\psi/K:\A\to U/K$ is a homeomorphism.
  \item If $X\in\A$ and $a:=\psi(X)\in U$, then the isotropy group
    \[
      K_a=\{k\in K\mid k\cdot a=a\}
    \]
    is connected, $\aaa\subseteq\kk_a$ is a Cartan subalgebra, the
    weight lattice of $K_a$ is $\Xi$, and
    \[\label{eq:SX}
      S(X):=\{\a\in S\mid \a(X)=0\},
    \]
    is a set of simple roots of $K_a$. Here $S\subset\Phi$ is the set
    of simple roots with respect to $\A$.
  \end{enumerate}
\end{thm}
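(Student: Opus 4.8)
The plan is to treat the two cases ($U=\kk^*$ and $U=K$) in parallel as much as possible, and in both cases to produce $\aaa$, $\Phi$ and $\Xi$ concretely from the structure theory of $K$ (resp.\ of the twisted loop group / twisted conjugation action).

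\textbf{The Hamiltonian case.} Here $U=\kk^*\cong\kk$. Fix a maximal torus $T\subseteq K$ with Lie algebra $\mathfrak t$, and let $\aaa:=\mathfrak t^*\subseteq\kk^*$. Let $\Phi$ be the (finite) root system of $(\kk,\mathfrak t)$ viewed inside $A(\aaa)$ via $\a(X)=\<\aa,X\>$ (so every root has $0$ in its zero set, and $\aaa$ is identified with $\overline\aaa$), and let $\Xi$ be the weight lattice of $K$. That $(\Phi,\Xi)$ is an integral root system is immediate from the definitions. For (1): the quotient $\kk^*/K$ is parameterized by a closed Weyl chamber $\A$ for $\Phi$, and since all Weyl chambers are $W$-conjugate, the restriction of the projection $\kk^*\to\kk^*/K$ to any alcove (= closed Weyl chamber) $\A$ is a homeomorphism — this is the classical fact recalled just before the theorem, applied with $\psi=\mathrm{id}$. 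For (2): take $X\in\A$ and $a=X\in\kk^*$. The centralizer $K_a=K_X$ is the centralizer of a point of $\mathfrak t^*$, hence is connected (centralizers of tori in compact connected groups are connected, and $K_X=K_{\mathfrak t_X'}$ for the subtorus it generates), contains $T$, so $\mathfrak t$ (hence $\aaa$) is a Cartan subalgebra of $\kk_a$; its root system is exactly $\{\a\in\Phi\mid\a(X)=0\}$, and for $X$ in the closed chamber $\A$ the subset $S(X)=\{\a\in S\mid\a(X)=0\}$ of the simple roots $S$ of $\A$ is a set of simple roots of that subsystem (a standard parabolic-type statement). The weight lattice of $K_a$ is the same $\Xi$ because $K_a$ has the same maximal torus $T$ as $K$ and the weight lattice depends only on $T$.

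\textbf{The quasi-Hamiltonian case.} Now $K$ is simply connected and $U=K$ with $\tau$-twisted conjugation. The parameterization $U/K\cong\A$ by an alcove of a \emph{properly affine} root system $\Phi$ is the statement recalled from \cite{MW04}; concretely $\Phi$ is the affine root system whose Dynkin diagram is obtained from that of $K$ by folding along the automorphism induced by $\tau$, realized on the affine space $\aaa=\mathfrak t^{*,\tau}$ (the $\tau$-fixed subspace of a $\tau$-stable Cartan). Take $\Xi$ to be the simply connected lattice $\{\w\mid\<\w,\overline\Phi^\vee\>\subseteq\Z\}$ (this is where simple connectedness of $K$ enters). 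Statement (1) is then exactly the cited homeomorphism $\psi/K:\A\to K/K$. For (2), given $X\in\A$ and $a=\exp(X)$, one identifies $K_a$ — the $\tau$-twisted centralizer of $\exp X$ — with the centralizer of $X$ under the affine Weyl group action; by the standard theory of the alcove (see e.g.\ \cite{MW04}, or the analysis underlying Borel–de~Siebenthal) this is the connected subgroup whose Cartan is $\aaa$ and whose simple roots are the $\a\in S$ with $\a(X)=0$ — \emph{including} the affine simple root when $X$ lies on the corresponding wall. Connectedness is the substantive input here and follows from simple connectedness of $K$ (centralizers of elements in simply connected compact groups are connected), and the weight lattice of $K_a$ is $\Xi$ by the simply-connectedness part of the definition together with the fact that $K_a$ is generated by $T^\tau$ and the root subgroups for $S(X)$.

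\textbf{Main obstacle.} The delicate point is part (2) in the quasi-Hamiltonian case: correctly identifying the twisted isotropy group $K_a=K_{\exp X}$ with the subgroup attached to $S(X)$ \emph{uniformly}, so that an affine simple root genuinely contributes a root of $K_a$ when $X$ sits on its wall, and checking that this $K_a$ is connected with weight lattice exactly $\Xi$. The clean way to do this is to unwind the definition of $\Phi$ and the alcove $\A$ via \cite{MW04}/\cite{Kno14} and read off $K_a$ and its root datum directly from the wall conditions $\a(X)=0$; the finite case is then recovered as the degenerate case where $\Phi$ has a common zero. I would state (2) once and run the argument in the properly affine setting, noting that it specializes to the finite setting verbatim.
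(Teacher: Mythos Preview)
The paper does not actually give a proof of this theorem: it is stated as a summary of known structural facts, with the Hamiltonian case declared ``classical'' and the quasi-Hamiltonian case deferred to \cite{MW04} (and implicitly \cite{Kno14}). There is no proof environment following the statement; the text moves directly on to use the result. So there is nothing to compare your argument against in the paper itself.

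Your sketch is a reasonable outline of how one would assemble a proof from the cited sources, and you correctly identify the substantive point as the twisted case of part~(2). One caution: you justify connectedness of $K_a$ by invoking that ``centralizers of elements in simply connected compact groups are connected,'' but in the quasi-Hamiltonian setting $K_a$ is a \emph{twisted} centralizer $\{k\mid k\,a\,\tau(k)^{-1}=a\}$, not an ordinary one, so Borel's theorem does not apply directly. The correct argument (as in \cite{MW04} or \cite{Kno14}) passes through the identification of twisted conjugacy classes with ordinary conjugacy classes in a disconnected extension $K\rtimes\langle\tau\rangle$, or equivalently through the affine Weyl group action; your remark that one should ``unwind the definition of $\Phi$ and the alcove $\A$ via \cite{MW04}/\cite{Kno14}'' is exactly where this gets handled, but as written the sentence about centralizers in simply connected groups is misleading.
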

Since $K_a$ depends only on $S(X)\subseteq S$ we also write
$K_a=K_{S(X)}$.
Let $M$ be a compact, connected $U$-Hamiltonian manifold. Then
the \emph{invariant momentum map} is the composition
\[
  m_+:M\overset m\to U\to U/K\overset\sim\to\A\subseteq\aaa.
\]
Its image $\P_M:=m_+(M)\subseteq\A$ can be shown to be a convex
polytope \cite{Kir84}, the so-called {\em momentum polytope} of $M$. It is the
first main invariant of $M$.

A second invariant comes from the facts that for generic $a\in\P_M$ the
isotropy group $K_a$ acts on the momentum fiber $m_{+}^{-1}(a)$ via a
quotient $A_M$ of $K_a$ which is a torus independent of $a$. Its
character group $\Gamma_M$ is a subgroup of the weight lattice $\Xi$.
\begin{thm}[\cite{Kno11},\cite{Kno14}] Let $M_1$ and $M_2$ be two
  compact, connected multiplicity free $U$-Hamiltonian manifolds with
  $\P_{M_1}=\P_{M_2}$ and $\Gamma_{M_1}=\Gamma_{M_2}$. Then $M_1$ and  $M_2$ are isomorphic as $U$-Hamiltonian manifolds.
\end{thm}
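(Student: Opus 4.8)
The plan is to show that $M_1$ and $M_2$ are isomorphic locally over the common momentum polytope $\P:=\P_{M_1}=\P_{M_2}$, and then to glue the local isomorphisms by a sheaf‑cohomology vanishing argument on $\P$. Since $\G:=\G_{M_1}=\G_{M_2}$, the principal torus is one and the same torus $A$ for both manifolds. For an open $V\subseteq\A$ put $M_i^V:=m_{+,i}^{-1}(V)$, an open $U$‑Hamiltonian submanifold of $M_i$, let $\mathcal I(V)$ be the set of $U$‑Hamiltonian isomorphisms $M_1^V\overset\sim\to M_2^V$ (these automatically intertwine the momentum maps, hence commute with $m_+$), and let $\mathcal G(V):=\Aut(M_1^V)$ be the group of such self‑isomorphisms of $M_1^V$. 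These assemble into a sheaf of sets $\mathcal I$ and a sheaf of groups $\mathcal G$ on $\A$, and $\mathcal I$ is a pseudo‑torsor under $\mathcal G$. It therefore suffices to prove: (a) $\mathcal I$ is non‑empty near every point of $\P$; and (b) $H^1(\P,\mathcal G)=0$. Granting both, $\mathcal I|_\P$ is a trivial $\mathcal G|_\P$‑torsor on the (compact, contractible) polytope $\P$, and a global section of it is the desired isomorphism $M_1\overset\sim\to M_2$.

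For (a) I would invoke the symplectic local normal form near a momentum fiber $m_i^{-1}(a)$, $a\in\P$. In the Hamiltonian case this is the Guillemin--Sternberg--Marle slice model; in the quasi‑Hamiltonian case one first uses the Alekseev--Malkin--Meinrenken local reduction to replace a $K$‑invariant neighbourhood of $m_i^{-1}(a)$ by an ordinary Hamiltonian $K_a$‑manifold, $K_a=K_{S(X)}$ for $a=\psi(X)$. Multiplicity freeness then forces the transverse slice to be modelled on a smooth affine spherical $K_a$‑variety whose weight monoid is read off from the germ of $\P$ at $a$ together with $\G$. By the uniqueness part of the Knop--Van Steirteghem classification \cite{KVS05} of such varieties — together with the fact that the residual covering / central‑torus / $\C^\times$‑fibration ambiguities are pinned down once one insists on a \emph{compact} $M$ realizing exactly $(\P,\G)$ — the local models for $M_1$ and $M_2$ at $a$ coincide, so $M_1^V\cong M_2^V$ for a suitable neighbourhood $V$ of $a$, whether $a$ is interior to $\P$ or on its boundary.

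The substantive step is (b). One first identifies $\mathcal G$: a $U$‑Hamiltonian automorphism of $M_1^V$ acts fiberwise over $V\cap\P$ through the torus $A$, and since the $K$‑invariant smooth functions on $M_1^V$ are exactly the pullbacks of smooth functions on $V\cap\P$, the sheaf $\mathcal G$ is identified with the sheaf $\underline{C^\infty}(\,\cdot\,,A)$ on $\P$, possibly twisted by a locally constant sheaf with fiber in $A$ encoding how the embedding $A\into K_a$ varies. From the exponential sequence $0\to\underline{\G^\vee}\to\underline{C^\infty}(\,\cdot\,,\operatorname{Lie}A)\to\underline{C^\infty}(\,\cdot\,,A)\to0$ one gets $H^1(\P,\mathcal G)$ trapped between $H^1\big(\P,\underline{C^\infty}(\,\cdot\,,\operatorname{Lie}A)\big)=0$ (a fine sheaf, hence acyclic on the paracompact $\P$) and $H^2(\P,\underline{\G^\vee})=0$ (as $\P$ is contractible), forcing $H^1(\P,\mathcal G)=0$. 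The real obstacle, as I see it, is making this description of $\mathcal G$ uniform across the strata of $\P$, where $K_a$ — and with it the embedding $A\into K_a$ governing the twist — jumps; and, relatedly, controlling the discrete $H^2(\P,\underline{\G^\vee})$ contribution, which is exactly the point where the contractibility of the polytope $\P$ must be used. Combining (a) and (b) yields the global $U$‑Hamiltonian isomorphism $M_1\overset\sim\to M_2$.
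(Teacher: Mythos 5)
This theorem is not proved in the paper at all: it is imported from \cite{Kno11} and \cite{Kno14}, and your outline is in fact a faithful reconstruction of the strategy of those papers (local normal form plus rigidity of the local models, then gluing via vanishing of $H^1$ of the automorphism sheaf, the latter computed as a sheaf of torus-valued smooth functions and killed by the exponential sequence together with convexity of $\P$). So the architecture is right. But as a proof it has two genuine gaps, one of which is a step that would fail as written.

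First, in step (a) you derive local uniqueness from ``the uniqueness part of the Knop--Van Steirteghem classification \cite{KVS05}.'' That classification is only up to coverings, central tori and $\C^\times$-fibrations (the present paper says so explicitly), so it does not by itself determine the local model from its weight monoid; and your patch --- that these ambiguities are ``pinned down once one insists on a compact $M$'' --- does not work, because the local model $Z$ is a non-compact affine $K_a^\C$-variety and must be determined by the monoid $T_X\P\cap\G$ alone, before any globalization. The correct tool is Losev's theorem \cite{Los06} (the ``Knop conjecture''), which the paper itself quotes for exactly this purpose: a smooth affine spherical variety is uniquely determined by its weight monoid. Without Losev's theorem the local step, and hence Delzant-type uniqueness, is not available. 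Second, in step (b) the identification of $\mathcal G$ with (a twist of) $\underline{C^\infty}(\cdot,A)$ --- uniformly over the boundary strata of $\P$, where $K_a$ jumps --- is not a technical nuisance to be dispatched in passing: it is the main theorem of \cite{Kno11} and accounts for essentially all of the work. You correctly flag it as ``the real obstacle,'' but you leave it entirely open, so what you have is a correct plan with the two load-bearing steps unproved rather than a proof.
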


This begs the question which pairs $(\P,\Gamma)$ arise this
way. The key to the answer lies in the paper \cite{Bri87} of Brion
which connects the theory of multiplicity free Hamiltonian manifolds
with the theory of complex spherical varieties. In the following we
summarize only a simplified version which suffices for our purposes.

We start with a connected, reductive, \emph{complex} group $G$. An
irreducible algebraic $G$-variety $Z$ is called \emph{spherical} if a
Borel subgroup of $G$ has an open orbit. Now assume also that $Z$ is
affine and let $\C[Z]$ be its ring of regular functions. Then the
Vinberg-Kimelfeld criterion \cite{VK78} asserts that $Z$ is spherical
if and only if $\C[Z]$ is multiplicity free as a $G$-module. This
means that there is a set (actually a monoid) $\Lambda_Z$ of dominant
integral weights of $G$ such that
\[\label{eq:CZ}
  \C[Z]\cong\bigoplus_{\chi\in\Lambda_Z}V_\chi
\]
where $V_\chi$ is the simple $G$-module of highest weight $\chi$. A
theorem of Losev \cite{Los06} asserts that in case $Z$ is smooth, the
variety $Z$ is in fact uniquely determined by its weight monoid
$\Lambda_Z$.

Let $K\subseteq G$ be a maximal compact subgroup. Then any smooth
affine $G$-variety can be equipped with the structure of a Hamiltonian
$K$-manifold by embedding $Z$ into a finite dimensional $L$-module $V$
and using a $K$-invariant Hermitian scalar product on $V$ to define a
momentum map. Then

\begin{enumerate}
\item $Z$ is spherical as a $G$-variety if and only if it is
  multiplicity free as a Hamiltonian $K$-manifold.
\item $\P_Z=\R_{\ge0}\Lambda_Z$ (the convex cone generated by $\Lambda_Z$).
\item $\Gamma_Z=\Z\Lambda_Z$ (the group generated by $\Lambda_Z$).
\end{enumerate}
The first two items were proved by Brion \cite{Bri87} in the context
of projective varieties. The version which we need, namely for affine
varieties, was proved by Sjamaar in \cite{Sja98}. For the last item
see Losev \cite[Prop.~8.6(3)]{Los06}.
\begin{rem}
It follows from the normality of $Z$ that conversely
\[\label{eq:cone}
\Lambda_Z=\P_Z\cap\G_Z.
\]
So $\Lambda_Z$ and the pair $(\P_Z,\G_Z)$ carry the same information.
\end{rem}
\begin{defn}
  A pair $(\P,\Gamma)$ is called \emph{$G$-spherical} if there exists a
  smooth affine spherical $G$-variety $Z$ with
  $\P=\R_{\ge0}\Lambda_Z$ and $\Gamma=\Z\Lambda_Z$. The (unique) smooth
  variety $Z$ will be called a \emph{model for $(\P,\G)$}.
\end{defn}
Now we go back to $U$-Hamiltonian manifolds. For any subset
$\P\subseteq\A$ and point $X\in\P$ we define the {\em tangent cone} of
$\P$ at $X$ as
\[
  T_X\P:=\R_{\ge0}(\P-X).
\]
Here is a local version of sphericality:
\begin{defn}
  Let $\P\subseteq\A$ be a compact convex polytope and
  $\Gamma\subseteq\Xi$ a subgroup.
  \begin{enumerate}
  \item $(\P,\Gamma)$ is \emph{spherical in $X\in\P$} if
    $(T_X\P,\Gamma)$ is $L$-spherical where $L:=K_a^\C$ is the Levi subgroup corresponding to
    $a:=\psi(X)\in U$. The model variety for $(T_X\P,\Gamma)$ will be
    called the \emph{local model of $(\P,\G)$ in $X$}.
  \item The pair $(\P,\G)$ is {\em locally spherical} if it is
    spherical in every vertex of $\P$.
  \end{enumerate}
\end{defn}
\begin{rem}\label{rem:parallel}
  It follows from the definition of $G$-sphericality that in
  a locally spherical pair $\P$ and $\G$ are necessarily parallel
  in the sense that $\P$ is a polytope of maximal dimension
  inside the affine subspace $X+\<\G\>_\R\subseteq\aaa$ for any
  $X\in\P$.
\end{rem}
The classification theorem can now be stated as follows:
\begin{thm}[\cite{Kno11},\cite{Kno14}]\label{thm:Delzant}
  Let $K$ be a connected compact Lie group which is assumed to be
  simply connected in the quasi-Hamiltonian case.  Then the map
  $M\mapsto(\P_M,\G_M)$ induces a bijection between
  \begin{enumerate}
  \item isomorphism classes of compact, connected multiplicity free
    $U$-Hamiltonian manifolds and
  \item locally spherical pairs $(\P,\Gamma)$ where $\P\subseteq\A$ is
    a compact convex polyhedron and $\Gamma\subseteq\Xi$ is a subgroup.
  \end{enumerate}
\end{thm}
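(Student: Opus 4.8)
The plan is to prove the two assertions of \cref{thm:Delzant} --- that the map $M\mapsto(\P_M,\G_M)$ is well-defined with locally spherical image, and that it is a bijection onto the set of locally spherical pairs --- by reducing the global statement to the already-understood local picture. The uniqueness half is essentially free: by the preceding theorem (the one attributed to \cite{Kno11},\cite{Kno14} stating that $M$ is determined up to isomorphism by the pair $(\P_M,\G_M)$), the map is injective, so the entire content is (i) that every $M$ yields a \emph{locally spherical} pair, and (ii) that every locally spherical pair is realized.

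For step (i), I would argue that sphericality of $(\P_M,\G_M)$ at a vertex $X$ of $\P_M$ is a purely local condition near the momentum fiber $m_+^{-1}(X)$. Fix $X\in\P_M$ a vertex and $a=\psi(X)\in U$ with Levi $L=K_a^\C$. The local normal form for ($U$-)Hamiltonian manifolds (the statement in the introduction that a quasi-Hamiltonian manifold is locally modelled on Hamiltonian ones, together with the symplectic cross-section / slice theorem) produces, in a $K$-invariant neighborhood of the orbit through a point of $m_+^{-1}(X)$, a Hamiltonian $L$-model whose momentum polytope germ is exactly $T_X\P_M$ and whose lattice is $\G_M$. Multiplicity freeness of $M$ is inherited by this local model, so by Brion--Sjamaar (items (1)--(3) relating multiplicity free Hamiltonian $K$-manifolds to smooth affine spherical $G$-varieties, applied to $L\subset G=L^\C$) the germ $(T_X\P_M,\G_M)$ comes from a smooth affine spherical $L$-variety, i.e.\ is $L$-spherical. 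Since this holds at every vertex, $(\P_M,\G_M)$ is locally spherical; the parallelism of \cref{rem:parallel} then also holds automatically.

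For step (ii) --- the existence/surjectivity half, which I expect to be the main obstacle --- I would start from a locally spherical pair $(\P,\G)$ and build $M$ by gluing local models. At each vertex $X$ of $\P$ one has, by hypothesis and \cite{Los06}, a unique smooth affine spherical $L_X$-variety $Z_X$ (the local model), which via a maximal compact $K_a\subset L_X$ carries a multiplicity free Hamiltonian structure with momentum germ $(T_X\P,\G)$. Near a non-vertex face the local model is obtained similarly, and on overlaps the models are forced to agree because a multiplicity free local model is uniquely determined by its (germ of) momentum data --- this is again the uniqueness input. One then has to check that these local pieces glue to a genuine global compact $U$-Hamiltonian manifold: that the $2$-form and momentum map patch consistently over $\P$, that compactness of $\P$ forces compactness of $M$, and --- in the quasi-Hamiltonian case --- that the group-valued momentum map and the twisted conjugation equivariance are respected, which is exactly where the simple-connectedness hypothesis on $K$ enters (it guarantees $U/K\cong\A$ and that the relevant isotropy groups $K_a=K_{S(X)}$ are connected, so that the Levi data used to label the local models is unambiguous). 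This gluing is the technical heart; everything else is bookkeeping reducing the global claim to the local spherical-variety dictionary and to the uniqueness theorem already stated.

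Thus the structure is: uniqueness from the prior theorem; well-definedness and local sphericality of $(\P_M,\G_M)$ from the local normal form plus Brion--Sjamaar; and realizability by gluing vertex-local models $Z_X$, with the compatibility of the gluing --- particularly in the quasi-Hamiltonian, group-valued setting --- being the step that requires the most care.
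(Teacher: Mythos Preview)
The paper does not give a proof of this theorem at all: it is stated as a citation result, attributed to \cite{Kno11} and \cite{Kno14}, and is followed immediately by remarks rather than a proof. So there is no ``paper's own proof'' to compare your proposal against.

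That said, your outline is a fair summary of the strategy actually used in the cited references: injectivity is exactly the preceding uniqueness theorem; local sphericality of $(\P_M,\G_M)$ at a vertex comes from the symplectic cross-section/slice theorem producing a smooth affine spherical local model; and surjectivity is a gluing argument over the faces of $\P$. Where your sketch is thinnest is precisely where the real work lies in \cite{Kno11,Kno14}: the gluing is not just a matter of matching momentum data on overlaps, but requires showing that the sheaf of local automorphisms of the candidate pieces has vanishing higher cohomology (so that local isomorphisms can be promoted to a global one), and in the quasi-Hamiltonian case one must pass through the equivalence with Hamiltonian loop-group manifolds to even set up the local-to-global problem correctly. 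Your remark that ``uniqueness forces the overlaps to agree'' is not by itself enough: uniqueness gives you isomorphisms on overlaps, but one still has to kill the \v{C}ech obstruction to patching them coherently. If you want to turn this into an actual proof rather than a plan, that cohomological step is the one you would need to supply.
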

\begin{rems}
  \begin{enumerate}
  \item  The relation between pairs and manifolds can be made more
  precise. Let $M$ be a $U$-Hamiltonian manifold and $X\in \P_M$. Then
  there exists a neighborhood $\P_0$ of $X$ in $\P$ such that
\[
M_0\cong K\times^{K_a}Z_0
\]
where $M_0=m_+^{-1}(\P_0)$, $a=\psi(X)$, and $Z_0\subseteq Z$ is a
$K_a$-stable open subset of the local model $Z$ in $X$.

\item The construction of locally spherical pairs is quite
difficult. Already deciding whether a given pair is locally spherical
is intricate. There is an algorithm due to Pezzini-Van Steirteghem
\cite{PVS19} for this but we are not going to use it since in our
setting it is not necessary.
\end{enumerate}
\end{rems}

Because of this theorem, we are going to work from now on exclusively
on the \l/combinatorial side\r/, i.e., with locally spherical
pairs. We start with two reduction steps.

\begin{defn}
Let
\[
  S(X):=\{\a\in S\mid\a(X)=0\text{ for $X\in\P$}\}
\]
be the set of simple roots which are zero for a fixed $X\in \P$ and
\[
  S(\P):=\bigcup_{X\in\P}S(X)=\{\a\in S\mid\a(X)=0\text{ for some $X\in\P$}\}.
\]
\end{defn}

Thus, elements of $S(\P)$ correspond to walls of $\A$ which contain a point
of $\P$. Let $K_0:=K_{S(\P)}$ be the corresponding (twisted) Levi
subgroup of $K$. Then it is immediate that $(\P,\G)$ is locally
spherical for $K$ if and only if it is so for $K_0$. This observation
reduces classifications largely to pairs with $S(\P)=S$.

\begin{defn}
	A polyhedron $\P \subseteq \A$ is called genuine if $S(\P)=S$.
\end{defn}

There is another reduction. Assume $S_0\subseteq S$
is a component of the Dynkin diagram of $S$. It corresponds to a
(locally) direct semisimple factor $L_{S_0}$ of $G=K^\C$. Suppose also
that $S_0\subseteq S(X)$ for all $X\in\P$. Then it follows from
\cref{rem:parallel} that $\<\G,S_0\>=0$. In turn \eqref{eq:CZ} implies
that $L_{S_0}$ will act trivially on every local model $Z$ of
$(\P,\G)$. This means that also the roots in $S_0$ can be ignored
for determining the sphericality of $(\P,\G)$.

\begin{defn}
  A genuine polyhedron $\P\subseteq\A$ is called \emph{primitive} if
  $S$ does not contain a component $S_0$ with $S_0\subseteq S(X)$ for
  all $X\in\P$.
\end{defn}

The following lemma summarizes our findings
  \begin{lem}\label{lemma:reduction}
    Let $\P\subseteq\A$ be a compact convex polyhedron and let be
    $\G\subseteq\Xi$ a subgroup. Let
\[
S^c:=\{\a\in S\mid \a(X)\ne0\text{ for all }X\in\P\}
\]
and let $S_1$ be the union of all components $C$ of $S\setminus S^c$
with $C\subseteq S(X)$ for all $X\in\P$. Let
$\overline\Xi:=\Xi\cap S_1^\perp$. Then $\P$ is primitive for
$\overline S:=S\setminus (S^c\cup S_1)$. Moreover, the pair $(\P,\G)$
is locally spherical for $(S,\Xi)$ if and only if it is so for
$(\overline S,\overline\Xi)$.
\end{lem}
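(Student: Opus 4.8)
The plan is to assemble the statement from the three elementary observations made in the text immediately before it, organised around the decomposition $S = S^c \sqcup S_1 \sqcup \overline S$. First I would dispose of $S^c$: by definition, a simple root $\a \in S^c$ has $\a(X) \ne 0$ for all $X \in \P$, so $\a \notin S(\P)$; hence $S(\P) \subseteq S \setminus S^c$, and replacing $S$ by $S \setminus S^c$ changes neither $S(\P)$ nor the relevant (twisted) Levi $K_{S(\P)}$. By the paragraph following the definition of $S(\P)$, $(\P,\G)$ is locally spherical for $K$ if and only if it is so for $K_0 = K_{S(\P)}$, so we may freely pass to the root subsystem spanned by $S \setminus S^c$.

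Next I would deal with $S_1$. Each component $C$ of $S \setminus S^c$ contained in $S_1$ satisfies $C \subseteq S(X)$ for all $X \in \P$, so by \cref{rem:parallel} we get $\<\G, C\> = 0$, i.e.\ $\G \subseteq C^\perp$; taking the union over all such $C$ gives $\G \subseteq S_1^\perp$, hence $\G \subseteq \overline\Xi := \Xi \cap S_1^\perp$, so the pair $(\P,\G)$ genuinely lives over $(\overline S, \overline\Xi)$. The argument in the text (via \eqref{eq:CZ}) shows that the semisimple factor $L_{S_1}$ acts trivially on every local model $Z$ of $(\P,\G)$, so the roots in $S_1$ can be ignored when testing sphericality: $(\P,\G)$ is locally spherical for $(S \setminus S^c, \Xi)$ iff it is locally spherical for $(\overline S, \overline\Xi)$. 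Combining with the first step yields the asserted equivalence of local sphericality for $(S,\Xi)$ and for $(\overline S, \overline\Xi)$.

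It remains to check that $\P$ is primitive for $\overline S$. By construction $S^c(\P) = \{\a \in \overline S \mid \a(X) \ne 0 \text{ for all } X \in \P\} = \leer$, since every $\a \in \overline S \subseteq S \setminus S^c$ lies in some component of $S \setminus S^c$ that meets a zero locus $\{\a' = 0\}$ with $\a' \in S(\P)$; more directly, by the very definition of $S^c$ as \emph{all} simple roots vanishing nowhere on $\P$, there is no such root left in $\overline S$, so $\P$ is genuine for $\overline S$. For primitivity one must show that no component $C'$ of $\overline S$ satisfies $C' \subseteq S(X)$ for all $X \in \P$. Here the mild subtlety is that the components of $\overline S$ need not coincide with the components $C$ of $S \setminus S^c$: removing $S_1$ can only split a component of $S \setminus S^c$ into smaller pieces. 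But if some component $C'$ of $\overline S$ had $C' \subseteq S(X)$ for all $X$, then $C'$ is contained in a component $C$ of $S \setminus S^c$, and $C'$ would be a union of connected components of $C \setminus S_1$; I would argue that then $C \subseteq S_1$ as well (since the definition of $S_1$ pulls in every component of $S \setminus S^c$ lying inside all $S(X)$, and $C'$ being so contained forces $C$ to be — here one uses that $S_1$ is defined component-by-component on $S \setminus S^c$, so $C'$ could not have been excised from $S_1$ in the first place), contradicting $C' \subseteq \overline S = S \setminus (S^c \cup S_1)$ and $C' \ne \leer$. This bookkeeping about how removing $S_1$ interacts with connected components is the only place requiring care; everything else is a direct citation of the preceding discussion.
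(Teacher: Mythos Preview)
Your proposal is correct and follows exactly the approach of the paper: the lemma is introduced there with the words ``The following lemma summarizes our findings'' and is given no separate proof, so a write-up like yours that simply collects the three preceding observations is precisely what is intended.

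One small simplification: the ``mild subtlety'' you worry about in the primitivity check does not actually arise. Since $S_1$ is by definition a union of \emph{entire} connected components of $S\setminus S^c$, removing $S_1$ cannot split any component; the components of $\overline S=(S\setminus S^c)\setminus S_1$ are exactly those components $C$ of $S\setminus S^c$ with $C\not\subseteq S_1$, i.e.\ with $C\not\subseteq S(X)$ for some $X\in\P$. This gives primitivity immediately, without the bookkeeping you sketch.
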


Now the purpose of this paper is to present a complete classification
of primitive locally spherical pairs in the special case when
$\rk M=\dim\P=1$.

In this case the following simplifications occur: the polyhedron $\P$
is a line segment $\P=[X_1,X_2]$ with $X_1, X_2\in\A$ and $\G=\Z\w$
with $\w\in\Xi$. It follows from \cref{rem:parallel} that
  \[\label{eq:XcX}
    X_2=X_1+c\w\text{ for some }c\ne0.
  \]
  By replacing $\w$ by $-\w$ if necessary, we may assume that
  $c>0$. Then \cref{thm:Delzant} boils down to:
\begin{cor}\label{cor:Kclass}
  The map $M\mapsto (\P_M,\G_M)=([X_1,X_2],\Z\w)$ induces a bijection
  between
  
  \begin{enumerate}
  \item isomorphism classes of compact, connected multiplicity free
    $U$-Hamiltonian manifolds of rank one and
  \item\label{it:Kclass2} triples $(X_1, X_2, \w)$ satisfying
    \eqref{eq:XcX} such that $\N\w$ is the weight monoid of a smooth
    affine spherical $K_{S(X_1)}^\C$-variety $Z_1$ and $\N(-\w)$ is
    the weight monoid of a smooth affine spherical
    $K_{S(X_2)}^\C$-variety $Z_2$. The triples $(X_1, X_2, \w)$ and
    $(X_2, X_1, -\w)$ are considered equal.
  \end{enumerate}
\end{cor}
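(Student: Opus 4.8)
The plan is to specialize \cref{thm:Delzant} to rank one and to unwind what \emph{locally spherical} means in that situation. First I would pin down the combinatorial data. Since $\rk M=\dim\P_M$, the manifolds of rank one correspond under \cref{thm:Delzant} exactly to the locally spherical pairs $(\P,\G)$ with $\dim\P=1$; such a $\P\subseteq\A$ is a closed segment $\P=[X_1,X_2]$ with $X_1\neq X_2$ in $\A$. By \cref{rem:parallel} the lattice $\G$ then also has rank one, so $\G=\Z\w$ for a generator $\w\in\Xi$, and $\P$ lies in the affine line $X_1+\R\w$, which is precisely \eqref{eq:XcX}: $X_2=X_1+c\w$ with $c\neq0$. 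Fixing the generator $\w$ so that $c>0$, a given pair $(\P,\G)$ is represented by exactly the two triples $(X_1,X_2,\w)$ and $(X_2,X_1,-\w)$ --- the other two combinations of endpoint order and sign of $\w$ produce $c<0$ --- so, after the stated identification of these two triples, the pairs $(\P,\G)$ with $\dim\P=1$ are in bijection with such triples.

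Next I would translate the sphericality condition at the two vertices of the segment. The vertex set of $\P=[X_1,X_2]$ is exactly $\{X_1,X_2\}$, so $(\P,\G)$ is locally spherical precisely when it is spherical in $X_1$ and in $X_2$. At $X_1$ the tangent cone is $T_{X_1}\P=\R_{\ge0}(\P-X_1)=\R_{\ge0}(X_2-X_1)=\R_{\ge0}\w$ since $c>0$, and the relevant complex Levi is $L_1:=K_{a_1}^\C=K_{S(X_1)}^\C$ with $a_1=\psi(X_1)$. Thus sphericality in $X_1$ means that $(\R_{\ge0}\w,\Z\w)$ is $L_1$-spherical, i.e.\ by definition that there is a smooth affine spherical $L_1$-variety $Z_1$ with $\R_{\ge0}\Lambda_{Z_1}=\R_{\ge0}\w$ and $\Z\Lambda_{Z_1}=\Z\w$; by \eqref{eq:cone} the weight monoid of any such $Z_1$ is forced to equal $\R_{\ge0}\w\cap\Z\w=\N\w$, and conversely any smooth affine spherical $L_1$-variety with weight monoid $\N\w$ witnesses sphericality in $X_1$. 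The vertex $X_2$ is handled identically, with $T_{X_2}\P=\R_{\ge0}(X_1-X_2)=\R_{\ge0}(-\w)$, giving the condition on $\N(-\w)$ and on $K_{S(X_2)}^\C$.

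Assembling the pieces, \cref{thm:Delzant} gives a bijection between isomorphism classes of compact connected multiplicity free $U$-Hamiltonian manifolds of rank one and locally spherical pairs $(\P,\G)$ with $\dim\P=1$, and the two steps above identify the latter set with the triples of \ref{it:Kclass2} modulo the identification $(X_1,X_2,\w)\leftrightarrow(X_2,X_1,-\w)$; composing proves the corollary. I do not expect a genuine obstacle: the argument is a direct unwinding of the definitions together with \eqref{eq:cone}. The only places that need care are the bookkeeping of the two equivalent triples (keeping straight the signs of $c$ and $\w$ against the labelling of the endpoints) and the elementary fact that a one-dimensional polytope has exactly its two endpoints as vertices.
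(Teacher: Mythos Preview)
Your proposal is correct and follows essentially the same approach as the paper: the paper does not give a separate proof of the corollary but simply states, after the paragraph recording the rank-one simplifications and \eqref{eq:XcX}, that \cref{thm:Delzant} ``boils down to'' this statement. Your write-up is just a careful unwinding of that remark, using \cref{rem:parallel} and \eqref{eq:cone} exactly as intended.
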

Triples as above will be called {\em spherical}. A triple is
\emph{genuine} or \emph{primitive} if $\P=[X_1,X_2]$ has this
property. The varieties $Z_i$ are called the {\em local models} of the
triple.
\section{The local models}
We proceed by recalling all possible local models, i.e., smooth,
affine, spherical $L$-varieties $Z$ of rank one where $L$ is a connected,reductive, complex, algebraic group. Then
\[
  \C[Z]=\bigoplus_{n\in\L}V_{n\w},
\]
where $\w$ is a non-zero integral dominant weight, $V_{n\w}$ is the
simple $L$-module of highest weight $n\w$, and $\L$ equals either $\N$ or$\Z$.
The case $\L=\Z$ is actually irrelevant for our purposes since this
case only occurs as local model of an interior point of $\P$ (by
\eqref{eq:cone}).

In case $\L=\N$, the weight $\w$ is unique.
\begin{thm}\label{thm:SASV1}
  Let $Z$ be a smooth, affine, spherical $L$-variety of rank one. Then
  one of the following cases holds:
  \begin{enumerate}
  \item\label{it:SASV1-1} $Z=\C^*$ and $L$ acts via a
    non-trivial character.
  \item\label{it:SASV1-2} $Z=L_0/H_0$ where $(L_0,H_0)$ appears in
    the first part of \cref{table:local} and $L$ acts via a surjective
    homomorphism $\phi:L\to L_0$.
  \item\label{it:SASV1-3} $Z=V_0$ where $(L_0,V_0)$ appears in the
    second part of \cref{table:local} and $L$ acts via a homomorphism
    $\phi:L\to L_0$ which is surjective modulo scalars (except for
    case $\Ca_0$ when $\phi$ should be surjective).
  \end{enumerate}
\end{thm}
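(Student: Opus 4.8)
The plan is to reduce the classification of smooth, affine, spherical $L$-varieties of rank one to the known classification of smooth affine spherical varieties in \cite{KVS05}, and then to read off the rank-one list explicitly. First I would observe that since $\C[Z]=\bigoplus_{n\in\L}V_{n\w}$ with $\L=\N$ or $\Z$, the variety $Z$ has a single "spherical weight" $\w$ up to scaling, so in the terminology of \cite{KVS05} it is a rank-one spherical variety. The structure theorem there expresses any such $Z$, up to coverings, central tori, and $\C^\times$-fibrations, in terms of a list of indecomposable building blocks. The task is to unwind those three ambiguities in the rank-one case and see that they collapse to exactly the three alternatives in the statement.

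The key steps, in order: (1) Use the fact that $Z$ is affine and smooth of rank one to write $Z$ as an $L$-variety where, after replacing $L$ by a suitable quotient, the action factors through a group $L_0$ acting essentially faithfully — this handles the "surjective homomorphism $\phi\colon L\to L_0$" clause and explains why only $L_0$ needs to be tabulated. (2) Separate the homogeneous case from the non-homogeneous case. If $Z$ is homogeneous, $Z=L_0/H_0$, and rank one forces $H_0$ to be spherical of corank one in the relevant sense; going through the list of \cite{KVS05} (or equivalently the list of spherical pairs of rank one, essentially due to Krämer/Brion for the reductive-$H$ case and the full list in \cite{KVS05}) yields exactly the homogeneous entries of \cref{table:local}, giving case \ref{it:SASV1-2}. (3) If $Z$ is not homogeneous, then since $Z$ is smooth affine spherical of rank one, the "$\C^\times$-fibration" and "central torus" operations must be used; I would argue that the only genuinely non-homogeneous smooth affine examples of rank one are either $\C^*$ itself with a character action (case \ref{it:SASV1-1}, which is the pure central-torus case $\L=\Z$) or a linear action on a vector space $V_0$ — i.e. an affine cone over a homogeneous projective spherical variety, smoothened — giving the entries $(L_0,V_0)$ in the second half of \cref{table:local} and case \ref{it:SASV1-3}. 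The parenthetical "surjective modulo scalars, except $\Ca_0$" is exactly the statement that the $\C^\times$ acting by scaling on $V_0$ may or may not already be contained in the image of $L$, with $\Ca_0$ being the degenerate case where it must be.

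The main obstacle I expect is step (3): controlling precisely which smooth affine rank-one varieties arise, because the operations in \cite{KVS05} (coverings, central tori, $\C^\times$-fibrations) can a priori be combined, and one must check that in rank one no exotic combination survives the smoothness and affineness constraints — in particular that every non-homogeneous case really is (up to the allowed quotient of $L$) a linear representation, and that the representations appearing are exactly those whose projectivization is spherical with the cone being smooth, which is a finite but somewhat delicate bookkeeping task. A secondary subtlety is bounding the ambiguity in $\phi$: one must pin down that two different choices of $\phi$ (or of the central torus) give isomorphic $L$-varieties only in the ways recorded, so that \cref{table:local} is both complete and non-redundant. Once these are settled, assembling the final list is routine: for each candidate $(L_0,H_0)$ or $(L_0,V_0)$ one computes $\C[Z]$, verifies it has the form $\bigoplus_{n\in\N}V_{n\w}$, and records $\w$.
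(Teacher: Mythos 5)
Your overall strategy --- extract the rank-one list from the classification of smooth affine spherical varieties in \cite{KVS05} --- is the route the paper explicitly mentions as possible and then deliberately avoids (``the assertion could be extracted from that paper. A much simpler argument goes as follows''). The reduction targets you name are the right ones (the Akhiezer/Brion/Wasserman list of homogeneous spherical varieties of rank one for case \ref{it:SASV1-2}, Kac's list of spherical modules for case \ref{it:SASV1-3}), and your final picture of the answer is correct. But the crucial step is exactly the one you defer to ``the main obstacle I expect'': proving that a smooth affine spherical $L$-variety of rank one is either homogeneous or a spherical module, with no exotic combination of coverings, central tori and $\C^\times$-fibrations surviving. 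You say you ``would argue'' this by unwinding the operations of \cite{KVS05}, but no argument is given, and this dichotomy \emph{is} the content of the theorem; everything else is table lookup. As stated, the proposal has a genuine gap at its center. (A smaller inaccuracy: $\C^*$ is homogeneous --- it is a torus orbit --- so filing case \ref{it:SASV1-1} under the non-homogeneous branch as a ``pure central-torus case'' misplaces it; in the paper it falls out of the homogeneous rank-one classification.)

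The paper closes this gap with a short structural argument that bypasses the KVS05 operations entirely. Luna's slice theorem (\cite[Cor.~2.2]{KVS05}) gives $Z\cong L\times^H V$ with $H\subseteq L$ reductive and $V$ an $H$-module. The base $L/H$ has rank $0$ or $1$. If the rank is $0$, then $L/H$ is projective (\cite[prop.~10.1]{Tim11}) and affine, hence a point, so $Z=V$ is a spherical module of rank one and Kac's classification \cite{Kac80} yields case \ref{it:SASV1-3}. If the rank is $1$, then $Z$ and $L/H$ have the same rank, and \cite[lem.~2.4]{Gan10} forces the generic isotropy group $F\subseteq H$ of $Z$ to have finite index in $H$; hence the fiber $V$ must be zero and $Z=L/H$ is homogeneous, giving cases \ref{it:SASV1-1} and \ref{it:SASV1-2} from Akhiezer's list (discarding the non-affine entries). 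If you want to salvage your approach you would need to supply precisely this kind of argument for your step (3); the slice-theorem decomposition plus the rank comparison is the missing idea.
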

\begin{proof}
  Smooth affine spherical varieties have been classified by Knop-Van
  Steirteghem in \cite{KVS05} and the assertion could be extracted
  from that paper. A much simpler argument goes as follows. First,
  a simple application of Luna's slice theorem (see
  \cite[Cor.~2.2]{KVS05}) yields $Z\cong L\times^HV$ where
  $H\subseteq L$ is a reductive subgroup and $V$ is a representation
  of $H$. As the homogeneous space $L/H$ is the image of
  $Z=L\times^H V$ under the projection $Z\to L/H$. The rank of the
  homogeneous space $L/H$ is at most the rank of $Z$, so either $0$ or
  $1$.
  
  If it is $0$, then $L/H$ is projective (see, e.g.,
  \cite[prop. 10.1]{Tim11}), but, being also affine, it is a single
  point, i.e., $L=H$. We deduce $Z=V$, i.e., $Z$ is a spherical module
  of rank one. The classification of spherical modules
  (Kac~\cite{Kac80}, see also \cite{Kno98}), yields the cases in
  \ref{it:SASV1-3}.
  
  Assume now that $L/H$ has rank one. This means that $Z$ and $L/H$
  have the same rank. Let $F\subseteq L$ be the stabilizer of a point
  in the open $L$-orbit of $Z$ such that $F\subseteq H$. By
  \cite[lem. 2.4]{Gan10}, the quotient $H/F$ is finite. This implies
  that the projection $Z\to H/F$ has finite fibers. Hence $V=0$ and
  $Z=L/H$ is homogeneous. The classification of homogeneous spherical
  varieties of rank one (Akhiezer~\cite{Akh83}, see also \cite{Bri89},
  and Wasserman~\cite{Was96}), yields the cases in \ref{it:SASV1-1}
  and \ref{it:SASV1-2}. Observe, that the non-affine cases of
  Akhiezer's list have been left out.
\end{proof}
\begin{rems} Some remarks concerning \cref{table:local}:
  \begin{enumerate}
    
  \item Observe that items $[\2]d_2$ and $[\2]d_3$ could be made part of
  the series $[\2]d_n$. Because of their singular behavior we chose
  not to do so. For example both can be embedded into
  $\sA_n$-diagrams. Moreover, $[\2]d_2$ are the only cases with a
  disconnected Dynkin diagram.
  
\item We encode the local models by the diagram given in the last
  column of \cref{table:local}. For homogeneous models these diagrams
  are due to Luna \cite{Lun01}. The inhomogeneous ones are specific
  to our situation.
  
\item For a homogeneous model the weight $\w$ is a fixed linear
  combination of simple roots (recorded in the fourth column). Hence
  it lifts uniquely to a weight of $L$. On the other hand, for
  inhomogeneous models the weight of $L$ is only unique up to a
  character. This is indicated by the notation $\w\sim\pi_1$ which
  means that $\<\w,\aa^\vee\>=1$ for $\a=\a_1$ and $=0$ otherwise.
\end{enumerate}
\end{rems}
Let $S$ be the set of simple roots of $L_0$, i.e., the set of vertices
of a diagram. Then inspection of \cref{table:local} shows that the
elements of
  \[\label{eq:Sprime}
    S':=\{\a\in S\mid \<\w,\aa^\vee\>>0\}
  \]
  are exactly those which are decorated. All other simple roots $\a$
  satisfy $\<\w,\aa^\vee\>=0$. Another inspection shows that the
  diagram of a local model is almost uniquely encoded by the pair
  $(S,S')$. What is getting lost is a factor $c$ of $\3$, $1$ or
  $2$, and the cases $a_1$ and $\Ca_1$ become
  indistinguishable. So we assign the formal symbol $c=i$ to the
  inhomogeneous cases. This way, the local model is uniquely encoded
  by the triple $(S,S',c)$ with $c\in\{\3,1,2,i\}$ which triggers the
  following
\begin{defn}
  A \emph{local diagram} is a triple $\cD=(S,S',c)$ associated to a
  local model in \cref{table:local}. In the homogeneous case, let
  $\w_\cD$ be the weight given in column 4. If $\cD$ is inhomogeneous
  and $S$ is non-empty then $\a_\cD$ denotes the unique element of
  $S'$. Moreover, we put $\aa^\vee_\cD:=\overline{\a_\cD}^\vee$.
  \end{defn}
  \section{The classification}
  Let $(X_1,X_2,\w)$ be a primitive spherical triple. Then we obtain
  two local models $Z_1,Z_2$ which determine two local diagrams
  $\cD_1=(S_1,S_1',c_1)$, $\cD_2=(S_2,S_2',c_2)$ where
  $S_1,S_2\subseteq S$. Put
  \[
    S^p(\w):= \{\a\in S\mid\<\w,\aa^\vee\>=0\}.
  \]
\begin{lem}\label{lem:cap}
  Let $(X_1,X_2,\w)$ be a primitive spherical triple. Then
  \[
    S(X_1)\cup S(X_2)=S\text{ and }S(X_1)\cap S(X_2)=S^p(\w).
  \]
\end{lem}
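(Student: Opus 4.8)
The plan is to unwind the definitions of $S(X_i)$ and $S^p(\w)$ and exploit the rigidity of a rank-one primitive situation, in particular equation \eqref{eq:XcX}. Recall that $X_1,X_2$ are the endpoints of the segment $\P=[X_1,X_2]$, that $\w$ generates $\G$, and that $X_2=X_1+c\w$ with $c>0$. A simple root $\a\in S$ lies in $S(X_1)\cup S(X_2)$ precisely when $\a$ vanishes at one of the two endpoints; since $\a$ is affine-linear and $\P$ is the segment joining the endpoints, $\a$ vanishes somewhere on $\P$ iff it vanishes at an endpoint or changes sign along $\P$, and in the latter case it vanishes at an interior point only. So the first identity $S(X_1)\cup S(X_2)=S$ is really the assertion that no simple root vanishes \emph{only} in the interior of $\P$, equivalently $S(\P)=S(X_1)\cup S(X_2)$ together with genuineness $S(\P)=S$.

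First I would prove $S(X_1)\cup S(X_2)\subseteq S$ trivially, and then $S=S(X_1)\cup S(X_2)$. Genuineness gives $S=S(\P)$, so it suffices to show that every $\a\in S(\P)$ lies in $S(X_1)\cup S(X_2)$. Suppose $\a\in S(\P)$ but $\a(X_1)\ne 0$ and $\a(X_2)\ne 0$. Since $\A$ is an alcove and $\P\subseteq\A$, we have $\a\ge 0$ on all of $\A$, hence $\a(X_1)>0$ and $\a(X_2)>0$; by convexity $\a>0$ on the whole segment $[X_1,X_2]=\P$, contradicting $\a\in S(\P)$. This is the easy half and uses only that simple roots are nonnegative on the alcove.

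Next, the identity $S(X_1)\cap S(X_2)=S^p(\w)$. For $\a\in S$ note $\a(X_2)-\a(X_1)=\langle\aa,c\w\rangle=c\langle\aa,\w\rangle$ from \eqref{afflin} and \eqref{eq:XcX}. One inclusion is immediate: if $\langle\w,\aa^\vee\rangle=0$ then $\langle\aa,\w\rangle=0$, so $\a(X_1)=\a(X_2)$; but every $\a\in S^p(\w)$ must vanish somewhere on $\P$ — here I would invoke that the local diagrams $\cD_i$ have vertex set $S_i=S(X_i)$ and that, by genuineness and the previous paragraph, $S^p(\w)$ is covered by $S(X_1)\cup S(X_2)$, so $\a$ vanishes at some $X_i$, hence (being constant on $\P$) at both, giving $\a\in S(X_1)\cap S(X_2)$. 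Wait — to be careful here, what one actually needs is that $S^p(\w)\subseteq S(X_1)\cup S(X_2)$, which follows from the first identity already proved. Conversely, if $\a\in S(X_1)\cap S(X_2)$ then $\a(X_1)=\a(X_2)=0$, so $c\langle\aa,\w\rangle=\a(X_2)-\a(X_1)=0$, hence $\langle\w,\aa^\vee\rangle=0$, i.e. $\a\in S^p(\w)$. The only subtle point is the forward inclusion $S^p(\w)\subseteq S(X_1)\cap S(X_2)$: an $\a$ with $\langle\w,\aa^\vee\rangle=0$ is constant along $\P$, and one must rule out that this constant is strictly positive. This is where primitivity enters: if $\a\in S^p(\w)$ had $\a>0$ on $\P$, then $\a\notin S(X)$ for no $X\in\P$, so $\a\in S^c$ in the notation of \cref{lemma:reduction}; but then the component of $S\setminus S^c$ containing $\a$ would be constrained — more directly, if $\langle\w,\aa^\vee\rangle=0$ and $\a(X)>0$ for all $X\in\P$ then $\a\in S^c$, contradicting genuineness $S(\P)=S$, since $\a\notin S(\P)$.

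The main obstacle I anticipate is exactly this last step: cleanly justifying that a simple root $\a$ with $\langle\w,\aa^\vee\rangle=0$ cannot be strictly positive on the whole segment. The resolution is that such an $\a$ is constant on $\P$ (as $\a(X_1)=\a(X_2)$ forces $\a$ constant on the segment joining them), so either it vanishes identically on $\P$ — putting it in $S(X_1)\cap S(X_2)$ — or it is a positive constant, putting it in $S^c$ and hence outside $S(\P)$, which contradicts genuineness. So in the end both identities reduce to: simple roots are nonnegative and affine-linear on the alcove, $\P$ is the segment $[X_1,X_2]$ with $X_2-X_1\in\R_{>0}\w$, and $\P$ is genuine ($S(\P)=S$). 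I would present the argument in that order: the sign/convexity observation for $S(X_1)\cup S(X_2)=S$, then the difference formula $\a(X_2)-\a(X_1)=c\langle\aa,\w\rangle$ for the two inclusions of the second identity, invoking genuineness to exclude the positive-constant case.
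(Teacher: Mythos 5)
Your proof is correct and follows essentially the same route as the paper: the first equality comes from genuineness together with the fact that simple roots are nonnegative and affine-linear on the alcove, and both inclusions of the second equality come from the difference formula $\a(X_2)-\a(X_1)=c\<\aa,\w\>$ combined with the first equality. The paper's proof is just a terser version of this argument; your detour through $S^c$ and primitivity is unnecessary (genuineness suffices, as you yourself conclude) but harmless.
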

\begin{proof}
  The first equality holds because the triple is genuine. The
  inclusion $S(X_1)\cap S(X_2)\subseteq S^p(\w)$ follows directly from
  \eqref{eq:XcX}. Assume conversely that $\a\in S^p(\w)$. Without loss
  of generality we may assume that also $\a\in S(X_1)$. But then also
  $\a\in S(X_2)$ by \eqref{eq:XcX}.
\end{proof}
From now let $i\in\{1,2\}$ and $j:=3-i$, so that if $Z_i$ is a local
model then $Z_j$ is the other.
\begin{lem}
  Let $(X_1,X_2,\w)$ be a primitive spherical triple and $\cD_1,\cD_2$
  as above. Then
  \[\label{eq:SSpS}
    S=S_1'\dot\cup S^p(\w)\dot\cup S_2'.
  \]
  Moreover,
  \[\label{eq:SSSS}
    S(X_i)=S_i'\dot\cup S^p(\w)=S\setminus S_j'.
  \]
\end{lem}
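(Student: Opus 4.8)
The plan is to pin down, for each $i\in\{1,2\}$, how the decorated set $S_i'$ of the local diagram $\cD_i$ sits inside $S(X_i)$ relative to the sign of $\langle\w,\cdot^\vee\rangle$, and then to glue the two local pictures together using \cref{lem:cap}. First I would unwind \cref{thm:SASV1} applied to the local model $Z_i$, which is a smooth affine spherical variety for $L_i:=K_{S(X_i)}^\C$ with weight monoid $\N\w$ (if $i=1$) or $\N(-\w)$ (if $i=2$): the group $L_i$ acts on $Z_i$ through a homomorphism $\phi_i\colon L_i\to L_0$ onto the group $L_0$ of \cref{table:local} (surjective, possibly only modulo scalars), and $S_i$ is the Dynkin diagram of $L_0$. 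A surjection of connected reductive groups has connected kernel of the shape (subtorus)$\cdot$(product of simple factors), so $S_i$ is a union of connected components of $S(X_i)$; hence $S_i\subseteq S(X_i)\subseteq S$, and the relevant weight ($\w$ or $-\w$) is pulled back along $\phi_i$, so it is trivial on every simple factor of $L_i$ absent from $L_0$. Since a character of a reductive group is trivial on all coroots, it follows that $\langle\w,\aa^\vee\rangle=0$ for every $\a\in S(X_i)\setminus S_i$, with no dependence on the scalar ambiguity occurring in the inhomogeneous cases.

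Next I would examine $\a\in S_i$: there the weight of $Z_i$ restricts to a dominant weight of $L_0$, so $\langle\w,\aa^\vee\rangle\ge 0$ for $\a\in S_1$ and $\langle\w,\aa^\vee\rangle\le 0$ for $\a\in S_2$, and by \eqref{eq:Sprime} the set $S_i'$ is exactly the locus where the relevant inequality is strict. Combining the two observations gives, for each $i$, that $S_i'\subseteq S(X_i)$ and that $S(X_i)\setminus S_i'=S(X_i)\cap S^p(\w)$: every root of $S(X_i)$ lying outside $S_i'$ pairs to $0$ with $\w$, while every root of $S_i'$ pairs to something nonzero. At this point \cref{lem:cap} enters: $S^p(\w)=S(X_1)\cap S(X_2)\subseteq S(X_i)$, so $S(X_i)\cap S^p(\w)=S^p(\w)$ and the previous identity upgrades to the disjoint decomposition $S(X_i)=S_i'\dot\cup S^p(\w)$, which is the first equality of \eqref{eq:SSSS}.

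Finally I would glue. By \cref{lem:cap}, $S=S(X_1)\cup S(X_2)$ and $S(X_1)\cap S(X_2)=S^p(\w)$, so $S\setminus S(X_i)=S(X_j)\setminus S^p(\w)$, and by the decomposition just obtained the right-hand side equals $S_j'$; hence $S(X_i)=S\setminus S_j'$, completing \eqref{eq:SSSS}. Uniting the two decompositions then gives $S=S(X_1)\cup S(X_2)=S_1'\cup S^p(\w)\cup S_2'$, and this union is disjoint: $S_i'\cap S^p(\w)=\leer$ was already noted, and $S_1'\cap S_2'=\leer$ because $\a\in S_1'$ forces $\langle\w,\aa^\vee\rangle>0$ whereas $\a\in S_2'$ forces $\langle\w,\aa^\vee\rangle<0$. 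This yields \eqref{eq:SSpS}.

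The only step that is not pure bookkeeping — and the one I would treat most carefully — is the structural input drawn from \cref{thm:SASV1} and \cref{table:local}: that $L_i$ reaches the tabulated group $L_0$ through a homomorphism which is componentwise on Dynkin diagrams, and under which the highest weight of $Z_i$ restricts to a dominant weight of $L_0$. In particular one should verify, in the inhomogeneous module cases where $\w$ is determined only up to a character of $L_i$, that all of the pairings $\langle\w,\aa^\vee\rangle$ used above are nonetheless well defined; this holds because characters vanish on coroots.
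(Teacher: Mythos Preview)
Your argument is correct and uses the same ingredients as the paper: the structural input from \cref{thm:SASV1} (simple roots of $L_i$ outside $S_i$ pair to zero with $\w$), dominance on $S_i$, the definition \eqref{eq:Sprime} of $S_i'$, and \cref{lem:cap}. The only difference is cosmetic: the paper first proves \eqref{eq:SSpS} by taking $\a\in S$ and splitting on the sign of $\langle\w,\aa^\vee\rangle$, then deduces \eqref{eq:SSSS} from \cref{lem:cap}; you instead first establish $S(X_i)=S_i'\dot\cup S^p(\w)$ locally and then glue. Both orderings are equally short once the structural input is in hand.
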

\begin{proof}
  It follows from \cref{thm:SASV1} that every $\a\in S(X_i)$ (a simple
  root of $L$) is either in $S_i$ (a simple root of $L_0$) or a
  simple root of $\ker\phi$. In the latter case, we have
  $\<\w,\aa^\vee\>=0$.
  
  Now let $\a\in S$ and assume first $\<\w,\aa^\vee\>>0$. Since the
  triple is genuine we have $S=S(X_1)\cup S(X_2)$. If $\a\in S(X_2)$
  then actually $\a\in S_2$. This contradicts $\<-\w,\aa^\vee\>\ge0$ for all
  $\a\in S_2$. Thus $\a\in S(X_1)$. By the same reasoning we have
  $\a\in S_1$. But then $\a\in S_1'$ by \eqref{eq:Sprime}.
  
  Analogously, $\<\w,\aa^\vee\><0$ implies $\a\in S_2'$. This proves
  \eqref{eq:SSpS}. The second equality \eqref{eq:SSSS} now follows
  from \cref{lem:cap}.
\end{proof}
\begin{defn}
  Let $S'$ be a subset of a graph $S$. The \emph{connected closure}
  $C(S',S)$ of $S'$ in $S$ is the union of all connected components of
  $S$ which meet $S'$. In other words, $C(S',S)$ is the set of
  vertices of $S$ for which there exists a path to $S'$.
\end{defn}
The following lemma shows in particular how to recover $S_i$ from the
triple $(S,S_1',S_2')$.
\begin{lem}\label{lem:prim}
  Let $(X_1,X_2,\w)$ be primitive. Then
  \begin{enumerate}
  \item\label{it:prim1} $S_i$ is the connected closure of $S_i'$ in
    $S\setminus S_j'$.
  \item\label{it:prim2} $S$ is the connected closure of
    $S_1'\cup S_2'$.
  \item\label{it:prim3} $S=S_1\cup S_2$.
  \end{enumerate}
\end{lem}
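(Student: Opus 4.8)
The plan is to prove the three items in sequence, each building on the previous. For \ref{it:prim1}, I would start from the description of $S(X_i)$ in \eqref{eq:SSSS}, namely $S(X_i)=S\setminus S_j'$, and recall from \cref{thm:SASV1} that the local model $Z_i$ is built from a connected-closure-type datum: its Dynkin diagram $S_i$ is the set of simple roots of $L_{0,i}$, and the simple roots of $K_{S(X_i)}$ that do not lie in $S_i$ are simple roots of $\ker\phi_i$. The key structural fact is that $L$ acts on $Z_i$ through a surjection $\phi_i:K_{S(X_i)}^\C\to L_{0,i}$, so $L_{0,i}$ is a quotient of $K_{S(X_i)}^\C$; its Dynkin diagram is therefore obtained from that of $K_{S(X_i)}$ by deleting some union of connected components. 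Since every decorated node of $\cD_i$ (i.e.\ every element of $S_i'$) lies in $S_i$, the diagram $S_i$ must contain every connected component of $S(X_i)=S\setminus S_j'$ that meets $S_i'$. For the reverse inclusion I would use primitivity: if $S_i$ contained a component $C$ of $S\setminus S_j'$ disjoint from $S_i'$, then $C$ would be a component of the Dynkin diagram of $L_{0,i}$ on which $\w$ (equivalently $\w_{\cD_i}$) vanishes, hence $C\subseteq S(X_1)\cap S(X_2)=S^p(\w)$ by \cref{lem:cap}, contradicting the primitivity hypothesis (via the \cref{rem:parallel}/\eqref{eq:CZ} argument in the paragraph preceding the definition of primitive). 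This pins down $S_i=C(S_i', S\setminus S_j')$.

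For \ref{it:prim2}, I would argue that $S = S(X_1)\cup S(X_2)$ (genuineness) equals $C(S_1', S\setminus S_2')\cup C(S_2', S\setminus S_1')\cup S^p(\w)$ by \ref{it:prim1} together with \eqref{eq:SSSS}, and then show this in turn lies inside $C(S_1'\cup S_2', S)$. The first two pieces are visibly inside the connected closure of $S_1'\cup S_2'$ in $S$. For the $S^p(\w)$ piece: take $\a\in S^p(\w)$ and let $C$ be its connected component in $S$. If $C$ met neither $S_1'$ nor $S_2'$, then $C\cap S^c=\leer$ (no node of $C$ is everywhere-nonzero on $\P$, since decorated nodes are exactly those with $\<\w,\cdot^\vee\>>0$ and $C$ has none) and $C\subseteq S(X_1)\cap S(X_2)$ by \cref{lem:cap}, so $C$ would be a component of $S$ contained in $S(X)$ for all $X\in\P$ — again contradicting primitivity. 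Hence $C$ meets $S_1'\cup S_2'$, so $\a\in C(S_1'\cup S_2', S)$. The opposite inclusion $C(S_1'\cup S_2',S)\subseteq S$ is trivial, giving \ref{it:prim2}.

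Finally \ref{it:prim3} follows by combining the first two: by \ref{it:prim1}, $S_1\cup S_2 = C(S_1', S\setminus S_2')\cup C(S_2', S\setminus S_1')$. Every connected component of $S$ meets $S_1'\cup S_2'$ by \ref{it:prim2}, so it meets $S_1'$ or $S_2'$; in the former case the component, minus possibly nodes of $S_2'$, is captured — here I need the small observation that a component of $S$ meeting $S_1'$ has its portion in $S\setminus S_2'$ still connected and meeting $S_1'$, because $S_1'$ and $S_2'$ lie in different connected components of $S$ whenever... — actually the cleanest route is: $S_1'$ and $S_2'$ are disjoint by \eqref{eq:SSpS}, and if a component $C$ of $S$ met both $S_1'$ and $S_2'$ we would still have $C\subseteq S_1\cup S_2$ provided $C\setminus S_2'$ and $C\setminus S_1'$ together cover $C$, which holds since $S_1'\cap S_2'=\leer$. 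So every node of every component of $S$ lies in $S_1\cup S_2$, i.e.\ $S=S_1\cup S_2$.

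The main obstacle I anticipate is \ref{it:prim1}: getting both inclusions requires carefully translating the surjectivity of $\phi_i$ from \cref{thm:SASV1} into the combinatorial statement that $S_i$ is a union of components of $S(X_i)$, and then invoking primitivity in precisely the right form to rule out superfluous components. Once \ref{it:prim1} is in hand, items \ref{it:prim2} and \ref{it:prim3} are bookkeeping with connected closures and the disjointness \eqref{eq:SSpS}.
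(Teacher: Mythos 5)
There is a genuine gap in your argument for item \ref{it:prim1}, specifically in the reverse inclusion $S_i\subseteq C(S_i',S\setminus S_j')$. You suppose $S_i$ has a connected component $C$ disjoint from $S_i'$, deduce $C\subseteq S^p(\w)=S(X_1)\cap S(X_2)$, and then claim a contradiction with primitivity. But primitivity only forbids a \emph{component of $S$} from being contained in $S(X)$ for all $X\in\P$, whereas your $C$ is only known to be a component of $S\setminus S_j'$ (equivalently, of $S_i$). Nothing in your argument excludes that $C$ is joined by an edge of $S$ to a vertex $\b\in S_j'$; in that case the component of $S$ containing $C$ also contains $\b$, and $\b\notin S(X_i)=S\setminus S_j'$, so that component is not contained in $\bigcap_{X\in\P}S(X)$ and primitivity gives no contradiction. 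The correct way to close this step --- and what the paper actually does --- is to inspect \cref{table:local}: in every local model, each connected component of the diagram contains a decorated vertex (the only disconnected diagrams are $[\2]d_2$, where both vertices are decorated), so $S_i=C(S_i',S_i)$. Combined with your (correct) observation that surjectivity of $\phi$ forces $S_i$ to be a union of connected components of $S(X_i)=S\setminus S_j'$, item \ref{it:prim1} follows; primitivity is neither needed nor usable here.

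The remainder of your plan is sound and essentially the paper's route. Item \ref{it:prim2} is exactly the paper's argument: a component of $S$ missing $S_1'\cup S_2'$ lies in $S^p(\w)$ by \eqref{eq:SSpS}, hence in $S(X)$ for all $X\in\P$ by \cref{lem:cap}, contradicting primitivity. For item \ref{it:prim3} your covering remark ($C\setminus S_1'$ and $C\setminus S_2'$ cover $C$) does not by itself show $C\subseteq S_1\cup S_2$; the clean argument for $C(S_1'\cup S_2',S)\subseteq S_1\cup S_2$ is to take a shortest path in $S$ from a given vertex to $S_1'\cup S_2'$: its interior avoids $S_1'\cup S_2'$, so if it terminates in $S_i'$ it lies entirely in $S\setminus S_j'$, placing the vertex in $S_i$.
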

\begin{proof}
  \ref{it:prim1} Recall that $S\setminus S_j'=S(X_i)$ is the
  disconnected union of $S_i$ and the Dynkin diagram $C_i$ of
  $\ker\phi$. Inspection of \cref{table:local} shows that $S_i$ is the
  connected closure of $S_i'$.
  
  \ref{it:prim2} Let $C\subseteq S$ be a component with
  $C\cap(S_1'\cup S_2')=\leer$. Then
  $C\subseteq S^p(\w)=S(X_1)\cap S(X_2)$ in contradiction to primitivity.
  
  \ref{it:prim3} By \ref{it:prim1}, the connected closure of
  $S_1'\cup S_2'$ in $S$ is $S_1\cup S_2$.
\end{proof}
\begin{defn}\label{def:prim5}
  Let $\cD$ be the quintuple $\cD=(S,S_1',c_1,S_2',c_2)$ where $S$ is a
  (possibly empty) Dynkin diagram, $S_1'$, $S_2'$ are disjoint
  (possibly empty) subsets of $S$ and $c_1,c_2\in\{\2,1,2\}$.
  
   Let  $S_i$ be the connected closure of $S_i'$ in $S\setminus S_j'$.  Then
  $\cD$ is a \emph{primitive spherical diagram} if it has following
  properties:
  \begin{enumerate}
  \item\label{it:prim5-1} $S=S_1\cup S_2$.
  \item\label{it:prim5-2} The triples $\cD_i:=(S_i,S_i',c_i)$ are
    local diagrams.
  \item\label{it:prim5-3}
    \begin{enumerate}
    \item\label{it:prim5-31} If both $\cD_i$ are homogeneous then
      $\w_{\cD_1}+\w_{\cD_2}=0$.
    \item\label{it:prim5-32} If $\cD_i$ is homogeneous and $\cD_j$ is
      inhomogeneous with $S_j\ne\leer$ then
      $\<\w_{\cD_i},\aa^\vee_{\cD_j}\>=-1$.
    \item\label{it:prim5-33} If both $\cD_i$ are inhomogeneous with
      both $S_i\ne\leer$ and $S$ is affine and irreducible then
      $k(\a_{\cD_1}^\vee)=k(\a_{\cD_2}^\vee)$ where $k(\a^\vee)$ is
      the colabel of $\a$, i.e., the label of $\a^\vee$ in the dual
      diagram of $S$.
    \end{enumerate}
  \end{enumerate}
\end{defn}
A primitive spherical diagram can be represented by the Dynkin diagram
of $S$ with decorations as in \cref{table:local} which indicate the
subsets $S_i'$ and the numbers $c_i$.
 
\begin{example}
  Consider the following diagram on $\sF_4^{(1)}$:
  \[
\begin{picture}(7200,1100)(0,-300)
  \put(1200,600){\tiny\3}
  \put(0,0){\usebox{\dynkinatwo}}
  \put(1800,0){\usebox{\dynkinffour}}
  \put(1800,0){\circle*{600}}
  \put(5400,0){\circle*{600}}
    \end{picture}
  \]
  It represents the quintuple with $S_1'=\{\a_1\}$, $c_1=\3$,
  $S_2'=\{\a_3\}$, $c_2=1$. Hence $S_1=\{\a_0,\a_1,\a_2\}$, and
  $S_2=\{\a_2,\a_3,\a_4\}$. Comparing with \cref{table:local} we see
  that the local diagrams are of type $\frac12d_3$ and $c_3$,
  respectively. The diagram is bihomogeneous so we need to check
  condition \ref{it:prim5-3}\ref{it:prim5-31}. Indeed
  \[
    \w_{\cD_1}+\w_{\cD_2}=(\2\aa_0+\aa_1+\2\aa_2)+(\aa_2+2\aa_3+\aa_4)=    \2(\aa_0+2\aa_1+3\aa_2+4\aa_3+2\aa_4)=0
  \]
  (compare with the labels of $\sF_4^{(1)}$ in \cref{table:Dynkin}). Thus,
  the above diagram is primitive spherical.
\end{example}
The point of \cref{def:prim5} is of course:
\begin{cor}
  Let $(X_1,X_2,\w)$ be a primitive spherical triple with local
  diagrams $(S_1,S_1',c_1)$ and $(S_2,S_2',c_2)$. Then
  $(S,S'_1,c_1,S'_2,c_2)$ is a primitive spherical diagram.
\end{cor}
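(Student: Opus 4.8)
The plan is to verify, one at a time, that the quintuple $\cD=(S,S_1',c_1,S_2',c_2)$ built from a primitive spherical triple satisfies each of the conditions in \cref{def:prim5}, drawing all the structural input from the lemmas just proved. Property \ref{it:prim5-1}, i.e.\ $S=S_1\cup S_2$, is exactly \cref{lem:prim}\ref{it:prim3}; and the fact that $S_i$ is the connected closure of $S_i'$ in $S\setminus S_j'$ is \cref{lem:prim}\ref{it:prim1}, so the $S_i$ appearing in \cref{def:prim5} genuinely agree with those of the triple. Property \ref{it:prim5-2} is immediate: by construction $\cD_i=(S_i,S_i',c_i)$ is the local diagram attached to the local model $Z_i$, which exists because the triple is spherical (\cref{cor:Kclass}\ref{it:Kclass2}). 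So the whole content of the corollary is condition \ref{it:prim5-3}, the numerical compatibility between the two local weights.

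For \ref{it:prim5-3}, recall from \eqref{eq:XcX} that $X_2=X_1+c\w$ with $c>0$. Translating this into root-theoretic data: when $\cD_i$ is homogeneous, the local model has weight monoid $\N\w_{\cD_i}$, so $\w$ restricted to the span of $S_i$ equals a positive multiple of $\w_{\cD_1}$ on the $Z_1$-side and a positive multiple of $\w_{\cD_2}$ on the $Z_2$-side (after accounting for the $\psi$-normalization and the sign convention $\N(-\w)$ for $Z_2$). First I would pin down the precise scaling: the definition of the local diagram records $\w_{\cD_i}$ as the specific integral combination of simple roots in column 4 of \cref{table:local}, and $c_i$ is exactly the rescaling factor relating $\w_{\cD_i}$ to the actual weight of $L$, so that the weight contributed on the $Z_i$-side is $\pm\w$ itself when we match constants correctly. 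Granting this, if both $\cD_i$ are homogeneous, then $\w$ and $-\w$ must each be expressible via the respective $\w_{\cD_i}$, and reading \eqref{eq:SSpS} (the partition $S=S_1'\dot\cup S^p(\w)\dot\cup S_2'$) one sees the two expressions describe the same affine-linear function on $\aaa$, forcing $\w_{\cD_1}+\w_{\cD_2}=0$, which is \ref{it:prim5-31}. If $\cD_i$ is homogeneous and $\cD_j$ inhomogeneous with $S_j\neq\leer$, then on the $Z_i$-side $\w=\w_{\cD_i}$ (up to the scaling) while on the $Z_j$-side only $\<\w,\aa^\vee\>$ for $\a\in S_j$ is constrained, and the inhomogeneous normalization $\w\sim\pi_1$ says $\<\w,\aa_{\cD_j}^\vee\>=1$ for the actual weight; combined with $X_2-X_1=c\w$ and $c>0$ one gets $\<\w_{\cD_i},\aa_{\cD_j}^\vee\>=-1$, which is \ref{it:prim5-32}. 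For \ref{it:prim5-33}, when both are inhomogeneous with $S_i\neq\leer$ and $S$ affine irreducible, the constant function $\d=\sum_{\a\in S}k(\a)\a$ from \eqref{eq:d} must be compatible with $X_2=X_1+c\w$: evaluating $\w$ against each $\aa^\vee$ and using that $\<\w,\aa^\vee\>=0$ for $\a\in S^p(\w)$ while it is $\pm1$ on $S_1',S_2'$ (with sign depending on the side), the vanishing of $\<\d^\vee,\text{(translation)}\>$ type relation yields $k(\a_{\cD_1}^\vee)=k(\a_{\cD_2}^\vee)$ when one works in the dual (colabel) diagram.

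The step I expect to be the main obstacle is the bookkeeping in the previous paragraph: making the identification ``$\w$ = (the weight of $Z_i$, rescaled by $c_i$)'' completely precise, including the role of $\psi=\exp$ versus $\psi=\operatorname{id}$, the sign flip between the $Z_1$- and $Z_2$-sides (the monoids are $\N\w$ and $\N(-\w)$), and the normalization of $c$ to be positive. Once the two local weights are expressed as honest affine-linear functions on $\aaa$ and identified with $\pm\w$ under the correct positive scalars, conditions \ref{it:prim5-31}--\ref{it:prim5-33} become a matter of reading off coefficients against the simple coroots, using \eqref{eq:SSpS}, \eqref{eq:SSSS}, and the Dynkin/colabel data of \cref{table:Dynkin} and \cref{table:local}. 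I would organize this as: (a) fix normalizations and state the scaling identity as a short sublemma; (b) deduce \ref{it:prim5-1} and \ref{it:prim5-2} from \cref{lem:prim}; (c) split into the three cases of \ref{it:prim5-3} and in each case equate the two descriptions of $\w$ coefficient-by-coefficient. No genuinely new idea beyond the already-proven lemmas is needed; the corollary is essentially a repackaging of \cref{lem:cap}, the preceding lemma, and \cref{lem:prim} together with the normalization conventions of \cref{table:local}.
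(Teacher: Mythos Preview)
Your approach is the same as the paper's in outline---verify \ref{it:prim5-1} and \ref{it:prim5-2} via \cref{lem:prim}, then check the three cases of \ref{it:prim5-3}---but you are substantially over-complicating the verification of \ref{it:prim5-3}\ref{it:prim5-31} and \ref{it:prim5-32}. The ``bookkeeping'' you flag as the main obstacle is a non-issue: by \cref{cor:Kclass}\ref{it:Kclass2} the weight monoid of $Z_1$ is literally $\N\w$, and for a homogeneous local model the generator of the weight monoid is \emph{by definition} the element $\w_{\cD_1}$ recorded in column~4 of \cref{table:local} (the remark after \cref{thm:SASV1} says exactly that this lift to $L$ is unique). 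So $\w=\w_{\cD_1}$ on the nose---no rescaling by $c_1$, no role for $\psi$, no positive scalar to normalize. Likewise $-\w=\w_{\cD_2}$, and \ref{it:prim5-31} is the one-line identity $\w_{\cD_1}+\w_{\cD_2}=\w+(-\w)=0$. For \ref{it:prim5-32} one similarly has $\w=\w_{\cD_i}$, while on the inhomogeneous side the normalization $-\w\sim\pi_1$ means $\<-\w,\aa_{\cD_j}^\vee\>=1$, hence $\<\w_{\cD_i},\aa_{\cD_j}^\vee\>=-1$. The paper accordingly dismisses these two cases as ``already verified'' and proves only \ref{it:prim5-33}.

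For \ref{it:prim5-33} your idea is correct but the formulation is slightly off. You invoke $\d=\sum_\a k(\a)\a$ from \eqref{eq:d} and speak of a ``$\<\d^\vee,\text{(translation)}\>$ type relation''; the clean statement is the dual relation $\sum_{\a\in S}k(\a^\vee)\,\aa^\vee=0$ among the gradients of the coroots (this is \eqref{eq:kvee} in the paper). Pairing this directly with $\w$ and using that $\<\w,\aa^\vee\>$ equals $1$, $-1$, or $0$ according as $\a=\a_{\cD_1}$, $\a=\a_{\cD_2}$, or $\a\in S^p(\w)$ (which is the whole of $S$ in the bi-inhomogeneous case since each $|S_i'|=1$) gives $k(\a_{\cD_1}^\vee)-k(\a_{\cD_2}^\vee)=0$ immediately. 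No passage through $X_1,X_2$ or the constant $c$ is needed.
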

\begin{proof}
  All conditions have been verified except for \ref{it:prim5-3}\
  \ref{it:prim5-33}. If $S$ is affine then the coroots satisfy the
  linear dependence relation
  \[\label{eq:kvee}
    \sum_{\a\in S}k(\a^\vee)\aa^\vee=0.
  \]
  We pair this with $\w$ and observe that $\<\w,\aa^\vee\>
  =1,-1,0$
  according to $\a=\a_{\cD_1}$, $\a=\a_{\cD_2}$ or otherwise. This
  implies the claim.
\end{proof}
The following is our main result. It will be proved in the next
section.
\begin{thm}\label{thm:main}
  Every primitive spherical diagram is isomorphic to an entry of
  \cref{ListHom}.
\end{thm}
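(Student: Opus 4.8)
The plan is to reduce the classification of primitive spherical diagrams to a finite combinatorial search over Dynkin diagrams $S$ together with decorated subsets $S_1', S_2'$. The starting point is \cref{def:prim5}: a primitive spherical diagram is determined by the data $(S, S_1', c_1, S_2', c_2)$ subject to conditions \ref{it:prim5-1}--\ref{it:prim5-3}. First I would exploit condition \ref{it:prim5-1}, namely $S = S_1 \cup S_2$ with $S_i$ the connected closure of $S_i'$ in $S \setminus S_j'$. Since each $S_i$ is, by \ref{it:prim5-2}, the vertex set of a \emph{local diagram} from \cref{table:local}, it is in particular connected (or a small exceptional disconnected case, namely $[\2]d_2$), so $S$ has at most two connected components, and each component is covered by $S_1$, by $S_2$, or straddled by their overlap. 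This already bounds the shape of $S$ drastically: it must be obtainable by gluing two diagrams from \cref{table:local} along a common (possibly empty) sub-diagram, where the glued-in part lies in $S^p(\w) = S(X_1) \cap S(X_2)$.

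The main body of the argument is then a case analysis organized by the types of the two local diagrams $\cD_1, \cD_2$, split into three regimes as in condition \ref{it:prim5-3}: (i) both homogeneous, (ii) one homogeneous and one inhomogeneous, (iii) both inhomogeneous. In case (i) the binding constraint is $\w_{\cD_1} + \w_{\cD_2} = 0$; since each $\w_{\cD_i}$ is an \emph{explicit} fixed rational combination of the simple roots of $S_i$ (column 4 of \cref{table:local}), this is a linear equation in the root lattice of $S$, and — crucially — for it to have a solution $S$ must be such that the nonnegative combination coming from $\cD_1$ equals the negative of that from $\cD_2$; comparing with the Dynkin labels $k(\a)$ from \eqref{eq:d} forces $S$ to be affine and irreducible in essentially all non-degenerate sub-cases, and then the labels $k(\a)$ pin down which two local diagrams can be glued. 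In case (iii), \eqref{eq:kvee} together with $\<\w, \aa^\vee\> \in \{1, -1, 0\}$ forces the colabels to satisfy $k(\a_{\cD_1}^\vee) = k(\a_{\cD_2}^\vee)$ (this is exactly condition \ref{it:prim5-33}, and its necessity was already shown in the corollary preceding the theorem), which again restricts $S$ to the affine irreducible diagrams and a handful of finite ones, and then one enumerates pairs of vertices with equal colabel. Case (ii) is intermediate: the condition $\<\w_{\cD_i}, \aa^\vee_{\cD_j}\> = -1$ is a single linear equation that, combined with the explicit form of $\w_{\cD_i}$, locates $\a_{\cD_j}$ within a bounded distance of $S_i$ in the diagram.

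For each admissible abstract gluing I would then verify the remaining local-diagram constraints from \cref{table:local} — in particular that the factor $c_i \in \{\2, 1, 2\}$ is compatible (the $\2$-cases only attach to $d$-type and certain $b$-type local models, the factor-$2$ cases to specific short-root situations, etc.), that the two decorations $S_1'$ and $S_2'$ are genuinely disjoint, and that the connected-closure recipe actually returns the claimed $S_i$ rather than something larger (this is where primitivity, via \cref{lem:prim}, does real work: no whole component of $S$ may lie inside $S^p(\w)$). Matching each survivor against \cref{ListHom} completes the proof. The genuinely laborious — and error-prone — step is the exhaustiveness of the enumeration in case (iii) and the homogeneous-homogeneous subcase of (i): one must walk through every irreducible affine Dynkin diagram $\sA_n^{(1)}, \sB_n^{(1)}, \ldots, \sG_2^{(1)}$ and its twisted cousins, list all vertices realizable as the decorated node $\a_{\cD}$ of a local model of each allowed factor $c$, and then pair them off subject to the colabel (resp.\ the $\w_{\cD_1} = -\w_{\cD_2}$) condition, being careful about diagram automorphisms so as not to double-count and not to miss the disconnected $d_2$ exception. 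I expect this bookkeeping — rather than any single conceptual difficulty — to be the principal obstacle, and I would present it as a table-driven check, deferring the routine per-diagram verifications to the display in \cref{ListHom} and to \cref{table:local}.
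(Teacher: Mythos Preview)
Your strategy is workable, but it is organized along a different axis from the paper's, and that difference matters for how smoothly the enumeration goes. You split the casework by the \emph{homogeneity type} of the two local models (both homogeneous / mixed / both inhomogeneous) and then invoke the algebraic constraint \ref{it:prim5-3} appropriate to each regime. The paper instead organizes the proof \emph{graph-theoretically}: first by whether the overlap $S_0^p:=S_1\cap S_2$ is empty or not, and in the empty case by the number $N\in\{0,1,2,3,4\}$ of edges joining $S_1'$ to $S_2'$. When $S_0^p\ne\leer$ the paper shows (after disposing of one cycle case) that $S$ is literally $S_1$ glued to $S_2$ along $S_0^p$ with no extra edges, and then runs through the possible isomorphism types of $S_0^p$ using a short table of which local models contain a given type as a component of $S_i\setminus S_i'$. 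When $S_0^p=\leer$ the count $N$ is bounded by $|S_1'|\cdot|S_2'|\le4$, and each value of $N$ is handled in a few lines. A preparatory lemma (\cref{lem:SSSS}) characterizes exactly when a candidate gluing really returns $S_i$ as the connected closure of $S_i'$; you allude to this issue but do not isolate it, and in practice it is what rules out many plausible-looking gluings.

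Your approach would also reach the answer, but two points deserve care. First, in your regime (iii) you phrase condition \ref{it:prim5-33} as if it \emph{restricts} $S$ to be affine irreducible; it does not. That condition is only a constraint \emph{when} $S$ is affine irreducible; for finite or reducible $S$ there is no constraint from \ref{it:prim5-3} at all in the bi-inhomogeneous case, and the enumeration must rest entirely on conditions \ref{it:prim5-1}--\ref{it:prim5-2}, i.e.\ on which Dynkin diagrams arise by stitching two $\Ca_n/\Cc_n$-shaped pieces. This is precisely where the paper's $N$-count does the work cleanly (the cases $N=0$ give the reducible $\sA_m\times\sA_n$, $\sA_m\times\sC_n$, $\sC_m\times\sC_n$; $N=1$ the connected finite ones; etc.). Second, in regime (i) your claim that $\w_{\cD_1}+\w_{\cD_2}=0$ forces $S$ affine \emph{irreducible} is slightly off: the reducible affine cases $\sA_1^{(1)}\times\sA_1^{(1)}$ and $\sA_2^{(2)}\times\sA_2^{(2)}$ are bihomogeneous and must be caught separately. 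None of this is fatal, but the paper's structural decomposition by $(S_0^p,N)$ keeps the bookkeeping shorter and makes exhaustiveness easier to verify than a scheme driven by the algebraic constraints alone.
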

Explanation of \cref{ListHom}: The first column gives the type of
$S$. The second lists for identification purposes the type of the
local models. The diagram is given in the fifth column. If a parameter
is involved, its scope is given in the last column. Observe the
boundary cases where we used the conventions $b_1=a_1$, $2b_1=2a_1$,
$c_2=b_2$, and $\Cc_1=\Ca_1$. In some cases, besides
$(S,S_1',c_1,S_2',c_2)$ also $(S,S_1',c\,c_1,S_2',c\,c_2)$ is
primitive spherical where $c$ is the factor in the column
\l/factor\r/. An entry of the form $[c]_{n=a}$ indicates that the factor
applies only to the case $n=a$.

Finally, the weight $\w$ can be read off from the third column. More
precisely, if $\cD_i$ is homogeneous then $\w_i$ indicates the unique
lift of $\w$ or $-\w$ to an affine linear function with
$\w_i(X_i)=0$. If both local models $\cD_1,\cD_2$ are inhomogeneous
then $\w$ is only unique up to a character of $G$. Thus, the notation
$\w\sim\w_0$ means $\<\w,\a^\vee\>=\<\w_0,\a^\vee\>$ for all
$\a\in S$. Here, $\pi_i\in\Xi\otimes\Q$ denotes the $i$-th fundamental
weight.

\begin{example}\label{ex:a11a22}
  The primitive diagrams for $\sA_1^{(1)}$ and $\sA_2^{(2)}$ are
  \[\label{eq:a11a22}
    \begin{picture}(2400,1800)(-300,-900)
      \put(-400,-300){$\bt$} \put(0,0){\usebox\leftrightbiedge}
      \put(1500,-300){$\bt$}
    \end{picture}\qquad
    \begin{picture}(2400,1800)(-300,-900) \put(0,0){\usebox\aone}
      \put(0,0){\usebox\leftrightbiedge} \put(1800,0){\usebox\aone}
    \end{picture}\qquad
    \begin{picture}(2400,1800)(-300,-900) \put(0,0){\usebox{\aprime}}
      \put(0,0){\usebox\leftrightbiedge}
      \put(1800,0){\usebox{\aprime}}
    \end{picture}\qquad
    \begin{picture}(2400,1800)(-300,-900)
      \multiput(0,0)(1800,0){2}{\circle*{300}} 
      \multiput(0,-67)(0,133){2}{\line(1,0){1800}}
      \multiput(0,-200)(0,400){2}{\line(1,0){1800}}
      \multiput(150,0)(25,25){20}{\circle*{50}}
      \multiput(150,0)(25,-25){20}{\circle*{50}}
      \put(0,0){\usebox\aone} \put(1500,-300){$\bt$}
    \end{picture}\qquad
    \begin{picture}(2400,1800)(-300,-900)
      \multiput(0,0)(1800,0){2}{\circle*{300}} 
      \multiput(0,-67)(0,133){2}{\line(1,0){1800}}
      \multiput(0,-200)(0,400){2}{\line(1,0){1800}}
      \multiput(150,0)(25,25){20}{\circle*{50}}
      \multiput(150,0)(25,-25){20}{\circle*{50}}
      \put(0,0){\usebox\aprime} \put(1800,0){\usebox\aone}
    \end{picture}
  \]
  In these cases, we have $\P=\A$ and
  $\w=\frac12\aa_1,\aa_1,2\aa_1,\frac12\aa_1,\aa_1$, respectively
  (where $S=\{\a_0,\a_1\}$).
\end{example}

The conditions defining a primitive spherical diagram $\cD$ have been
shown to be necessary but it is not clear whether each of them can be
realized by a (quasi-)Hamiltonian manifold $M$. And if so, how unique
is $M$? We answer these questions in \cref{thm:real} below. To state
it we need more notation.

\begin{defn}
  \begin{enumerate}
  \item For a finite root system $\Phi$ let $\pi_\a$ be the
    fundamental weight corresponding to $\a\in S$.
  \item If $\Phi$ is affine and irreducible (hence $\A$ is a simplex)
    let $P_\a\in\A$ be the vertex of $\A$ with $\a(P_\a)>0$.
  \end{enumerate}
\end{defn}
Let $\cD$ be a local diagram $\ne\Ca_0$. An inspection of
\cref{table:local} shows that the pairing $\<\w,\aa^\vee\>$ is in fact
independent of $\a\in S'$ (actually only $\a_{n\ge2}$ and
$[\2]d_{n\ge2}$ have to be checked). The common value will be denoted
by $n_\cD$. Here is a list:
  
\begin{lis}{The numbers $n_\cD$}\label{eq:nD}
  \setlength{\tabcolsep}{3pt}
    \begin{longtable}{>{$}l<{$}|>{$}c<{$}>{$}c<{$}>{$}c<{$}>{$}c<{$}>{$}c<{$}>{$}c<{$}>{$}c<{$}>{$}c<{$}>{$}c<{$}>{$}c<{$}>{$}c<{$}>{$}c<{$}>{$}c<{$}>{$}c<{$}>{$}c<{$\hspace{-4pt}}}
      \cD\vrule width 0pt depth 10pt&a_1&2a_1&a_{n\ge2}&b_{n\ge2}&2b_{n\ge2}&c_{n\ge3}&d_{n\ge2}&\2d_{n\ge2}&f_4&g_2&2g_2&b_3'&\2b_3'&\Ca_{n\ge1}&\Cc_{n\ge2}\\
      \hline
      n_\cD\vrule width 0pt height 15pt&2&4&1&1&2&1&2&1&1&1&2&2&1&1&1\\
    \end{longtable}
  \end{lis}
\begin{thm}\label{thm:real}
  Let $K$ be simply connected (also in the Hamiltonian case) and let
  $\cD\ne(\leer)$ be a primitive diagram for $(\aaa,\Phi,\Xi)$.
  \begin{enumerate}
  \item\label{eq:Xfin} If $\Phi$ is finite then $\cD$ can be realized by a
    multiplicity free Hamiltonian manifold of rank one. This manifold
    is unique up to a positive factor of the symplectic structure. The
    momentum polytope is given by
    \[
      X_i=c\,n_{\cD_j}\sum_{\a\in S_j'}\pi_\a
    \]
    (see \cref{eq:nD} for $n_\cD$) if both $S_i'$ are non-empty. If
    $S_1'\ne\leer$ and $S_2'=\leer$ then
    \[
      X_1=0\text{ and } X_2=c\w.
    \]
    In both cases, $c$ is some arbitrary positive factor.
  \item If $\Phi$ is infinite and irreducible then $\cD$ can be
    realized by a unique multiplicity free quasi-Hamiltonian manifold
    of rank one. The momentum polytope is given by
    \[\label{eq:Xaffin}
      X_i=\begin{cases}
        P_\a&\text{if }S_j'=\{\a\}\\
        \frac{k(\a^\vee)}{k(\a^\vee)+k(\b^\vee)}P_\a+
      \frac{k(\b^\vee)}{k(\a^\vee)+k(\b^\vee)}P_\b&
      \text{if }S_j'=\{\a,\b\}.
    \end{cases}
  \]
  
  \item If $\Phi$ is infinite and reducible (cases
    $\sA_1^{(1)}\times \sA_1^{(1)}$ and $\sA_2^{(2)}\times \sA_2^{(2)}$) then
    $\cD$ can be realized by a multiplicity free quasi-Hamiltonian
    manifold of rank one if and only if the scalar product is chosen
    to be the same on both factors of $K$, i.e., if the alcove $\A$ is
    a metric square. This manifold is then unique.
  \end{enumerate}
\end{thm}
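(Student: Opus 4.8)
The plan is to reduce everything to \cref{cor:Kclass}: realizing the primitive diagram $\cD=(S,S_1',c_1,S_2',c_2)$ by a multiplicity free $U$--Hamiltonian manifold of rank one is the same as producing a primitive spherical triple $(X_1,X_2,\w)$ whose two local models carry the diagrams $\cD_1=(S_1,S_1',c_1)$ and $\cD_2=(S_2,S_2',c_2)$; the uniqueness statements are then exactly the uniqueness built into \cref{cor:Kclass} (rescaling $\w$ rescales the symplectic structure, which is ruled out in the affine case because $\A$ is bounded). So there are three things to do: (i)~reconstruct $\w$; (ii)~reconstruct $X_1,X_2$ and check $[X_1,X_2]\subseteq\A$; (iii)~verify that the triple so obtained really is spherical, i.e.\ that $\N\w$ and $\N(-\w)$ are the weight monoids of smooth affine spherical $K_{S(X_1)}^\C$-- resp.\ $K_{S(X_2)}^\C$--varieties.

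For (i), note that by \eqref{eq:SSpS} and \eqref{eq:SSSS} the sets $S(X_i)=S\setminus S_j'$ are forced, hence so are the coroot pairings of $\w$: they must be $n_{\cD_1}$ on $S_1'$, $-n_{\cD_2}$ on $S_2'$, and $0$ on $S^p(\w)=S\setminus(S_1'\cup S_2')$ --- here one uses that, as observed just before \cref{eq:nD}, $\<\w,\aa^\vee\>$ depends only on which of these three blocks $\a$ lies in. In particular $\w$ is $K_{S(X_1)}^\C$--dominant and $-\w$ is $K_{S(X_2)}^\C$--dominant. If $\cD_i$ is homogeneous its weight is the explicit combination of simple roots $\w_{\cD_i}$ from \cref{table:local}, and condition \ref{it:prim5-3} of \cref{def:prim5} is precisely what forces the prescriptions coming from $\cD_1$ and from $\cD_2$ to coincide; if both $\cD_i$ are inhomogeneous and $\Phi$ is irreducible then $\overline\Phi$ spans $\overline\aaa$, so $\w$ is again determined by its coroot pairings, and it lies in $\Xi$ since these are integral and $K$ is simply connected.

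For (ii), first take $\Phi$ finite; put the common zero of $\Phi$ at the origin, so that $\A$ is the Weyl cone and $\a(X)=\tfrac12||\aa||^2\<\aa^\vee,X\>$. Writing $X_i=\sum_{\a\in S_j'}t_\a\pi_\a$, the requirement $S(X_i)=S\setminus S_j'$ forces all $t_\a>0$, and then pairing the relation $X_2=X_1+c\w$ of \eqref{eq:XcX} with $\aa^\vee$ for $\a\in S_j'$ forces $t_\a=c\,n_{\cD_j}$; this is the asserted formula (degenerating to $X_1=0$, $X_2=c\w$ when $S_2'=\leer$), with $c>0$ a free factor, which gives the rescaling ambiguity. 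Convexity of $\A$ and the sign pattern of $\<\w,\aa^\vee\>$ then give $[X_1,X_2]\subseteq\A$ and show the triple is genuine and primitive with the prescribed $S(X_i)$. Now take $\Phi$ infinite and irreducible, so $\A$ is a simplex with vertices $P_\a$; the requirement $S(X_i)=S\setminus S_j'$ places $X_i$ in the relative interior of $\mathrm{conv}\{P_\a\mid\a\in S_j'\}$, which is the vertex $P_\a$ when $S_j'=\{\a\}$. When $S_j'=\{\a,\b\}$ --- which forces $\cD_j$ to be of type $d$ or $\tfrac12 d$ --- the barycentric coordinate of $X_i$ is pinned down by $X_2-X_1\in\R_{>0}\w$, using $\a(P_\a)=\d/k(\a)$ together with the proportionality $k(\gamma^\vee)\sim k(\gamma)||\overline\gamma||^2$ (a consequence of \eqref{eq:kvee} and the constancy of $\d$); this produces the colabel--weighted vertex of \eqref{eq:Xaffin}. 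Again $[X_1,X_2]\subseteq\A$ is read off from the sign pattern of $\<\w,\aa^\vee\>$; the one genuinely delicate case is when both $\cD_i$ are inhomogeneous with $S_1,S_2\ne\leer$, where compatibility of the two endpoints along the line $X_1+\R\w$ is precisely the equality $k(\a_{\cD_1}^\vee)=k(\a_{\cD_2}^\vee)$ imposed in condition \ref{it:prim5-3}\ref{it:prim5-33}. Since $\A$ is now bounded there is no scaling freedom, so the manifold is unique.

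For (iii), observe $S(X_1)=S_1\dot\cup C_1$ with $C_1\subseteq S^p(\w)$, so $C_1$ is a union of connected components of the diagram $S(X_1)$; killing the corresponding semisimple factor and composing with the map onto the group $L_0$ of \cref{table:local} attached to $\cD_1$ yields a homomorphism $\phi:K_{S(X_1)}^\C\to L_0$, surjective modulo scalars, along which the model for $\cD_1$ pulls back to a smooth affine spherical $K_{S(X_1)}^\C$--variety; because $\w_{\cD_1}$ is a combination of simple roots its pullback is exactly $\w$ (and in the inhomogeneous case $\w$ is correct up to a character, which is all that matters), so the weight monoid is $\N\w$, and by Losev's theorem \cite{Los06} this is the unique such variety, with diagram $\cD_1$. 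The same for $X_2$, so $(X_1,X_2,\w)$ is a primitive spherical triple and \cref{cor:Kclass} furnishes the manifold. Finally, if $\Phi$ is infinite and reducible then $\A$ is a rectangle; condition \ref{it:prim5-3}\ref{it:prim5-31} forces both $\cD_i$ to be homogeneous of type $d_2$ or $\tfrac12 d_2$ with $S_i'$ meeting both components of $S$, so $X_1,X_2$ are the two opposite corners of $\A$ and $\P$ is its diagonal; the direction of $\w$ and the direction of the diagonal both depend on the ratio of the two scalar products, and they agree --- so that $X_2-X_1\in\R_{>0}\w$ --- exactly when this ratio makes $\A$ a metric square, after which (iii) applies verbatim. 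I expect the main obstacle to lie in (ii) in the affine case --- matching the purely combinatorial data of \cref{def:prim5} with the geometry of the alcove, which is where the colabels and condition \ref{it:prim5-3}\ref{it:prim5-33} enter and where the various boundary conventions ($b_1=a_1$, $c_2=b_2$, $\Cc_1=\Ca_1$, etc.) have to be checked case by case --- with a secondary difficulty in (iii), namely confirming, using that $K$ is simply connected, that each Levi $K_{S(X_i)}^\C$ indeed surjects (modulo scalars) onto the relevant $L_0$ of \cref{table:local}.
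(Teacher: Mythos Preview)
Your proposal is correct and follows the same three-step strategy as the paper's proof: reconstruct $\w$, then solve for $X_1,X_2$, then appeal to the local models of \cref{table:local}. The only cosmetic differences are that the paper organizes step~(i) as a case split on homogeneity of $\cD_1,\cD_2$ (rather than uniformly via the pairings $n_{\cD_i}$), and in step~(ii) for the affine case the paper works directly with the affine functions $\a^\vee:=\tfrac{2}{\|\aa\|^2}\a$ and the constant relation $\sum_{\b\in S}k(\b^\vee)\b^\vee\equiv\e$ to pin down $c$, rather than with barycentric coordinates and $\a(P_\a)=\d/k(\a)$ --- these are equivalent via the proportionality $k(\a^\vee)\sim k(\a)\|\aa\|^2$ you invoke; the paper is also terser on your step~(iii), taking the sphericality of $\N(\pm\w)$ as immediate once $\w$ has the correct coroot pairings.
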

\begin{proof}
  Let $L_i\subseteq K^\C$ be the (twisted) Levi subgroup having the
  simple roots $S(i):=S\setminus S_j'$. We have to construct
  $(X_1,X_2,\w)$ such that $S(X_i)=S(i)$, $X_2-X_1\in\R_{>0}\w$, and
  $\w$ or $-\w$ generates the weight monoid of a smooth affine spherical
  $L_1$-variety or $L_2$-variety, respectively.
  
  If both $\cD_1$ and $\cD_2$ are homogeneous then there are exactly two
  choices for $\w$ namely $\w_{\cD_1}$ and $\w_{\cD_2}$ which are
  related by $\w_{\cD_1}+\w_{\cD_2}=0$. We claim that
  $\<\w_{\cD_i},\aa^\vee\>\in\Z$ for all $\a\in S$. This follows by
  inspection for $\a\in S_i$. From $\w_{\cD_1}=-\w_{\cD_2}$ we get it
  also for $\a\in S_j$. Since $K$ is simply-connected, the weights
  $\w_{\cD_i}$ are integral, i.e., $\w\in\Xi$.
  
  If $\cD_1$ is homogeneous and $\cD_2$ is inhomogeneous we must put
  $\w=\w_{\cD_1}$. By condition \ref{it:prim5-2}\ref{it:prim5-32} of
  \cref{def:prim5} we have $\<-\w,\aa^\vee_{\cD_2}\>=1$. Let
  $\b\in S_2\setminus\{\a_{\cD_2}\}$. Then $\b$ is not connected to
  any $\a\in S_1$ by \cref{lem:prim}\ref{it:prim1}. From
  $\w\in\Q S_1$ we get $\<\w,\overline\b^\vee\>=0$. Hence $\w\in\Xi$
  and both $\N\w$ and $\N(-\w)$ form the weight monoid of a smooth
  affine spherical $L_1$- or $L_2$-variety, respectively.
  
  If both $\cD_i$ are inhomogeneous then we need a weight $\w$ with
  $\<\w,\aa^\vee_{\cD_1}\>=1$, $\<\w,\aa^\vee_{\cD_2}\>=-1$, and
  $\<\w,\aa^\vee\>=0$ otherwise. If $\Phi$ is finite then $\w$ exists
  and is unique since $S$ is a basis of $\Xi\otimes\Q$. If $S$ is
  affine and irreducible then $\w$ exists and is unique because of
  condition \ref{it:prim5-3}\ref{it:prim5-33} of \cref{def:prim5}. In
  both cases $\w$ is integral. The case of reducible affine root
  systems will be discussed at the end.
  
  This settles the reconstruction of $\w$. It remains to construct
  points $X_1,X_2\in\A$ with $S(X_1)=S(1)$, $S(X_2)=S(2)$ and
  $X_2-X_1\in\R_{>0}\w$. These boil down to the following set of
  linear (in-)equalities (where the last column just records the known
  behavior of $\w$):
  
  \[\label{eq:X1}
    \begin{array}{l|ll||l}
      \a&X_1&X_2&\w\\
      \hline
      \a\in S_1'&\a(X_1)=0&\a(X_2)>0&\<\w,\aa\>>0\\
      \a\in S_2'&\a(X_1)>0&\a(X_2)=0&\<\w,\aa\><0\\
      \a\not\in S_1'\cup S_2'&\a(X_1)=0&\a(X_2)=0&\<\w,\aa\>=0\\
    \end{array}
  \]
  \[\label{eq:c}
    X_2=X_1+c\w\text{ with }c>0.
  \]
  
  The inequalities \eqref{eq:X1} for $X_1$ define the relative
  interior of a face of the alcove $\A$ (observe that
  $S_2'\ne\leer$ if $\Phi$ is affine). The first and the third set of
  inequalities for $X_2$ then follow from \eqref{eq:c}. Inserting
  \eqref{eq:c} into the second set we get equalities for $X_1$ and
  $c$:
  \[\label{eq:cc}
    \a(X_1)=c\<-\w,\aa\>>0\text{ for all }\a\in S_2'.
  \]
  Define the affine linear function $\a^\vee:=\frac2{\|\aa\|}\a$.
  Then \eqref{eq:cc} is equivalent to
  \[\label{eq:aVX}
    \a^\vee(X_1)=cn_{\cD_2}>0\text{ for all }\a\in S_2'.
  \]
  This already shows assertion \ref{eq:Xfin} of the theorem.
  Now assume that $\Phi$ is affine and irreducible. Then there is the
  additional relation
  \[\label{eq:kae}
    \sum_{\b\in S}k(\b^\vee)\b^\vee(X)=\e\equiv\text{const.}>0,\ X\in\aaa.
  \]
  Setting $X=X_1$, we get
  \[
    cn_{\cD_2}\sum_{\b\in S_2'}k(\b^\vee)=\e.
    \]
    This means that $c$ is unique and positive. From \eqref{eq:aVX} we
    get
    \[
\a^\vee(X_1)=[\sum_{\b\in S_2'}k(\b^\vee)]^{-1}\e.
      \]
      Evaluation of \eqref{eq:kae} at $X=P_\a$ yields
      $\a^\vee(P_\a)=\frac{\e}{k(\a^\vee)}$. To obtain
      \eqref{eq:Xaffin} just observe that $S_2'$ has either one or two
      elements.
      
      Finally, assume $\Phi$ is reducible. The mixed types \l/finite
      times infinite\r/ do not appear in our context. For the two other
      cases, the existence of $([X_1,X_2],\w)$ is clear from the
      following graphics. In particular, they show why $\A$ must be a
      metric square.
      
      \definecolor{uuuuuu}{rgb}{0.26666666666666666,0.26666666666666666,0.26666666666666666}
      
      \begin{center}
        \begin{tabular}{ccc}
$A_1^{(1)} \times A_1^{(1)}$ &$A_1^{(1)} \times A_1^{(1)}$ & $A_2^{(2)} \times A_2^{(2)}$
\\
\begin{tikzpicture}[x=0.25cm,y=0.25cm,>=angle 60]
\fill[line width=2.pt,color=uuuuuu,fill=uuuuuu,fill opacity=0.1] (0,0) -- (7,0) -- (7,7) -- (0,7) -- cycle;
\draw [line width=1.pt,color=black] (0,0)--(7,0)--(7,7)--(0,7)--cycle;
\draw [->,line width=1.pt] (0,0) -- (3,0);
\draw [->,line width=1.pt] (0,0) -- (0,3);
\draw [->,line width=1.pt] (7,7) -- (4,7);
\draw [->,line width=1.pt] (7,7) -- (7,4);
\draw [->,line width=1.pt](0,0) -- (1.5,1.5);
\draw [->,line width=1.pt](7,7) -- (5.5,5.5);
\draw[line width=2pt,color=black](0,0)--(7,7);
\draw[color=black] (2.25,1.25) node {$\scriptscriptstyle\w$};
\draw[color=black] (3,-1) node {$\scriptscriptstyle\aa_1$};
\draw[color=black] (-1,3) node {$\scriptscriptstyle\aa_1'$};
\draw[color=black] (4,8) node {$\scriptscriptstyle\aa_0$};
\draw[color=black] (8,4) node {$\scriptscriptstyle\aa_0'$};
\end{tikzpicture}
&
\begin{tikzpicture}[x=0.25cm,y=0.25cm,>=angle 60]
\fill[line width=2.pt,color=uuuuuu,fill=uuuuuu,fill opacity=0.1] (0,0) -- (7,0) -- (7,7) -- (0,7) -- cycle;
\draw [line width=1.pt,color=black] (0,0)--(7,0)--(7,7)--(0,7)--cycle;
\draw [->,line width=1.pt] (0,0) -- (3,0);
\draw [->,line width=1.pt] (0,0) -- (0,3);
\draw [->,line width=1.pt] (7,7) -- (4,7);
\draw [->,line width=1.pt] (7,7) -- (7,4);
\draw [->,line width=1.pt](0,0) -- (3,3);
\draw [->,line width=1.pt](7,7) -- (4,4);
\draw[line width=2pt,color=black](0,0)--(7,7);
\draw[color=black] (3.75,2.75) node {$\scriptscriptstyle\w$};
\draw[color=black] (3,-1) node {$\scriptscriptstyle\aa_1$};
\draw[color=black] (-1,3) node {$\scriptscriptstyle\aa_1'$};
\draw[color=black] (4,8) node {$\scriptscriptstyle\aa_0$};
\draw[color=black] (8,4) node {$\scriptscriptstyle\aa_0'$};
\end{tikzpicture}
&\begin{tikzpicture}[x=0.25cm,y=0.25cm,>=angle 60]
\fill[line width=2.pt,color=uuuuuu,fill=uuuuuu,fill opacity=0.1] (0,0) -- (7,0) -- (7,7) -- (0,7) -- cycle;
\draw [line width=1.pt,color=black] (0,0)--(7,0)--(7,7)--(0,7)--cycle;
\draw [->,line width=1.pt] (0,0) -- (3,0);
\draw [->,line width=1.pt] (0,0) -- (0,3);
\draw [->,line width=1.pt] (7,7) -- (5.5,7);
\draw [->,line width=1.pt] (7,7) -- (7,5.5);
\draw [->,line width=1.pt](0,0) -- (1.5,1.5);
\draw [->,line width=1.pt](7,7) -- (5.5,5.5);
\draw[line width=2pt,color=black](0,0)--(7,7);
\draw[color=black] (2.25,1.25) node {$\scriptscriptstyle\w$};
\draw[color=black] (3,-1) node {$\scriptscriptstyle\aa_1$};
\draw[color=black] (-1,3) node {$\scriptscriptstyle\aa_1'$};
\draw[color=black] (5.5,8) node {$\scriptscriptstyle\aa_0$};
\draw[color=black] (8,5.5) node {$\scriptscriptstyle\aa_0'$};
\end{tikzpicture}
\\
$\w=\frac12(\aa_1+\aa_1')$ & $\w={\aa_1+\aa_1'}$ & $\w=\frac12(\aa_1+\aa_1')$
        \end{tabular}
        \end{center}
\vspace{-9mm}\end{proof}
\begin{example}
  Consider the diagram $\sD_4^{(2)}(dd)$
  \[
    \hbox{\boxit{\begin{picture}(6000,2400)(-300,-1200)
            \put(0,0){\usebox{\vertex}}
            \put(0,0){\usebox{\leftbiedge}}
            \put(3600,0){\usebox{\rightbiedge}}
            \put(1800,0){\usebox{\dynkinatwo}}
            \multiput(0,0)(3600,0){2}{\usebox{\wcircle}}
            \multiput(1800,0)(3600,0){2}{\usebox{\wcircle}}
            \multiput(0,-250)(3600,0){2}{\line(0,-1){950}}
            \multiput(1800,250)(3600,0){2}{\line(0,1){950}}
            \put(0,-1200){\line(1,0){3600}}
            \put(1800,1200){\line(1,0){3600}}
          \end{picture}}}
  \]
  Then $k(\a_0^\vee)=k(\a_3^\vee)=1$ and
  $k(\a_1^\vee)=k(\a_2^\vee)=2$, and $\w_1=\aa_0+\aa_2$ and
  $\w_2=\aa_1+\aa_3$ and
  \[
    X_1=\frac23 P_{\a_1}+\frac13P_{\a_3},\
    X_2=\frac13 P_{\a_0}+\frac23P_{\a_2}.
  \]
\vspace{-4mm}
\newsavebox{\tetraeder}
\savebox{\tetraeder}{\includegraphics[width=30mm]{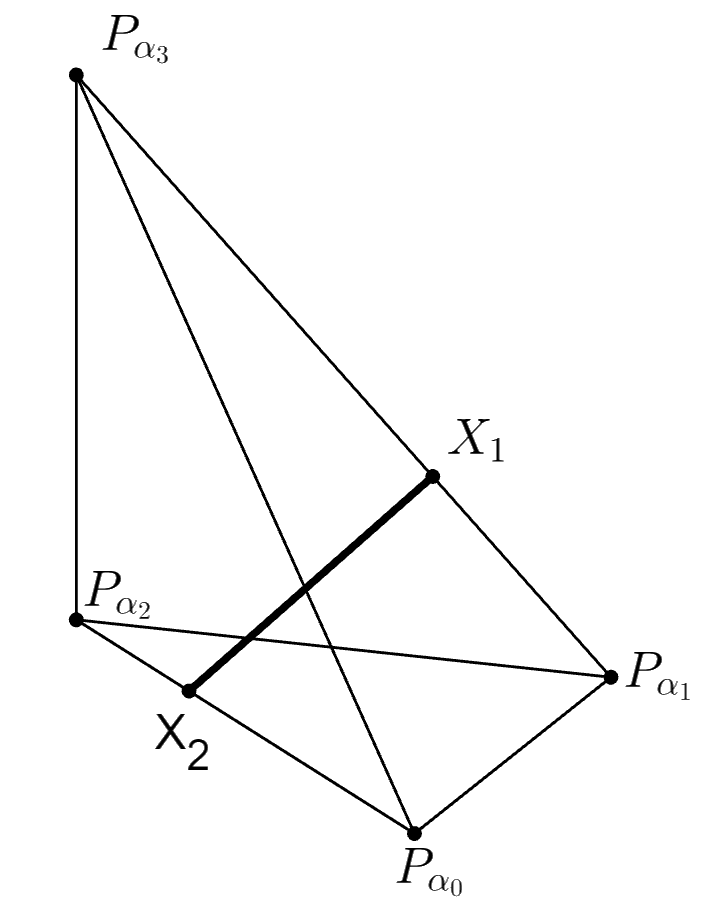}}
Here is a picture of $\P$ inside $\A$:\qquad\qquad\boxit{\begin{picture}(9000,12000)(4000,000)\put(0,0){\usebox\tetraeder}\end{picture}}
\end{example}
\begin{rems} \begin{enumerate}

  \item The three primitive diagrams for $\sA_1^{(1)}$ (see
    \cref{ex:a11a22}) correspond to the manifolds $S^4$ (the so-called
    \l/spinning 4-sphere\r/, \cite{HJ00,AMW02}), $S^2\times S^2$, and
    $\PP^2(\C)$, respectively (see \cite[\S2.7]{Kno14} for details).
    
 \item Generalizing the example above, the diagram
    $\sA_{n-1}^{(1)}(\Ca\Ca)$ with $n\ge2$ corresponds to the
    \l/spinning $2n$-sphere\r/ $S^{2n}$ discovered by
    Hurtubise-Jeffrey-Sjamaar \cite{HJS06}.
    
  \item The cases $\sC_{n\ge2}^{(1)}(\Cc\Cc)$ are realized by
    $\SP(2n)$ acting on the quaternionic Grassmannians
    $M=\operatorname{Gr}_d(\mathbb{H}^{n+1})$ (see \cite[Thm.\
    2.7.2]{Kno14}). This is a generalization of a result by Eshmatov
    \cite{Esh09} for $d=1$.
\end{enumerate}
\end{rems}
One can combine the classification of primitive diagrams with the
Reduction \cref{lemma:reduction}. For the formulation of the lemma, we define
\[
  k^\vee(S^c):=\gcd\{k(\a^\vee)\mid\a\in S^c\}.
\]
in case $\Phi$ is an irreducible affine root system.
\begin{defn}\label{def:primdia}
  Assume $\Phi$ is finite or irreducible. A \emph{spherical diagram}
  is a $6$-tuple $(S,S^c,S_1',c_1,S_2',c_2)$ with:
  \begin{enumerate}
    \item $S_1',S_2',S^c\subseteq S$ are
      pairwise disjoint
    \item $(S_{12},S_1',c_1,S_2',c_2)$ is a primitive diagram where
      $S_{12}$ is the connected closure of $S_1'\cup S_2'$ in
      $S\setminus S^c$. Set $\cD_i=(S_i,S_i',c_i)$ where $S_i$ is
      the connected closure of $S_i'$ in $S_{12}\setminus S_j'$.
    \item\label{it:primdia3} If $\cD_i$ is
      homogeneous then $\<w_{\cD_i},\aa^\vee\>\in\Z$ for all
      $\a\in S^c$.
    \item\label{it:primdia4} Assume $\cD_1$ and $\cD_2$ are both
      inhomogeneous with $\a_i:=\a_{\cD_i}$. Assume also that $\Phi$
      is affine and irreducible. Then $k^\vee(S^c)$ divides
      $k(\a_1^\vee)-k(\a_2^\vee)$.
    \end{enumerate}
\end{defn}
\begin{rem}
  The condition \ref{it:primdia3} is only relevant if $\cD_i$ is of type
  $\2d_{n\ge2}$ or $\2b_3'$.
\end{rem}
Again, the point of the definition is:
  
\begin{lem}\label{lem:SSScSc}
  Let $(X_1,X_2,\w)$ be a spherical triple. Put
  $S^c:=S\setminus(S(X_1)\cup S(X_2))$ and let $(S_i,S_i',c_i)$ be the
  local diagram at $X_i$. Then $(S,S^c,S_1',c_1,S_2',c_2)$ is a
  spherical diagram.
\end{lem}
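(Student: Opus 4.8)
The plan is to reduce to the primitive case via the Reduction \cref{lemma:reduction} and then transfer the four conditions of \cref{def:primdia}. So let $(X_1,X_2,\w)$ be a spherical triple, write $\P=[X_1,X_2]$ and $\G=\Z\w\subseteq\Xi$, put $L_i=K_{S(X_i)}^\C$, and let $Z_i$ be the local model (with weight monoid $\N\w$ for $i=1$ and $\N(-\w)$ for $i=2$) and $\cD_i=(S_i,S_i',c_i)$ its local diagram. Since $\P$ is a segment and every $\a\in S$ is affine-linear, $\a(X)\ne0$ for all $X\in\P$ exactly when $\a(X_1)\ne0\ne\a(X_2)$, so the set $S^c=S\setminus(S(X_1)\cup S(X_2))$ is precisely the set $S^c$ of \cref{lemma:reduction}. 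I would first record a purely local observation, valid without any primitivity hypothesis: by \cref{thm:SASV1} and \eqref{eq:Sprime}, a root $\a\in S(X_i)$ satisfies $\<\w,\aa^\vee\>\ne0$ if and only if $\a\in S_i'$ (any other root of $S(X_i)$ is either a simple root of $\ker\phi$ or an undecorated root of $S_i$, hence orthogonal to $\w$), and $S_1'\cap S_2'=\leer$ because the pairing with $\w$ would then be simultaneously positive and negative. Hence $\{\a\in S\setminus S^c\mid\<\w,\aa^\vee\>\ne0\}=S_1'\dot\cup S_2'$. Moreover $\a(X_2)=\a(X_1)+c\<\w,\aa\>$ by \eqref{eq:XcX}, so for $\a\in S\setminus S^c$ we get $\<\w,\aa^\vee\>=0\iff\a\in S(X_1)\cap S(X_2)$, and therefore $S(X_1)\cap S(X_2)=(S\setminus S^c)\setminus(S_1'\cup S_2')$.

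Next I would feed this into \cref{lemma:reduction}. The set $S_1$ from that lemma — the union of the components $C$ of $S\setminus S^c$ with $C\subseteq\bigcap_{X\in\P}S(X)=S(X_1)\cap S(X_2)$ — is, by the previous paragraph, exactly the union of the components of $S\setminus S^c$ disjoint from $S_1'\cup S_2'$. Consequently $\overline S:=S\setminus(S^c\cup S_1)$ equals the connected closure of $S_1'\cup S_2'$ in $S\setminus S^c$, i.e.\ $\overline S=S_{12}$; and, since $(\P,\G)$ is locally spherical of rank one, \cref{lemma:reduction} says that $(X_1,X_2,\w)$ is a \emph{primitive} spherical triple for $(\overline S,\overline\Xi)$ with $\overline\Xi=\Xi\cap S_1^\perp$. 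The crucial point is that this reduced triple still has local models $Z_1,Z_2$: the roots removed from $L_i$ in passing to $\overline L_i:=K_{\overline S(X_i)}^\C$ are those of $S_1\cap S(X_i)$, and any connected component of $S(X_i)$ meeting $S_1$ is contained in $S_1$ (it lies in a single component of $S\setminus S^c$, which, meeting $S_1$, must equal it); thus $S_1\cap S(X_i)$ is a union of whole components of $S(X_i)$ on which $\<\w,\cdot\>$ vanishes, so the corresponding semisimple factor of $L_i$ acts trivially on $Z_i$ and may be discarded — exactly the mechanism used in the discussion preceding \cref{def:prim5}. Hence $\cD_i$ is unchanged, and the corollary asserting that a primitive spherical triple yields a primitive spherical diagram gives that $(S_{12},S_1',c_1,S_2',c_2)$ is a primitive spherical diagram; moreover \cref{lem:prim}\ref{it:prim1} applied to the reduced triple shows that $S_i$ is the connected closure of $S_i'$ in $\overline S\setminus S_j'=S_{12}\setminus S_j'$. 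This verifies conditions (1) (disjointness being clear, as $S^c\cap S(X_i)=\leer$ and $S_1'\cap S_2'=\leer$) and (2) of \cref{def:primdia}.

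It remains to check conditions \ref{it:primdia3} and \ref{it:primdia4} of \cref{def:primdia}. Since $\w\in\G\subseteq\Xi$ and $(\Phi,\Xi)$ is an integral root system, $\<\w,\aa^\vee\>\in\Z$ for every $\a\in\Phi$, in particular for $\a\in S^c$; as $\w_{\cD_i}=\pm\w$ whenever $\cD_i$ is homogeneous, this is condition \ref{it:primdia3}. For \ref{it:primdia4}, assume $\cD_1,\cD_2$ are both inhomogeneous with $\a_i:=\a_{\cD_i}$ and $\Phi$ affine and irreducible. Then $\<\w,\aa_1^\vee\>=1$ and $\<\w,\aa_2^\vee\>=-1$ (this follows from the normalization $\w\sim\pi_1$ of the inhomogeneous local diagrams together with dominance of $\w$ for $L_1$ and of $-\w$ for $L_2$), while $\<\w,\aa^\vee\>=0$ for every $\a\in(S\setminus S^c)\setminus\{\a_1,\a_2\}$. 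Pairing $\w$ with the coroot relation \eqref{eq:kvee} then gives
\[
  k(\a_1^\vee)-k(\a_2^\vee)=-\sum_{\a\in S^c}k(\a^\vee)\<\w,\aa^\vee\>\in k^\vee(S^c)\,\Z,
\]
which is condition \ref{it:primdia4}. The step I expect to be the main obstacle is the middle paragraph, namely making rigorous that the reduction from $(S,\Xi)$ to $(\overline S,\overline\Xi)$ leaves the local models — hence the local diagrams $\cD_i$ — untouched; granting that, the identification $\overline S=S_{12}$ and the two integrality and divisibility conditions are just pairings against the defining linear relations.
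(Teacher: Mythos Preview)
Your proof is correct, and the argument for condition~\ref{it:primdia4} is exactly the one the paper gives: pair $\w$ with the coroot relation \eqref{eq:kvee} and use $\<\w,\aa_1^\vee\>=1$, $\<\w,\aa_2^\vee\>=-1$, $\<\w,\aa^\vee\>=0$ for $\a\in S\setminus(S^c\cup\{\a_1,\a_2\})$, and $\<\w,\aa^\vee\>\in\Z$ for $\a\in S^c$. The paper's proof is far terser than yours: it simply asserts that only~\ref{it:primdia4} requires an argument and gives that pairing computation, treating conditions (1)--(3) as immediate from the preceding reduction lemma and the corollary that primitive triples yield primitive diagrams.

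Your middle paragraph, which you flagged as the ``main obstacle'', is fine and is precisely what the paper is taking for granted. The key point---that the components of $S(X_i)$ removed in passing from $S$ to $\overline S$ are those lying entirely in components of $S\setminus S^c$ orthogonal to $\w$, hence contributing a trivially acting semisimple factor of $L_i$---is exactly the mechanism discussed before \cref{def:prim5}, so the local diagrams are indeed unchanged. One presentational warning: you use the symbol $S_1$ both for the first local diagram's underlying set $(S_1,S_1',c_1)$ and for the ``trivial components'' set from \cref{lemma:reduction}; these are different objects, and since both already occur in the paper with the same name, you should rename the latter (say $S^{\mathrm{triv}}$) to avoid confusion.
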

\begin{proof}
  Only \ref{it:primdia4} needs an argument. The weight $\w$ satisfies
  $\<\w,\aa_{\cD_1}^\vee\>=1$, $\<\w,\aa_{\cD_2}^\vee\>=-1$,
  $\<\w,\aa^\vee\>\in\Z$ for $\a\in S^c$, and $\<\w,\aa^\vee\>=0$ for
  all other $\a\in S$. Hence \ref{it:primdia4} follows from the linear
  dependence relation \eqref{eq:kvee}.
\end{proof}
  \begin{thm}
  Let $(\Phi,\Xi)$ be simply connected and $\cD$ a non-empty spherical
  diagram on $\Phi$. Let $\Phi$ be finite or affine, irreducible. Then
  $\cD$ is realized by a spherical triple $(X_1,X_2,\w)$.
\end{thm}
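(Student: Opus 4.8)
The plan is to build the triple $(X_1,X_2,\w)$ explicitly, following the proof of \cref{thm:real} but carrying along the extra simple roots in $S^c$ and in the components of $S\setminus S^c$ that miss $S_1'\cup S_2'$ (so the construction is inverse to the reduction of \cref{lemma:reduction}). Recall $S_{12}$ is the connected closure of $S_1'\cup S_2'$ in $S\setminus S^c$, so that $(S_{12},S_1',c_1,S_2',c_2)$ is a primitive diagram; put $S_1^\circ:=(S\setminus S^c)\setminus S_{12}$, a union of components of $S\setminus S^c$ and hence orthogonal to $S_{12}$ inside $S$. For $i=1,2$, $j:=3-i$, set $S(i):=(S\setminus S^c)\setminus S_j'=S_1^\circ\,\dot\cup\,(S_{12}\setminus S_j')$ and $L_i:=K_{S(i)}^\C$. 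It is enough to produce $X_1,X_2\in\A$ with $S(X_i)=S(i)$ and $X_2=X_1+c\,\w$ for some $c>0$ such that $\N\w$, resp. $\N(-\w)$, is the weight monoid of a smooth affine spherical $L_1$-, resp. $L_2$-, variety whose local diagram is $\cD_1$, resp. $\cD_2$; granting this, $S\setminus(S(X_1)\cup S(X_2))=S^c$ and \cref{lem:SSScSc} shows the triple realizes exactly $\cD$.

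\emph{Reconstruction of $\w$.} As in \cref{thm:real}, there are three cases. If at least one $\cD_i$ is homogeneous, put $\w:=\pm\w_{\cD_i}$ with the sign making $\w$ dominant for $L_1$: since $\w_{\cD_i}$ is a $\Q$-combination of roots of $S_i\subseteq S_{12}$ and $S_{12}\perp S_1^\circ$, $\w$ kills the coroots of $S_1^\circ$; it is integral on the coroots of $S_{12}$ by inspection of \cref{table:local} together with the relations $\w_{\cD_1}+\w_{\cD_2}=0$ (both homogeneous) or $\<\w_{\cD_i},\aa^\vee_{\cD_j}\>=-1$ (one homogeneous) from \cref{def:prim5}; and it is integral on the coroots of $S^c$ by condition (3) of \cref{def:primdia}. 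Since $(\Phi,\Xi)$ is simply connected, $\w\in\Xi$. If both $\cD_i$ are inhomogeneous, we seek $\w$ with $\<\w,\aa^\vee_{\cD_1}\>=1$, $\<\w,\aa^\vee_{\cD_2}\>=-1$, vanishing on the remaining coroots of $S_{12}\cup S_1^\circ$, and with integral values on the coroots of $S^c$ still to be chosen. For $\Phi$ finite, $S$ is linearly independent, so $\w$ exists (and is unique once set to $0$ on $S^c$). For $\Phi$ affine irreducible, pairing $\w$ with $\sum_{\a\in S}k(\a^\vee)\aa^\vee=0$ shows such integral values on $S^c$ exist precisely when $k^\vee(S^c)$ divides $k(\a_{\cD_1}^\vee)-k(\a_{\cD_2}^\vee)$, which is condition (4) of \cref{def:primdia}; when $S^c=\leer$, irreducibility forces $S_{12}=S$ and this degenerates to the equality $k(\a_{\cD_1}^\vee)=k(\a_{\cD_2}^\vee)$ built into \cref{def:prim5}. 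In every case $\w\in\Xi$ is non-zero, vanishes on the coroots of $S(1)\cap S(2)$, and equals $n_{\cD_i}$ on $S_i'$ and $0$ on $S(i)\setminus S_i'$.

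\emph{Reconstruction of $X_1,X_2$.} As $\w$ vanishes on the coroots of $S(1)\cap S(2)$, a point $X_1$ with $\a(X_1)=0$ on $S(1)$ automatically satisfies $\a(X_2)=0$ on $S(1)\cap S(2)$ for $X_2:=X_1+c\w$; on $S_2'$ the equation $\a(X_2)=0$ becomes $\a^\vee(X_1)=c\,n_{\cD_2}$ in the coroot normalization of \cref{thm:real}, and symmetrically $\a^\vee(X_2)=c\,n_{\cD_1}$ on $S_1'$, both positive. If $\Phi$ is finite, $c>0$ is free and, $S$ being linearly independent, one may choose $X_1$ with these prescribed values on $S(1)\cup S_2'$ and arbitrarily large positive values on $S^c$; then $X_1\in\A$ with $S(X_1)=S(1)$, and $X_2\in\A$ with $S(X_2)=S(2)$. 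If $\Phi$ is affine irreducible, the relation $\sum_{\a\in S}k(\a^\vee)\a^\vee(X)\equiv\e>0$ pins $c$ to the values $(\a^\vee(X_1))_{\a\in S^c}$ along one affine hyperplane; choosing $c$ small and any strictly positive vector $(\a^\vee(X_1))_{\a\in S^c}$ on it, the corrections $\a^\vee(X_2)-\a^\vee(X_1)=c\<\w,\aa^\vee\>$ are too small to spoil positivity, so $\a^\vee$ is strictly positive on $S^c$ at both $X_1$ and $X_2$ (the companion relation at $X_2$ is equivalent to the one at $X_1$ via $\sum_{\a\in S}k(\a^\vee)\<\w,\aa^\vee\>=0$). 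Either way $X_1,X_2\in\A$, $S(X_i)=S(i)$, $X_2=X_1+c\w$ with $c>0$, and $\P:=[X_1,X_2]$ is a non-degenerate segment.

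\emph{Conclusion.} Since $\w$ vanishes on the coroots of $S_1^\circ$ and of $C_i:=(S_{12}\setminus S_j')\setminus S_i$, the $L_i$-module $V_{n\w}$ is inflated through a homomorphism from $L_i$ onto the minimal group $L_0$ of the local model of $\cD_i$ (surjective, or surjective modulo scalars in the inhomogeneous case), so by \cref{thm:SASV1} the monoids $\N\w$ and $\N(-\w)$ are the weight monoids of smooth affine spherical $L_1$- and $L_2$-varieties with local diagrams $\cD_1,\cD_2$. Thus $(X_1,X_2,\w)$ is a spherical triple realizing $\cD$; equivalently, by \cref{cor:Kclass}, the corresponding compact, connected multiplicity free $U$-Hamiltonian manifold of rank one has diagram $\cD$. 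The one genuinely delicate step is the affine case of the placement of $X_1,X_2$: one has to check that the single linear coupling between $c$ and the $S^c$-coordinates still leaves room to keep all relevant affine-linear forms simultaneously strictly positive on $[X_1,X_2]$, which is why the explicit small-$c$ choice is needed and rests on the positivity of the colabels $k(\a^\vee)$ and of $\e$.
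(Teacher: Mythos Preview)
Your proof is correct and follows the same overall architecture as the paper: first reconstruct $\w$ from the diagram using conditions \ref{it:primdia3} and \ref{it:primdia4} of \cref{def:primdia}, then find $X_1,X_2\in\A$ with the right vanishing pattern. Your reconstruction of $\w$ is more explicit than the paper's (which simply cites those conditions and the argument of \cref{lem:SSScSc}), but identical in content.

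The one genuine methodological difference is in the affine placement step. The paper argues as follows: since $S^c\ne\leer$ (else the diagram is primitive and \cref{thm:real} applies), fix some $\a_0\in S^c$; then $S^f:=S\setminus\{\a_0\}$ generates a finite root system, and the finite case just proved produces $(X_1,X_2,c)$ satisfying every (in)equality for $\a\in S^f$. The set of such solutions is a cone (everything scales linearly), and $\a_0$ has positive constant term at the cone's apex, so a small enough solution makes $\a_0(X_1),\a_0(X_2)>0$ as well. This cleanly recycles the finite case rather than redoing it. Your direct ``small $c$'' argument reaches the same conclusion but needs one phrasing fix: you write ``choosing $c$ small and \emph{any} strictly positive vector $(\a^\vee(X_1))_{\a\in S^c}$ on it, the corrections \ldots are too small to spoil positivity''. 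Taken literally this is false, since for fixed small $c$ the positive vectors on the hyperplane can lie arbitrarily close to its boundary. What you need is to fix a strictly positive $y^0$ with $\sum_{\a\in S^c}k(\a^\vee)y^0_\a=\e$, then for small $c$ take $y_\a:=\frac{\e-cn_{\cD_2}\sum_{\b\in S_2'}k(\b^\vee)}{\e}\,y^0_\a$; these stay bounded away from $0$ as $c\to0$, and the corrections $c\<\w,\aa^\vee\>$ go to $0$. With that adjustment your argument is complete.
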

\begin{proof}
  The two last conditions \ref{it:primdia3} and \ref{it:primdia4} of
  \cref{def:primdia} make sure that there exists $\w\in\Xi$ which
  gives rise to the appropriate local model at $X_i$ (for
  \ref{it:primdia4}, see the argument in the proof of
  \cref{lem:SSScSc}). We show the existence of a matching polytope
  first in the finite case. In this case, we may assume that all roots
  $\a=\aa$ are linear and $S$ is linearly independent. Additionally to
  the inequalities \eqref{eq:X1} for $\a\not\in S^c$ we get the
  inequalities $\a(X_1),\a(X_2)>0$ for $\a\in S^c$. Because of linear
  independence, the values $\a(X_1)$ with $\a\in S$ can be prescribed
  arbitrarily. By \cref{thm:real}, we can do that in such a way that
  all inequalities for $\a\not\in S^c$ are satisfied. Now we choose
  $\a(X_1)>\!\!>0$ for all $\a\in S^c$. Since $c$ in \eqref{eq:c} is
  not affected by this choice, this yields $\a(X_2)>0$ for
  all $\a\in S^c$, as well.
  
  Now let $\Phi$ be affine, irreducible. If $S^c=\leer$ then the
  spherical diagram would be in fact primitive. This case has been
  already dealt with. So let $S^c\ne\leer$ and fix $\a_0\in S^c$. Then
  $S^f:=S\setminus\{\a_0\}$ generates a finite root system. We may
  assume that all $\a\in S^f$ are linear. Then the existence of a
  solution $(X_1,X_2)$ satisfying all inequalities for all $\a\in S^f$
  has been shown above. Now observe that the set of these solutions
  form a cone. If we choose a solution sufficiently close to the
  origin we get $\a_0(X_1),\a_0(X_2)>0$.
\end{proof}
   
A spherical diagram is drawn like a primitive diagram where the
roots $\a\in S^c$ are indicated by circling them.
\begin{example}
  Consider the diagram on $\sD_7$:
  \[
    \hbox{\boxit{\begin{picture}(8700,3000)(0,-1500)
      \put(0,0){\usebox{\dynkinafive}}
      \put(7200,0){\usebox{\athreebifurc}}
      \put(5000,-300){$\bt$} \put(3600,0){\usebox{\wcircle}}
    \end{picture}}}
  \]
  Then $S^c=\{\a_3\}$, $S_1=\{\a_5,\a_6,\a_7\}$ and $S_2=\{\a_4,\a_5\}$.
\end{example}
\begin{example}
  In addition to the primitive diagrams of \cref{ex:a11a22}, the root
  system $\sA_1^{(1)}$ supports the following spherical diagrams:
  \[
    \begin{picture}(2400,1800)(-300,-900)
      \put(-400,-300){$\bt$} \put(0,0){\usebox\leftrightbiedge}
      \put(1800,0){\usebox{\wcircle}}
    \end{picture}\qquad
    \begin{picture}(2400,1800)(-300,-900) \put(0,0){\usebox{\aone}}
      \put(0,0){\usebox\leftrightbiedge}
      \put(1800,0){\usebox{\wcircle}}
    \end{picture}\qquad
    \begin{picture}(2400,1800)(-300,-900) \put(0,0){\usebox{\aprime}}
      \put(0,0){\usebox\leftrightbiedge}
      \put(1800,0){\usebox{\wcircle}}
    \end{picture}\qquad
    \begin{picture}(2400,1800)(-300,-900) \put(0,0){\usebox{\wcircle}}
      \put(0,0){\usebox\leftrightbiedge}
      \put(1500,-300){$\bt$}
    \end{picture}\qquad
    \begin{picture}(2400,1800)(-300,-900) \put(0,0){\usebox{\wcircle}}
      \put(0,0){\usebox\leftrightbiedge}
      \put(1800,0){\usebox{\aone}}
    \end{picture}\qquad
    \begin{picture}(2400,1800)(-300,-900) \put(0,0){\usebox{\wcircle}}
      \put(0,0){\usebox\leftrightbiedge}
      \put(1800,0){\usebox{\aprime}}
    \end{picture}\qquad
    \begin{picture}(2400,1800)(-300,-900) \put(0,0){\usebox{\wcircle}}
      \put(0,0){\usebox\leftrightbiedge}
      \put(1800,0){\usebox{\wcircle}}
    \end{picture}
  \]
  If one identifies the alcove $\A$ with the interval $[0,1]$ then
  $\P=[t,1]$ in the first three cases, $\P=[0,t]$ in cases 4 through 6
  and $\P=[t_1,t_2]$ in the last case where $0<t<1$ and $0<t_1<t_2<1$
  are arbitrary.
\end{example}
\begin{example}
  Up to automorphisms, all spherical diagrams supported on $\sA_2^{(1)}$ are
  listed in the top row of
  
  {\setlength{\unitlength}{0.006 pt}
\begin{longtable}{cccccccccccc}
\begin{picture}(4600,4050)(-300,-300)
\put(0,0) {\circle*{400}}
\put(4000,0) {\circle*{400}}
\put(2000,3464) {\circle*{400}}
\put(0,0){\line(1,0){4000}}
\put(0,0){\line(1,1.71) {2000}}
\put(4000,0){\line(-1,1.71) {2000}}
\put (0,0) {\circle{800}}
\put (4000,0) {\circle{800}}
\put (2000,3464) {\circle{800}}
\end{picture}
&
\begin{picture}(4600,4050)(-300,-300)
\put(0,0) {\circle*{400}}
\put(4000,0) {\circle*{400}}
\put(2000,3464) {\circle*{400}}
\put(0,0){\line(1,0){4000}}
\put(0,0){\line(1,1.71) {2000}}
\put(4000,0){\line(-1,1.71) {2000}}
\put (0,0) {\circle{800}}
\put (2000,3464) {\circle{800}}
\end{picture}
&
\begin{picture}(4600,4050)(-300,-300)
\put(0,0) {\circle*{400}}
\put(4000,0) {\circle*{400}}
\put(2000,3464) {\circle*{400}}
\put(0,0){\line(1,0){4000}}
\put(0,0){\line(1,1.71) {2000}}
\put(4000,0){\line(-1,1.71) {2000}}
\put (0,0) {\circle{800}}
\put (2000,3464) {\circle{800}}
\put (3600,-400) {\usebox\quadrat}
\end{picture}
&
\begin{picture}(4600,4050)(-300,-300)
\put(0,0) {\circle*{400}}
\put(4000,0) {\circle*{400}}
\put(2000,3464) {\circle*{400}}
\put(0,0){\line(1,0){4000}}
\put(0,0){\line(1,1.71) {2000}}
\put(4000,0){\line(-1,1.71) {2000}}
\put (0,0) {\circle{800}}
\put (2000,3464) {\circle{800}}
\put(4000,800) {\circle{800}}
\put(4000,-800) {\circle{800}}
\end{picture}\vspace{4pt}
&
\begin{picture}(4600,4050)(-300,-300)
\put(0,0) {\circle*{400}}
\put(4000,0) {\circle*{400}}
\put(2000,3464) {\circle*{400}}
\put(0,0){\line(1,0){4000}}
\put(0,0){\line(1,1.71) {2000}}
\put(4000,0){\line(-1,1.71) {2000}}
\put (0,0) {\circle{800}}
\put (2000,3464) {\circle{800}}
\put(4000,-800) {\circle{800}}
\end{picture}
&
\begin{picture}(4600,4050)(-300,-300)
\put(0,0) {\circle*{400}}
\put(4000,0) {\circle*{400}}
\put(2000,3464) {\circle*{400}}
\put(0,0){\line(1,0){4000}}
\put(0,0){\line(1,1.71) {2000}}
\put(4000,0){\line(-1,1.71) {2000}}
\put (2000,3464) {\circle{800}}
\linethickness{1pt}
\put(-400,-400){\usebox\quadrat}
\end{picture}
&
\begin{picture}(4600,4050)(-300,-300)
\put(0,0) {\circle*{400}}
\put(4000,0) {\circle*{400}}
\put(2000,3464) {\circle*{400}}
\put(0,0){\line(1,0){4000}}
\put(0,0){\line(1,1.71) {2000}}
\put(4000,0){\line(-1,1.71) {2000}}
\put (0,0) {\circle{800}}
\put (4000,0) {\circle{800}}
\put (2000,3464) {\circle{800}}
\put (300,200) {\line(1,1){500}}
\multiput (1100,400)(600,0){4} {\line(1,1){300}}
\multiput (800,700) (600,0){4} {\line(1,-1){300}}
\put(3200,700) {\line(1,-1){500}}
\end{picture}
&
\begin{picture}(4600,4050)(-300,-300)
\put(0,0) {\circle*{400}}
\put(4000,0) {\circle*{400}}
\put(2000,3464) {\circle*{400}}
\put(0,0){\line(1,0){4000}}
\put(0,0){\line(1,1.71) {2000}}
\put(4000,0){\line(-1,1.71) {2000}}
\put (2000,3464) {\circle{800}}
\linethickness{1pt}
\put(-400,-400){\usebox\quadrat}
\put(3600,-400){\usebox\quadrat}
\end{picture}
&
\begin{picture}(4600,4050)(-300,-300)
\put(0,0) {\circle*{400}}
\put(4000,0) {\circle*{400}}
\put(2000,3464) {\circle*{400}}
\put(0,0){\line(1,0){4000}}
\put(0,0){\line(1,1.71) {2000}}
\put(4000,0){\line(-1,1.71) {2000}}
\put (2000,3464) {\circle{800}}
\put(-400,-400){\usebox\quadrat}
\put(4000,800) {\circle{800}}
\put(4000,-800) {\circle{800}}
\end{picture}
&
\begin{picture}(4600,4050)(-300,-300)
\put(0,0) {\circle*{400}}
\put(4000,0) {\circle*{400}}
\put(2000,3464) {\circle*{400}}
\put(0,0){\line(1,0){4000}}
\put(0,0){\line(1,1.71) {2000}}
\put(4000,0){\line(-1,1.71) {2000}}
\put (0,0) {\circle{800}}
\put (4000,0) {\circle{800}}
\put(2000,2620) {\circle{800}}
\put(2000,4220) {\circle{800}}
\put (300,200) {\line(1,1){500}}
\multiput (1100,400)(600,0){4} {\line(1,1){300}}
\multiput (800,700) (600,0){4} {\line(1,-1){300}}
\put(3200,700) {\line(1,-1){500}}
\end{picture}
&
\begin{picture}(4600,4050)(-300,-300)
\put(0,0) {\circle*{400}}
\put(4000,0) {\circle*{400}}
\put(2000,3464) {\circle*{400}}
\put(0,0){\line(1,0){4000}}
\put(0,0){\line(1,1.71) {2000}}
\put(4000,0){\line(-1,1.71) {2000}}
\linethickness{1pt}
\put(-400,-400){\usebox\quadrat}
\put(3600,-400){\usebox\quadrat}
\end{picture}
\\
\hline
\begin{picture}(4000,4500)(0,0)
\put(0,0){\line(1,0){4000}}
\put(0,0){\line(1,1.71) {2000}}
\put(4000,0){\line(-1,1.71) {2000}}
\linethickness{1pt}
\put(1200,1000) {\line(4,1){1200}}
\end{picture}
&
\begin{picture}(4000,4500)(0,0)
\put(0,0){\line(1,0){4000}}
\put(0,0){\line(1,1.71) {2000}}
\put(4000,0){\line(-1,1.71) {2000}}
\linethickness{1pt}
\put(700,1212) {\line(1,1.7732){600}}
\end{picture}
&
\begin{picture}(4000,4500)(0,0)
\put(0,0){\line(1,0){4000}}
\put(0,0){\line(1,1.71) {2000}}
\put(4000,0){\line(-1,1.71) {2000}}
\thicklines
\linethickness{1pt}
\put(1300,2200){\line(0,-1){1400}}
\end{picture}
&
\begin{picture}(4000,4500)(0,0)
\put(0,0){\line(1,0){4000}}
\put(0,0){\line(1,1.71) {2000}}
\put(4000,0){\line(-1,1.71) {2000}}
\thicklines
\linethickness{1pt}
\put(1300,2252){\line(1,-0.5773){1200}}
\end{picture}
&
\begin{picture}(4000,4500)(0,0)
\put(0,0){\line(1,0){4000}}
\put(0,0){\line(1,1.71) {2000}}
\put(4000,0){\line(-1,1.71) {2000}}
\thicklines
\linethickness{1pt}
\put(1300,2252){\line(1,-0.5773){1200}}
\end{picture}
&
\begin{picture}(4000,4500)(0,0)
\put(0,0){\line(1,0){4000}}
\put(0,0){\line(1,1.71) {2000}}
\put(4000,0){\line(-1,1.71) {2000}}
\linethickness{1pt}
\put(2000,3464) {\line(-1,-1.732){1200}}
\end{picture}
&
\begin{picture}(4000,4500)(0,0)
\put(0,0){\line(1,0){4000}}
\put(0,0){\line(1,1.71) {2000}}
\put(4000,0){\line(-1,1.71) {2000}}
\linethickness{1pt}
\put(2000,3300) {\line(0,-1){2400}}
\end{picture}
&
\begin{picture}(4000,4500)(0,0)
\put(0,0){\line(1,0){4000}}
\put(0,0){\line(1,1.71) {2000}}
\put(4000,0){\line(-1,1.71) {2000}}
\linethickness{1pt}
\put(1000,1710) {\line(1,0){2000}}
\end{picture}
&
\begin{picture}(4000,4500)(0,0)
\put(0,0){\line(1,0){4000}}
\put(0,0){\line(1,1.71) {2000}}
\put(4000,0){\line(-1,1.71) {2000}}
\linethickness{1pt}
\put(1300,2252) {\line(1,-0.5773){2100}}
\end{picture}
&
\begin{picture}(4000,4500)(0,0)
\put(0,0){\line(1,0){4000}}
\put(0,0){\line(1,1.71) {2000}}
\put(4000,0){\line(-1,1.71) {2000}}
\linethickness{1pt}
\put(2000,0) {\line(0,1){3300}}
\end{picture}
&
\begin{picture}(4000,4500)(0,0)
\put(0,0){\line(1,0){4000}}
\put(0,0){\line(1,1.71) {2000}}
\put(4000,0){\line(-1,1.71) {2000}}
\linethickness{1pt}
\put(0,0) {\line(1,0){4000}}
\end{picture}
\\
\end{longtable}}
The bottom row lists the corresponding momentum polytopes.  Observe
that each simple root of a Dynkin diagram in the first row corresponds
to the opposite edge of the alcove below it.
\end{example} 
\begin{rem}
  If $\Phi$ is a product of more than one affine root system then
  there are problems with the metric of $\A$ as we have already seen
  for the primitive case where $\A$ must be a metric square. We have
  not explored this case in full generality.
\end{rem}
\section{Verification of \cref{thm:main}}
Recall $i\in\{1,2\}$ and $j:=3-i$. Let $(S,S_1',c_1,S_2',c_2)$ be a
primitive diagram. Recall that $S_i$ is the connected closure of
$S_i'$ in $S\setminus S_j'$. 

Put
\[
  S^p_0:=S_1\cap S_2.
\]
Observe that these are exactly the simple roots that are not decorated in the diagram.
Our strategy is to construct $S$ by gluing $S_1$ and $S_2$ along
$S_0^p$. For this we have to make sure that $S_i$ remains the
connected closure of $S_i'$.
\begin{lem}\label{lem:SSSS}
  Let $S$ be a graph with subsets $S_1',S_1,S_2',S_2$ such that
  $S_i'\subseteq S_i\subseteq S\setminus S_j'$. Assume
  that $S=S_1\cup S_2$ and $S_i=C(S_i',S_i)$. Then the following are
  equivalent:
  \begin{enumerate}
  \item\label{it:SSSS1} $S_i$ is the connected closure of $S_i'$ in
    $S\setminus S_j$.
  \item\label{it:SSSS2} The following two conditions hold:
    \begin{enumerate}
    \item\label{it:SSSS21} $S_1\cap S_2$ is the union of connected
      components of $S_i\setminus S_i'$.
    \item\label{it:SSSS22} If $\a_1\in S_1\setminus S_2$ is
      connected to $\a_2\in S_2\setminus S_1$ in $S$ then
      $\a_1\in S_1'$ and $\a_2\in S_2'$.
    \end{enumerate}
  \end{enumerate}
\end{lem}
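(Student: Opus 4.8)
The plan is to translate each of \ref{it:SSSS1} and \ref{it:SSSS2} into a statement about which pairs of vertices of $S$ are joined by an edge, and then to observe that the two translations coincide. (As in \cref{def:prim5}, the connected closure in \ref{it:SSSS1} is taken in the graph $S\setminus S_j'$, and \ref{it:SSSS1} is read for both $i=1$ and $i=2$, with $j=3-i$.) I would first record a few elementary set identities forced by the hypotheses. From $S_j'\subseteq S_j\subseteq S\setminus S_i'$ (the hypothesis with $i$ and $j$ exchanged) we get $S_j\cap S_i'=\leer$, hence $S_1\cap S_2\subseteq S_i\setminus S_i'$ for each $i$, while $S_i\cap S_j'=\leer$ is immediate. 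Putting $E_j:=S_j\setminus(S_i\cup S_j')=(S_j\setminus S_i)\setminus S_j'$, the hypothesis $S=S_1\cup S_2$ gives a disjoint decomposition $S\setminus S_j'=S_i\sqcup E_j$, and the complement of $S_1\cap S_2$ inside the vertex set $S_i\setminus S_i'$ is exactly $(S_i\setminus S_j)\setminus S_i'$.

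The key step is to replace \ref{it:SSSS1} by a non-adjacency statement. Since $S_i=C(S_i',S_i)$, every vertex of $S_i$ is joined to $S_i'$ by a path lying inside $S_i$, so $S_i\subseteq C(S_i',S\setminus S_j')$ holds unconditionally. Using $S\setminus S_j'=S_i\sqcup E_j$, I claim that \ref{it:SSSS1} holds for the index $i$ if and only if \emph{no edge of $S$ joins $S_i$ to $E_j$}: such an edge would let a vertex of $E_j$ be reached from $S_i'$ within $S\setminus S_j'$, so $C(S_i',S\setminus S_j')\ne S_i$; conversely, if there is no such edge then $S_i$ is a union of connected components of $S\setminus S_j'$, each of which meets $S_i'$ (again because $S_i=C(S_i',S_i)$), so equality holds. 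Splitting $S_i=(S_i\setminus S_j)\sqcup(S_1\cap S_2)$ and recalling $E_j=(S_j\setminus S_i)\setminus S_j'$, the condition ``no edge joins $S_i$ to $E_j$'' is the conjunction of two clauses --- which I denote (A1), (B1) for $i=1$ and (A2), (B2) for $i=2$ --- namely ``no edge joins $S_i\setminus S_j$ to $(S_j\setminus S_i)\setminus S_j'$'' and ``no edge joins $S_1\cap S_2$ to $(S_j\setminus S_i)\setminus S_j'$''. Hence \ref{it:SSSS1} is equivalent to (A1), (A2), (B1) and (B2) holding simultaneously.

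It then remains to identify \ref{it:SSSS2} with the same four conditions. An edge between $S_1\setminus S_2$ and $S_2\setminus S_1$ violates \ref{it:SSSS22} exactly when its $S_1$-endpoint lies outside $S_1'$ or its $S_2$-endpoint lies outside $S_2'$; thus \ref{it:SSSS22} is the conjunction of ``no edge from $S_1\setminus S_2$ to $(S_2\setminus S_1)\setminus S_2'$'' (which is (A1)) and ``no edge from $S_2\setminus S_1$ to $(S_1\setminus S_2)\setminus S_1'$'' (which is (A2)). For \ref{it:SSSS21}: since $S_1\cap S_2\subseteq S_i\setminus S_i'$, the set $S_1\cap S_2$ is a union of connected components of the subgraph induced on $S_i\setminus S_i'$ if and only if no edge joins it to its complement $(S_i\setminus S_j)\setminus S_i'$ there; as both vertex sets lie in $S_i\setminus S_i'$, this says simply ``no edge of $S$ joins $S_1\cap S_2$ to $(S_i\setminus S_j)\setminus S_i'$'', which is (B2) for $i=1$ and (B1) for $i=2$. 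Hence \ref{it:SSSS2} amounts precisely to (A1), (A2), (B1) and (B2), which by the previous paragraph is \ref{it:SSSS1}.

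The step I expect to need the most care is the equivalence in the second paragraph: one must invoke $S_i=C(S_i',S_i)$ at exactly the right place, and bear in mind that $E_j$ already omits $S_j'$, so that an edge of $S$ between $S_i$ and $E_j$ is literally the same thing as an edge of the graph $S\setminus S_j'$. Once that is settled, the rest is routine bookkeeping with the set identities collected at the outset.
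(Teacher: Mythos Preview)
Your argument is correct and follows essentially the same route as the paper: reduce \ref{it:SSSS1} (read, as you note, with $S\setminus S_j'$) to the non-adjacency of $S_i$ and $(S_j\setminus S_i)\setminus S_j'$ via $S_i=C(S_i',S_i)$, then split $S_i=(S_i\setminus S_j)\sqcup(S_1\cap S_2)$ to obtain the two clauses \ref{it:SSSS21} and \ref{it:SSSS22}. The only difference is presentational---you make the four pieces (A1), (A2), (B1), (B2) explicit and track the $i\leftrightarrow j$ swap in matching \ref{it:SSSS21} to the (B)-clauses, whereas the paper leaves this bookkeeping implicit.
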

\begin{proof}
  Because of $S_i=C(S_i',S_i)$, the assertion
  $S_i=C(S_i',S\setminus S_j')$ means that there are no edges between
  $S_i$ and
  \[
    S\setminus(S_i\cup S_j')=(S_j\setminus S_j')\setminus(S_i\cap
    S_j)=(S_j\setminus S_i)\setminus S_j'.
  \]
  This statement breaks up into two parts: There are no edges between
  $S_i\cap S_j$ and $(S_j\setminus S_j')\setminus(S_i\cap S_j)$ which
  is just condition \ref{it:SSSS2}\ref{it:SSSS21}. And there are no
  edges between $S_i\setminus S_j$ and
  $(S_j\setminus S_i)\setminus S_j'$ which is just condition
  \ref{it:SSSS2}\ref{it:SSSS22}.
\end{proof}
\subsection{The case $S_0^p\ne\leer$}\label{ss:nl}
We start our classification with:
\begin{lem}
  Let $\cD$ be a primitive spherical diagram with $S_0^p\ne\leer$ such that there
  is at least one edge between $S_1'$ and $S_2'$. Then
  $\cD\cong \sA_{n\ge2}^{(1)}(\Ca\Ca)$:
    \[
      \begin{picture}(9900,1200)(-450,-900)
        \thicklines
        \multiput(0,0)(5400,0){2}{\usebox{\dynkinathree}}
        \put(3600,0){\usebox{\shortsusp}}
        \multiput(-600,0)(9600,0){2}{\line(1,0){600}}
        \multiput(-600,0)(10200,0){2}{\line(0,-1){1200}} \put
        (-650,-1200){\line(1,0){10250}}
        \put(-350,-300){$\btr$} \put(8650, -300){$\btl$}
      \end{picture}
    \]
\end{lem}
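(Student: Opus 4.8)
The plan is to show that the two hypotheses together force a cycle in the Dynkin diagram $S$, hence $S$ is of affine type $\sA_n^{(1)}$, and then to exploit the arc structure of $S_1,S_2$ inside that cycle, together with \cref{table:local}, to identify both local models and pin down their decorations. I would begin with two elementary remarks. By \cref{def:prim5} we have $S_i=C(S_i',S\setminus S_j')$, so $S_i\cap S_j'=\leer$ and likewise $S_i'\cap S_j=\leer$; in particular every vertex of $S_0^p=S_1\cap S_2$ is undecorated, whence $\leer\ne S_0^p\subseteq S_i\setminus S_i'$, so $S_i'\subsetneq S_i$. Inspecting \cref{table:local}, this forces each $S_i$ to be connected (the only disconnected local diagrams, $d_2$ and $\2 d_2$, would satisfy $S_i'=S_i$) and to contain an undecorated vertex. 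Now fix an edge joining $\b_1\in S_1'$ to $\b_2\in S_2'$ and fix $\b\in S_0^p$; then $\b\notin\{\b_1,\b_2\}$, $\b_2\notin S_1$ and $\b_1\notin S_2$. Concatenating a path $\b_1\leadsto\b$ inside $S_1$, a path $\b\leadsto\b_2$ inside $S_2$, and the edge $\b_2\b_1$, we obtain a closed walk traversing the edge $\b_1\b_2$ exactly once (its endpoints lie in the disjoint sets $S_1\setminus S_2$ and $S_2\setminus S_1$), so $S$ contains a cycle. Among finite and affine Dynkin diagrams only the $\sA_n^{(1)}$ $(n\ge1)$ have this property, and $\sA_1^{(1)}$ is excluded because it would force $|S_i|\le1$, making the single vertex of $S_0^p$ lie both in and not in $S_i'$. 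Thus $S=\sA_n^{(1)}$ is an $(n{+}1)$-cycle, all of whose bonds are simple and all of whose colabels equal $1$.

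Next I would analyse the arcs. Put $A=S_1\setminus S_2$, $B=S_0^p$, $C=S_2\setminus S_1$, a partition of $S$; from $S_i\subseteq S\setminus S_j'$ we get $S_1'\subseteq A$ and $S_2'\subseteq C$, and $B\ne\leer$. Since $\b_1\in S_1'$ has the neighbour $\b_2\notin S_1$, the vertex $\b_1$ is an endpoint of the arc $S_1$; as $S_1'\subseteq A$ and $S_1=A\cup B$ is a connected arc with $A,B$ nonempty, this is only possible when $A,B,C$ are three consecutive arcs of the cycle, $S_1=A\cup B$ having its second endpoint inside $B$. Being a connected, simply-laced, path-shaped local diagram, $S_1$ is of type $a_m$, $\Ca_m$, $d_3$ or $\2 d_3$ by \cref{table:local}; in every case other than $\Ca_m$ the decorated set $S_1'$ is either both endpoints of $S_1$ or its middle vertex, which is incompatible with $\b_1\in S_1'$ being an endpoint together with $S_1'\cap S_2=\leer$ (the endpoint of $S_1$ lying in $B\subseteq S_2$ cannot be decorated). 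Hence $\cD_1=\Ca_{m_1}$ and $S_1'=\{\b_1\}$, and symmetrically $\cD_2=\Ca_{m_2}$ and $S_2'=\{\b_2\}$.

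Finally I would read off the gluing: since $S_i'=\{\b_i\}$ is a single vertex, \cref{def:prim5} gives $S_1=C(\{\b_1\},S\setminus\{\b_2\})=S\setminus\{\b_2\}$ and $S_2=S\setminus\{\b_1\}$, so $A=\{\b_1\}$, $C=\{\b_2\}$ and $B=S\setminus\{\b_1,\b_2\}=S_0^p$; then $S_0^p\ne\leer$ gives $n\ge2$, the chosen edge shows $\b_1$ and $\b_2$ are adjacent in the cycle, and $c_1=c_2=i$ since $\Ca$-type models are inhomogeneous (condition \ref{it:prim5-3}\ref{it:prim5-33} holds vacuously because all colabels of $\sA_n^{(1)}$ are $1$). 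This is exactly the diagram $\sA_{n\ge2}^{(1)}(\Ca\Ca)$. The step I expect to demand the most care is the identification via \cref{table:local}: checking that the only connected, simply-laced, path-shaped local diagrams are $a_m,\Ca_m,d_3,\2 d_3$, that the arc bookkeeping genuinely rules out all of these except $\Ca_m$, and that the boundary conventions $b_1=a_1$ and $\Cc_1=\Ca_1$ introduce no extra cases.
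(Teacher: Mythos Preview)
Your proof is correct and follows essentially the same route as the paper's: use the path from $\b_1$ to a point of $S_0^p$ inside $S_1$, the path from $\b$ to $\b_2$ inside $S_2$, and the edge $\b_1\b_2$ to force a cycle in $S$, hence $S=\sA_n^{(1)}$; then use the table of local models to pin down both $\cD_i$ as $\Ca$-type. Your version is more thorough than the paper's in two places: you explicitly rule out the simply-laced path models $d_3$ and $\tfrac12 d_3$ (which the paper's sentence ``therefore the local diagrams are either of type $a_m$ or $\Ca_m$'' passes over), and you do the arc bookkeeping carefully to check that one endpoint of $S_i$ must lie in $S_0^p$, which is what kills $a_m$.
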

\begin{proof}
  Let $\a\in S_0^p$ and let there be an edge between $\a_1\in S_1'$
  and $\a_2\in S_2'$. Inspection of \cref{table:local} implies that
  there are paths from $\a_1$ and $\a_2$ to $\a$
  respectively. Together with the edge they form a cycle in $S$ which
  implies that $S$ is of type $\sA_{n\ge2}^{(1)}$ with $\a_1,\a_2$ being
  adjacent. Therefore the local diagrams are either of type $a_m$ or
  $\Ca_m$. Since only diagrams of the latter type can be glued such
  that $S_0^p\ne\leer$ and $S$ is a cycle we get
  $\sA_{n\ge2}^{(1)}(\Ca\Ca)$ as the only possibility.
\end{proof}
Thus, we may assume from now on that $S$ is the union of $S_1$ and
$S_2$ minimally glued along $S_0^p$, i.e., with no further edges
added. To classify these diagrams we proceed by the type of
$S_0^p$. Helpful is the following table which lists for all isomorphy
types of $S_0^p$ the possible candidates for $S_1$ and $S_2$. The
factor $c$ is suppressed.
\begin{lis}\label{table:S0p}{Gluing data}
\begin{longtable}%
{|>{$}l<{$}|>{$}l<{$}>{$}l<{$}>{$}l<{$}>{$}l<{$}>{$}l<{$}>{$}l<{$}>{$}l<{$}>{$}l<{$}|}
    \hline
    S_0^p &\multicolumn{8}{l|}{Candidates for $S_1$ and $S_2$}\\
    \hline
    \sA_1&a_3&\Ca_2&b_2&c_{n\ge3}&c_3&d_3&g_2&\Cc_2\\
    \sA_2&a_4&\Ca_3&b'_3&&&&&\\
    \sA_3&a_5&\Ca_4&d_4&&&&&\\
    \sA_{n\ge4}&a_{n+2}&\Ca_{n+1}&&&&&&\\
    \hline
    \sB_2&b_3&c_4&\Cc_3&&&&&\\
    \sB_3&b_4&f_4&&&&&&\\
    \sB_{n\ge4}&b_{n+1}&&&&&&&\\
    \hline
    \sC_{n\ge3}&c_{n+2}&\Cc_{n+1}&&&&&&\\
    \hline
    \sD_4&d_5&&&&&&&\\
    \sD_{n\ge5}&d_{n+1}&&&&&&&\\
    \hline
    \sA_1\sA_1&c_3&d_3&&&&&&\\
    \sA_1\sC_{n\ge2}&c_{n+2}&&&&&&&\\
    \hline
  \end{longtable}
\end{lis}
\begin{rem} For $c_3$, the graph $S\setminus S'$ consists of two
  $\sA_1$-components. Therefore $c_3$ is listed twice in the
  $\sA_1$-row. Also the case $S_0^p=\sD_4$ is listed separately since
  $\sD_4$ has automorphisms which don't extend to $\sD_5$.
\end{rem}
Using the table, it easy to find all primitive triples with
$S_0^p\ne\leer$. Since the case-by-case considerations are lengthy we
just give an instructive example namely where $S_0^p=\sA_2$. Here the
following nine combinations have to be considered:
\[
  a_4\cup a_4,\ a_4\cup\Ca_3,\ \Ca_3\cup\Ca_3 (2\times),\ a_4\cup b_3',\
  \Ca_3\cup b_3' (2\times),\ b_3'\cup b_3' (2\times).
\]
We omitted the possibility of a factor of $\3$ for
$b_3'$. Observe that in three cases two different gluings are
possible. It turns out that all cases lead to a valid spherical
diagram (namely $\sD_5^{(1)}(aa)$, $\sD_5(a\Ca)$, $\sD_4(\Ca\Ca)$, $\sA_4(\Ca\Ca')$, $\sB_4(b'\Ca)$ and $\sD_3^{(2)}(b'b')$ in the notation of \cref{ListHom}) except for $a_4\cup b_3'$ and one of the gluings of
$\Ca_3\cup b_3'$ which do not lead to affine Dynkin diagrams.

\subsection{The case $S_0^p=\leer$}
Here, according to \cref{lem:SSSS}, $S$ is the disjoint union of $S_1$
and $S_2$ stitched together with edges between $S_1'$ and $S_2'$.

A rather trivial subcase is when $S_2=S_2'=\leer$. Then $\cD$ is just a local diagram all of which appear in \cref{ListHom}.

Therefore, assume from now on that $S_1,S_2\ne\leer$. Since then
$1\le|S_i'|\le 2$, the number $N$ of edges between $S_1'$ and $S_2'$
is at most $4$. This yields $5$ subcases.

\subsubsection{$N=0$}\label{sss:0}
In this case, $S$ is the disconnected union of $S_1$ and $S_2$. If the
triple were bihomogeneous we cannot have $\w_{\cD_1}+\w_{\cD_2}=0$. If
$\cD_1$ were homogeneous and $\cD_2$ inhomogeneous then
$\<\w_{\cD_1},\a_{\cD_2}^\vee\>=0\ne-1$. Therefore, the triple is
bi-inhomogeneous and we get the three items
$\sA_{m\ge1}\times\sA_{n\ge1}$, $\sA_{m\ge1}\times\sC_{n\ge2}$, and
$\sC_{m\ge2}\times\sC_{n\ge2}$ near the end of the table.
\subsubsection{$N=1$}\label{sss:1}
In this case the diagram $\cD$ is the disjoint union of two local
diagrams connected by one edge between some $\a_1\in S_1'$ and
$\a_2\in S_2'$. One can now go through all pairs of local diagrams and
all possibilities for the connecting edge. This works well if one or
both local diagrams are of type $(d_2)$. Otherwise, it is easier to go
through all possible connected Dynkin diagrams for $S$ and omit one of
its edges. The remaining diagram admits very few possibilities for
$\cD_1$ and $\cD_2$. This way one can check easily that the table is
complete with respect to this subcase.  Let's look, e.g., at
$S=\sF_4^{(1)}$
  \[
 \begin{picture}(7200,1100)(0,-300)
 \put(1800,0){\usebox\dynkinffour}
 \put(0,0){\usebox\dynkinatwo}
 \put(-200,300){\tiny1}
 \put(1600,300){\tiny2}
 \put(3400,300){\tiny3}
 \put(5200,300){\tiny4}
 \put(6900,300){\tiny2}
\end{picture}
\]
Omitting one edge yields
\[
    \begin{picture}(7200,1100)(0,-300)
 \put(1800,0){\usebox\dynkinffour}
 \put(0,0){\usebox\dynkinaone}
 \put(0,0){\circle{600}}
 \put(1800,0){\circle{600}}
\end{picture}\qquad
    \begin{picture}(7200,1100)(0,-300)
 \put(5400,0){\usebox\dynkinatwo}
 \put(3600,0){\usebox\dynkinbtwo}
 \put(0,0){\usebox\dynkinatwo}
 \put(1800,0){\circle{600}}
 \put(3600,0){\circle{600}}
\end{picture}\qquad
    \begin{picture}(7200,1100)(0,-300)
 \put(5400,0){\usebox\dynkinatwo}
 \put(0,0){\usebox\dynkinathree}
 \put(3600,0){\circle{600}}
 \put(5400,0){\circle{600}}
\end{picture}\qquad
    \begin{picture}(7200,1100)(0,-300)
 \put(7200,0){\usebox\dynkinaone}
 \put(0,0){\usebox\dynkinbfour}
 \put(5400,0){\circle{600}}
 \put(7200,0){\circle{600}}
\end{picture}
\]  
Each component has to support a local diagram such that the circled
vertex is in $S'$. This rules out all cases except the third one
where the local diagrams could be of type $a_n$ or $\Ca_n$. This
yields
\[
 \begin{picture}(7200,1100)(0,-300)
 \put(1800,0){\usebox\dynkinffour}
 \put(0,0){\usebox\dynkinatwo}
 \put(3250,-300){\symb}
 \put(5050,-300){\symb}
\end{picture}\qquad
 \begin{picture}(7200,1100)(0,-300)
 \put(1800,0){\usebox\dynkinffour}
 \put(0,0){\usebox\dynkinatwo}
\put(3250,-300){\symb}
\put(5400,0){\usebox\atwo}
\end{picture}\qquad
 \begin{picture}(7200,1100)(0,-300)
 \put(1800,0){\usebox\dynkinffour}
 \put(0,0){\usebox\dynkinatwo}
\put(0,0){\usebox\athree}
 \put(5050,-300){\symb}
\end{picture}\qquad
 \begin{picture}(7200,1100)(0,-300)
 \put(1800,0){\usebox\dynkinffour}
 \put(0,0){\usebox\dynkinatwo}
\put(0,0){\usebox\athree}
\put(5400,0){\usebox\atwo}
\end{picture}
\]
The first diagram violates \cref{def:prim5}\ref{it:prim5-3}\ref{it:prim5-33}, the second is primitive and is contained in \cref{ListHom}, the third violates \ref{it:prim5-3}\ref{it:prim5-32}, and the fourth violates \ref{it:prim5-3}\ref{it:prim5-31}.
\subsubsection{$N=2$}\label{sss:2}
In this case, at least one of the $S_i'$, say $S_1'$, has two different
elements $\a_1,\a_1'$ which are connected to elements
$\a_2,\a_2'\in S_2'$, respectively.

Assume first that $\a_2\ne\a_2'$. Then both local models are either
of type $a_{n\ge2}$ or $d_2$. One checks easily that this yields
the cases
\[
\sA_{n\ge3}^{(1)}(aa),\ 
\sC_{n\ge3}^{(1)}(ad),\ 
\sD_{n+1}^{(2)}(ad),\ 
\sA_1^{(1)}\times \sA_1^{(1)}(dd),\text{ or }
\sA_2^{(2)}\times \sA_2^{(2)}(dd).
\]
The second subcase is $\a_2=\a_2'$. If $S_1$ is of type $a_{n\ge2}$
one ends up with $\sA_{n\ge2}^{(1)}(aa)$, $d=1$. Otherwise $S_1$ is of
type $d_2$. Now one can go through all local diagrams for $S_2$ and
all possible edges between $\a_1,\a_1'$ and $\a_2\in S_2'$.
  
\subsubsection{$N=3$}\label{sss:3}
In this case, there are distinct elements $\a_1,\a_1'\in S_1'$ and
$\a_2,\a_2'\in S_2'$ which form a string
$\a_1,\a_2,\a_1',\a_2'$. Both local diagrams are of type $d_2$ which
leaves only the case $\sD_4^{(2)}(dd)$.
\subsubsection{$N=4$}\label{sss:4}
Here $\a_1,\a_2,\a_1',\a_2'$ form a cycle, so $S$ is of type
$\sA_3^{(1)}$. The local diagrams are both $d_2$. This yields
$\sA_3^{(1)}(dd)$.

This concludes the verification of the table.
\newpage

\section{Tables}
\begin{lis}\label{table:Dynkin}{Affine Dynkin diagrams and their labels}
\begin{longtable}{ll}
  $\sA_1^{(1)}$ &
  \boxit{\begin{picture}(1800,900)
 \put(0,0){\usebox\leftrightbiedge}
 \multiput(-300,300)(1800,0){2}{\tiny1}
\end{picture}} \\
$\sA_n^{(1)}, n\ge 2$ &
\boxit{\begin{picture}(9000,1800)
    \thicklines
    \put(0,0){\usebox{\dynkinathree}}
  \put(3600,0){\usebox\shortsusp}
  \put(5400,0){\usebox\dynkinathree}
  \put(4500,1800){\usebox\vertex}
  \line(4500,1800){4500}
  \put(4500,0){\line(-4500,1800){4500}}
  \multiput(-4750,300)(1800,0){6}{\tiny1}
  \put(-250,1000){\tiny1}
\end{picture}} \\
$\sB_n^{(1)}, n\ge 3$&
\boxit{\begin{picture}(9000,2700)(0,-1800)
\put(0,0){\usebox\dynkinathree}
\put(3600,0){\usebox\shortsusp}
\put(5400,0){\usebox\dynkinbthree}
\put(1800,0){\usebox\vedge}
\put(-150,300){\tiny1}
\put(1100,-1650){\tiny1}
\multiput(1600,300)(1800,0){5}{\tiny2}
\end{picture}}\\
$\sC_n^{(1)}, n \ge 2$ &
\boxit{\begin{picture}(7200,1150)(0,-300)
  \put(0,0){\usebox\rightbiedge}
  \put(1800,0){\usebox\susp}
  \put(5400,0){\usebox\leftbiedge}
  \put(-200,300){\tiny 1}
  \put(7000,300){\tiny 1}
  \multiput(1600,300)(3600,0){2}{\tiny 2}
\end{picture}} \\
$\sD_n^{(1)}, n\ge 4$&
\boxit{\begin{picture}(10800,3800)(0,-1900)
 \put(0,0){\usebox\dynkinathree}
 \put(3600,0){\usebox\susp}
 \put(7200,0){\usebox\dynkinathree}
 \put(1800,0){\usebox\vedge}
 \put(9000,1800){\usebox\vedge}
 \put(9000,1800){\usebox\vertex}
 \put(1950,-1650){\tiny1}
 \put(9200,1300){\tiny1}
 \put(-200,300){\tiny1}
 \put(10600,300){\tiny1}
 \multiput(1600,300)(1800,0){2}{\tiny2}
 \put(7000,300){\tiny2}
 \put(8500,300){\tiny2}
\end{picture}} \\
$\sE_6^{(1)}$ &
\boxit{\begin{picture}(7200,4500)(0,-3600)
 \put(0,0){\usebox\dynkinesix}
 \thicklines
 \put(3600,-1800){\line(0,-1){1800}}
 \put(3600,-3600){\usebox{\vertex}}
 \put(-300,300){\tiny1}
\put(1500,300){\tiny2}
\put(3300,300){\tiny3}
\put(5100,300){\tiny2}
\put(6900,300){\tiny1}
\put(3000,-1500){\tiny2}
\put(3000,-3300){\tiny1}
\end{picture}}\\
$\sE_7^{(1)}$ &
\boxit{\begin{picture}(10800,2700)(0,-1800)
    \put(0,0){\usebox\dynkinatwo}
    \put(1800,0){\usebox\dynkineseven}
 \thicklines
 \put(-300,300){\tiny1}
 \put(1500,300){\tiny2}
 \put(3300,300){\tiny3}
 \put(5100,300){\tiny4}
 \put(4800,-1500){\tiny2}
 \put(6900,300){\tiny3}
 \put(8700,300){\tiny2}
 \put(10500,300){\tiny1}
\end{picture}} \\
$\sE_8^{(1)}$ &
\boxit{\begin{picture}(12600,2700)(0,-1800)
 \put(0,0){\usebox\dynkineeight}
\thicklines
\put(10800,0){\line(1,0){1800}}
\put(12600,0){\usebox\vertex}
\put(-300,300){\tiny 2}
\put(1500,300){\tiny4}
\put(3300,300){\tiny6}
\put(5100,300){\tiny5}
\put(6900,300){\tiny4}
\put(8700,300){\tiny3}
\put(10600,300){\tiny2}
\put(12300,300){\tiny1}
\put(3000,-1500){\tiny3}
\end{picture}}\\
$\sF_4^{(1)}$&
\boxit{\begin{picture}(7200,1100)(0,-300)
 \put(1800,0){\usebox\dynkinffour}
 \put(0,0){\usebox\dynkinatwo}
 \put(-200,300){\tiny1}
 \put(1600,300){\tiny2}
 \put(3400,300){\tiny3}
 \put(5200,300){\tiny4}
 \put(6900,300){\tiny2}
\end{picture}}\\
$\sG_2^{(1)}$ &
\boxit{\begin{picture}(3600,1400)(0,-500)
 \put(0,0){\usebox\dynkinathree}
 \multiput(1800,150)(0,-300){2}{\line(1,0){1800}}
 \put(3300,0){\line(-1,1){500}}
  \put(3300,0){\line(-1,-1){500}}
  \put(-200,300){\tiny1}
  \put(1600,300){\tiny2}
  \put(3400,300){\tiny3}
\end{picture}} \\
$\sA_2^{(2)}$ &
\boxit{\begin{picture}(1800,1350)(0,-500)
\multiput(0,0)(1800,0){2}{\circle*{300}}
\multiput(0,-67)(0,133){2}{\line(1,0){1800}}
\multiput(0,-200)(0,400){2}{\line(1,0){1800}}
\multiput(150,0)(25,25){20}{\circle*{50}}
\multiput(150,0)(25,-25){20}{\circle*{50}}
\put(-200,300){\tiny 2}
\put(1600,300){\tiny 1}
\end{picture}} \\
$\sA_{2n}^{(2)}, n\ge 2$&
\boxit{\begin{picture}(7200,1200)(0,-300)
 \put(0,0){\usebox\vertex}
  \put(0,0){\usebox\leftbiedge}
  \put(1800,0){\usebox\susp}
  \put(5400,0){\usebox\leftbiedge}
  \multiput(-200,300)(1800,0){2}{\tiny 2}
  \put(5200,300){\tiny 2}
  \put(7000,300){\tiny 1}
 \end{picture}}\\
 $\sA_{2n-1}^{(2)}, n\ge 3$ &
 \boxit{\begin{picture}(10800,2700)(0,-1800)
 \put(0,0){\usebox\dynkinathree}
 \put(3600,0){\usebox\susp}
 \put(7200,0){\usebox\dynkinatwo}
 \put(9000,0){\usebox\leftbiedge}
 \put(1800,0){\usebox\vedge}
 \put(1950,-1650){\tiny1}
 \put(-200,300){\tiny1}
 \put(10600,300){\tiny1}
 \multiput(1600,300)(1800,0){2}{\tiny2}
 \put(7000,300){\tiny2}
 \put(8800,300){\tiny2}
\end{picture}} \\
$\sD_{n+1}^{(2)}, n\ge 2$ &
\boxit{\begin{picture}(7200,1200)(0,-350)
  \put(0,0){\usebox\leftbiedge}
  \put(0,0){\usebox\vertex}
  \put(1800,0){\usebox\susp}
  \put(5400,0){\usebox\rightbiedge}
  \put(-200,300){\tiny 1}
  \put(7000,300){\tiny 1}
  \multiput(1600,300)(3600,0){2}{\tiny 1}
\end{picture}}\\
$\sE_6^{(2)}$&
\boxit{\begin{picture}(7200,1200)(0,-350)
  \put(0,0){\usebox\dynkinathree}
  \put(3600,0){\usebox\leftbiedge}
  \put(5400,0){\usebox\dynkinatwo}
  \put(0,200){\tiny 1}
  \put(1600,300){\tiny 2}
  \put(3400,300){\tiny 3}
  \put(5200,300){\tiny 2}
  \put(7000,300){\tiny 1}
\end{picture}}\\
$\sD_4^{(3)}$ &
\boxit{\begin{picture}(3600,1350)(0,-500)
  \put(0,0){\usebox\dynkinathree}
   \multiput(1800,150)(0,-300){2}{\line(1,0){1800}}
 \put(2100,0){\line(1,1){500}}
  \put(2100,0){\line(1,-1){500}}
  \multiput(-200,300)(3600,0){2}{\tiny 1}
  \put(1600,300){\tiny 2}
 \end{picture}}
\end{longtable}
\end{lis}
\newpage
\begin{lis}\label{table:local}{Primitive local models}
  \begin{longtable}{ll<{\hspace{-1.5mm}}l<{\hspace{-1.5mm}}ll}
    \multicolumn{5}{l}{\bf Homogeneous models}\\
    &$L_0$&$H_0$&$\w$&Diagram\vrule depth 0pt height0pt width25pt\\
    \hline $a_1$&$\PGL(2)$&$\GL(1)$&$\a_1$&
    \boxit{\begin{picture}(600,1800)(-300,-900)
        \put(0,0){\usebox\aone}\end{picture}}\\
    $2a_1$&$\PGL(2)$&$N(\C^*)$&$2\a_1$&
    \boxit{\begin{picture}(600,900)(-300,-900)
        \put(0,0){\usebox\aprime}\end{picture}}\\
    $a_{n\ge2}$&$\PGL(n+1)$&$\GL(n)/\mu_{n+1}$&$\a_1+\ldots+\a_n$&
    \boxit{\begin{picture}(6000,600)(-300,-300)
        \put(0,0){\usebox\mediumam}\end{picture}}\\
    $b_{n\ge2}$&$\SO(2n+1)$&$\SO(2n)$&$\a_1+\ldots+\a_n$&
    \boxit{\begin{picture}(7500,600)(-300,-300)
        \put(0,0){\usebox\shortbm}\end{picture}}\\
    $2b_{n\ge2}$&$\SO(2n+1)$&$\O(2n)$&$2\a_1+\ldots+2\a_n$&
    \boxit{\begin{picture}(7500,1300)(-300,-300)
        \put(0,0){\usebox\shortbprimem}\end{picture}}\\
    $c_{n\ge3}$&$\PSP(2n)$&$\SP(2){\times^{\mu_2}}\SP(2n{-}2)$&$\a_1{+}2\a_2{+}\ldots{+}2\a_{n-1}{+}\a_n$&
    \boxit{\begin{picture}(9000,600)(0,-300)
        \put(0,0){\usebox\shortcm}\end{picture}}\\
    $\2d_{n\ge4}$&$\SO(2n)$&$\SO(2n-1)$&$\a_1{+}\ldots{+}\a_{n{-}2}{+}\2\a_{n{-}1}{+}\2\a_n$&
    \boxit{\begin{picture}(7000,2400)(-300,-1200)
        \put(0,0){\usebox\shortdm} \put (-600,500){\tiny$\3$}
      \end{picture}}\\
    $d_{n\ge4}$&$\PSO(2n)$&$\SO(2n-1)$&$2\a_1{+}\ldots{+}2\a_{n{-}2}{+}\a_{n{-}1}{+}\a_n$&
    \boxit{\begin{picture}(7000,2400)(-300,-1200)\put(0,0){\usebox\shortdm}\end{picture}}\\
    $\2d_2$&$\SO(4)$&$\SO(3)$&$\a+\a'$&
    \boxit{\begin{picture}(2400,1800)(-300,-300)
        \put(0,0){\circle{600}}
        \put(0,0){\usebox{\vertex}}
        \put(1800,0){\circle{600}}
        \put(1800,0){\usebox{\vertex}}
        \multiput(0,300)(1800,0){2}{\line(0,1){300}}
        \put(0,600){\line(1,0){1800}}
        \put(200,900){\tiny $\3$}
      \end{picture}}
    \\
    $d_2$&$\SO(3)\times\SO(3)$&$\SO(3)$&$\2\a+\2\a'$&
    \boxit{\begin{picture}(2400,900)(-300,-300)
        \put(0,0){\circle{600}}
        \put(0,0){\usebox{\vertex}}
        \put(1800,0){\circle{600}}
        \put(1800,0){\usebox{\vertex}}
        \multiput(0,300)(1800,0){2}{\line(0,1){300}}
        \put(0,600){\line(1,0){1800}}
      \end{picture}}
    \\
    $\2d_3$&$\SO(6)$&$\SO(5)$&$\2\a_1+\a_2+\2\a_3$&
    \boxit{\begin{picture}(3600,1500)(0,-300)
        \put(0,0){\usebox{\dynkinathree}}
        \put(1800,0){\circle*{600}} \put(1200,600){\tiny $\3$}
      \end{picture}}
    \\
    $d_3$&$\PSO(6)$&$\SO(5)$&$\a_1+2\a_2+\a_3$&
    \boxit{\begin{picture}(3600,600)(0,-300) \put(0,0){\usebox{\dynkinathree}}
        \put(1800,0){\circle*{600}}
      \end{picture}}
    \\
    $f_4$&$\sF_4$&$\SPIN(9)$&$\a_1+2\a_2+3\a_3+2\a_4$&
    \boxit{\begin{picture}(5700,600)(0,-300)\put(0,0){\usebox\ffour}\end{picture}}\\
    $g_2$&$\sG_2$&$SL(3)$&$2\a_1+\a_2$&
    \boxit{\begin{picture}(2100,600)(-300,-300)\put(0,0){\usebox\gtwo}\end{picture}}\\
    $2g_2$&$\sG_2$&$N(SL(3))$&$4\a_1+2\a_2$&
    \boxit{\begin{picture}(2100,1300)(-300,-300)\put(0,0){\usebox\gprimetwo}\end{picture}}\\
    $\2b_3'$&$\SPIN(7)$&$\sG_2$&$\2\a_1+\a_2+\frac32\a_3$&
    \boxit{\begin{picture}(3900,1400)(0,-300)
        \put(0,0){\usebox{\bthirdthree}} \put (2900,500){\tiny$\3$}
      \end{picture}}
    \\
    $b_3'$&$\SO(7)$&$\sG_2$&$\a_1+2\a_2+3\a_3$&
    \boxit{\begin{picture}(3900,600)(0,-300)\put(0,0){\usebox\bthirdthree}\end{picture}}\\
    \noalign{\vspace{10pt}}
    \multicolumn{5}{l}{\bf Inhomogeneous models}\\
    &$L_0$&$V_0$&$\w$&Diagram\\
    \hline
    $\Ca_0$&$\GL(1)$&$\C$&$\sim0$&\boxit{$\leer$}\\
    $\Ca_{n\ge1}$&$\GL(n+1)$&$\C^{n+1}$&$\sim\pi_1$&
    \boxit{\begin{picture}(9300,600)(-300,-300)
        \put(0,0){\usebox\dynkinatwo} \put(1800,0){\usebox\susp}
        \put(5400,0){\usebox\dynkinathree} \put(-350,-300){$\btr$}
      \end{picture}}
    \\
    $\Cc_{n\ge2}$&$\GSP(2n)$&$\C^{2n}$&$\sim\pi_1$&
    \boxit{\begin{picture}(9300,600)(-300,-300)
        \put(0,0){\usebox\dynkinatwo}
        \put(1800,0){\usebox\susp}
        \put(5400,0){\usebox\dynkincthree}
        \put(-350,-300){$\btr$}
      \end{picture}}
    \\
  \end{longtable}
\end{lis}
\newpage
\begin{lis}\label{ListHom}{Primitive diagrams}
      \tiny{
        \begin{longtable}{|c|>{\hspace{-1mm}}l<{\hspace{-2mm}}|p{55mm}Z|>{\hspace{-4pt}}c<{\hspace{-5pt}\vrule width 0pt}|l|p{15mm}D|}
          \hline
          $\Phi$&case&$\w=\overline\w_1=-\overline\w_2$ &$t$&factor& diagram&scope&\\\hline\endhead
          \noalign{\begin{center}\bf The empty case\end{center}}
\hline
          $\leer$&$(\leer)$&$\w\ne0$&&&$\leer$&&\ref{table:local}\\
\hline
\noalign{\begin{center}\bf The affine simple cases\end{center}}
\hline
          $\sA_{n\ge1}^{(1)}$&$(aa)$&\W\a_0+\ldots+\a_{d-1}|\a_d+\ldots+\a_n|&1&
          $[2]_{n=1}$ &
          \boxit{\begin{picture}(9900,1800)(-450,-900)
            \thicklines
            \multiput(0,0)(5400,0){2}{\usebox{\shortam}}
            \put(3600,0){\usebox{\edge}}
            \multiput(-450,0)(9250,0){2}{\line(1,0){600}}
            \multiput(-400,0)(9800,0){2}{\line(0,-1){900}}
            \put (-450,-900){\line(1,0){9850}}
            \put(-600,600){\tiny$\a_0$}
            \put(5000,600){\tiny$\a_d$}
          \end{picture}} &$1\le d\le n$ &\ref{sss:2}/\ref{sss:1}\\
&$(dd_2)$&\W\w_1=\a_0+\a_2|\w_2=\a_1+\a_3|&1&
          $\2$&\boxit{\begin{picture}(2400,2400)(-300,-300)
            \multiput(0,0)(0,1800){2}{\usebox{\edge}}
            \multiput(0,1800)(1800,0){2}{\usebox{\vedge}}
            \put(0,0){\usebox{\wcircle}}
            \put(1800,0){\usebox{\wcircle}}
            \put(0,1800){\usebox{\wcircle}}
            \put(1800,1800){\usebox{\wcircle}}
            \put(200,200){\line(1,1){1400}}
            \put(200,1600){\line(1,-1){1400}}
          \end{picture}}& &\ref{sss:4}\\
          &$(dd_3)$&\W\a_0+2\a_1+\a_2|\a_2+2\a_3+\a_0|&2&
          $\2$&\boxit{\begin{picture}(2400,2400)(-300,-300)
            \multiput(0,0)(0,1800){2}{\usebox{\edge}}
            \multiput(0,1800)(1800,0){2}{\usebox{\vedge}}
            \put(0,0){\circle*{600}} \put(1800,1800){\circle*{600}}
          \end{picture}}&&\ref{ss:nl}\\
          &$(\Ca\Ca)$&\WWW\pi_0-\pi_n|-\pi_0+\pi_n|&&&
          \boxit{\begin{picture}(9900,1200)(-450,-900)
            \thicklines
            \multiput(0,0)(5400,0){2}{\usebox{\dynkinathree}}
            \put(3600,0){\usebox{\shortsusp}}
            \multiput(-450,0)(9250,0){2}{\line(1,0){600}}
            \multiput(-400,0)(9800,0){2}{\line(0,-1){900}}
            \put (-450,-900){\line(1,0){9850}}
            \put(-350,-300){$\btr$} \put(8650, -300){$\btl$}
          \end{picture}}
          &&\ref{ss:nl}/\ref{sss:1}\\
          \hline
          $\sB_{n\ge3}^{(1)}$
          &$(bd)$&\W\a_0+\a_1+2\a_2+\ldots+2\a_{d-1}|2\a_d+\ldots+2\a_n|
          &1&$\2$&
          \boxit{\begin{picture}(10800,3000)(0,-1800)
            \thicklines
            \multiput(0,0)(1800,0){2}{\usebox{\vertex}}
            \multiput(1800,-1800)(1800,1800){2}{\usebox{\vertex}}
            \multiput(0,0)(1800,0){2}{\line(1,0){1800}}
            \put(1800,0){\line(0,-1){1800}}
            \put(3600,0){\usebox{\shortsusp}}
            \put(5400,0){\usebox{\edge}}
            \put(7200,0){\usebox{\shortsusp}}
            \put(9000,0){\usebox{\rightbiedge}}
            \put(7000,-1000){\tiny$\a_d$} \put(5400,0){\circle*{600}}
            \put(7200,0){\circle*{600}} \put(7000,600){\tiny 2}
          \end{picture}}&$2\le d\le n$ &\ref{sss:1}/\ref{sss:2}\\
          &$(bb)$&\W\a_1+\a_2+\ldots+\a_n|\a_0+\a_2+\ldots+\a_n|&1&$2$
          &\boxit{\begin{picture}(7600,2400)(-300,-2100)
            \thicklines
            \multiput(0,0)(1800,0){2}{\usebox{\vertex}}
            \put(1800,-1800){\usebox{\vertex}}
            \put(0,0){\line(1,0){1800}}
            \put(1800,0){\line(0,-1){1800}}
            \put(1800,0){\usebox{\susp}}
            \put(5400,0){\usebox{\rightbiedge}}
            \put(0,0){\circle*{600}} \put(1800,-1800){\circle*{600}}
          \end{picture}}&&\ref{ss:nl}\\
          &$(b'\Ca)$&\WW\2(\a_1+2\a_2+3\a_3)|\pi_0-\pi_3|&&&
          \boxit{\begin{picture}(4400,3200)(-100,-2000)
            \thicklines \put(0,0){\usebox\dynkinbthree}
            \put(1800,0){\usebox\vedge} \put(3600,0){\circle*{600}}
            \put(1500,-2000){$\bt$} \put(3000,600){\tiny $\3$}
          \end{picture}}&&\ref{ss:nl}\\
          &$(d\Ca)$&\WW\2(\a_1+\a_3)|-\pi_1+\pi_2-\pi_3|&&&
          \boxit{\begin{picture}(4300,3500)(-300,-1900)
               \put(0,0){\circle{600}}
            \put(3600,0){\circle{600}}
            \multiput(0,300)(3600,0){2}{\line(0,1){300}}
            \put(0,600){\line(1,0){3600}}
            \thicklines
            \put(0,0){\usebox\dynkinbthree}
            \put(1800,0){\usebox\vedge}
            \put(1400,800){\tiny$\3$}
            \put(1450,-300){$\btd$}
          \end{picture}} & &\ref{sss:2}\\
          \hline
          $\sC_{n\ge2}^{(1)}$
          &$(cc)$&\W\a_0+2\a_1+\ldots+2\a_{d-1}+\a_d|
          \a_d+2\a_{d+1}+\ldots+2\a_{n-1}+\a_n|&1&$[2]_{n=2}$
          &\boxit{\begin{picture}(11000,1200)(-100,-300)
            \put(5000,600){\tiny$\a_d$}
            \put(0,0){\usebox{\rightbiedge}}
            \put(9000,0){\usebox{\leftbiedge}}
            \put(1800,0){\usebox{\shortsusp}}
            \put(3600,0){\usebox{\dynkinathree}}
            \put(7200,0){\usebox{\shortsusp}}
            \multiput(3600,0)(3600,0){2}{\circle*{600}}
          \end{picture}}
          &$1\le d<n$& \\
          &$(ad)$&\W\2(\a_0+\a_n)|\a_1+\ldots+\a_{n-1}|
          &$\2$&$[2]_{n=2}$&
          \boxit{\begin{picture}(7800,1700)(-300,-1300) \put(3000,
            -1000){\tiny \3} \put(0,0){\usebox{\rightbiedge}}
            \put(5400,0){\usebox{\leftbiedge}}
            \put(1800,0){\usebox{\shortam}}
            \multiput(0,0)(7200,0){2}{\usebox{\wcircle}}
            \multiput(0,-250)(7200,0){2}{\line(0,-1){950}}
            \put(0,-1200){\line(1,0){7200}}
          \end{picture}}
          & &\ref{sss:2}\\
          &$(\Cc\Cc)$&\WWW\pi_{d-1}-\pi_d|-\pi_{d-1}+\pi_d|&&&
          \boxit{\begin{picture}(9200,1300)(-100,-300)
            \put(0,0){\usebox{\rightbiedge}}
            \put(7200,0){\usebox{\leftbiedge}}
            \put(1800,0){\usebox{\shortsusp}}
            \put(3600,0){\usebox\dynkinatwo}
            \put(5400,0){\usebox\shortsusp}
            \put(3400,-300){$\btl$}
            \put(5199,-300){$\btr$} \put(5000,600){\tiny $\a_d$}
          \end{picture}}
          &$1 \le d \le n$ &\ref{sss:1}\\
          \hline
          $\sD_{n\ge4}^{(1)}$
          &$(dd)$&\W\a_0+\a_1+2\a_2+\ldots+2\a_{d-1}|2\a_d+\ldots+
          2\a_{n-2}+\a_{n-1}+\a_n|&1&
          $\2$&\boxit{\begin{picture}(11550,2600)(-1300,-1300)
            \thicklines
            \put(5000,600){\tiny$\a_d$} \put(0,0){\usebox{\vertex}}
            \multiput(-1200,1200)(0,-2400){2}{\usebox{\vertex}}
            \put(-1200,-1200){\line(1,1){1200}}
            \put(-1200,1200){\line(1,-1){1200}}
            \put(0,0){\usebox{\shortsusp}}
            \put(1800,0){\usebox{\dynkinafour}}
            \put(7200,0){\usebox{\shortsusp}}
            \put(9000,0){\usebox{\bifurc}}
            \multiput(3600,0)(1800,0){2}{\circle*{600}}
          \end{picture}}&\hbox to 0pt{$2\le d \le n-1$\hss}&\ref{sss:1}/\ref{sss:2}\\
          &$(dd')$&\W2\a_1+2\a_2+\ldots+2\a_{n-2}+
          \a_{n-1}+\a_n|2\a_0+2\a_2+\ldots+2\a_{n-2}+\a_{n-1}+\a_n|&2&$\2$&
          \boxit{\begin{picture}(6400,3000)(-1500,-1500)
            \thicklines
            \put(0,0){\usebox{\vertex}}
            \multiput(-1200,1200)(0,-2400){2}{\usebox{\vertex}}
            \put(-1200,-1200){\line(1,1){1200}}
            \put(-1200,1200){\line(1,-1){1200}}
            \put(000,0){\usebox{\susp}} \put(3600,0){\usebox{\bifurc}}
            \multiput(-1200,1200)(0,-2400){2}{\circle*{600}}
          \end{picture}} &&\ref{ss:nl}\\
          &$(aa)$&\W\a_1+\a_2+
          \ldots+\a_{n-2}+\a_{n-1}|\a_0+\a_2+\ldots+\a_{n-2}+
          \a_n|&1&&\boxit{\begin{picture}(6600,3000)(-1500,-1500)
            \multiput(-1200,1200)(0,-2400){2}{\circle{600}}
            \multiput(4800,1200)(0,-2400){2}{\circle{600}}
            \multiput(-1200,-1200)(25,0){20}{\circle*{70}}
            \multiput(-700,-1200)(0,25){12}{\circle*{70}}
            \multiput(-700,-900)(25,0){12}{\circle*{70}}
            \multiput(-400,-900)(0,25){12}{\circle*{70}}
            \multiput(-400,-600)(25,0){12}{\circle*{70}}
            \multiput(-100,-600)(0,25){12}{\circle*{70}}
            \multiput(-100,-300)(25,0){12}{\circle*{70}}
            \multiput(200,-300)(25,25){8}{\circle*{70}}
            \multiput(400,-100)(25,-25){8}{\circle*{70}}
            \multiput(600,-300)(25,25){8}{\circle*{70}}
            \multiput(800,-100)(25,-25){8}{\circle*{70}}
            \multiput(1000,-300)(25,25){8}{\circle*{70}}
            \multiput(1200,-100)(25,-25){8}{\circle*{70}}
            \multiput(1400,-300)(25,25){8}{\circle*{70}}
            \multiput(1600,-100)(25,-25){8}{\circle*{70}}
            \multiput(1800,-300)(25,25){8}{\circle*{70}}
            \multiput(2000,-100)(25,-25){8}{\circle*{70}}
            \multiput(2200,-300)(25,25){8}{\circle*{70}}
            \multiput(2400,-100)(25,-25){8}{\circle*{70}}
            \multiput(2600,-300)(25,25){8}{\circle*{70}}
            \multiput(2800,-100)(25,-25){8}{\circle*{70}}
            \multiput(3000,-300)(25,25){8}{\circle*{70}}
            \multiput(3200,-100)(25,-25){8}{\circle*{70}}
            \multiput(3400,-300)(25,0){12}{\circle*{70}}
            \multiput(3700,-300)(0,-25){12}{\circle*{70}}
            \multiput(3700,-600)(25,0){12}{\circle*{70}}
            \multiput(4000,-600)(0,-25){12}{\circle*{70}}
            \multiput(4000,-900)(25,0){12}{\circle*{70}}
            \multiput(4300,-900)(0,-25){12}{\circle*{70}}
            \multiput(4300,-1200)(25,0){20}{\circle*{70}}
            \multiput(-1200,+1200)(25,0){20}{\circle*{70}}
            \multiput(-700,+1200)(0,-25){12}{\circle*{70}}
            \multiput(-700,+900)(25,0){12}{\circle*{70}}
            \multiput(-400,900)(0,-25){12}{\circle*{70}}
            \multiput(-400,600)(25,0){12}{\circle*{70}}
            \multiput(-100,600)(0,-25){12}{\circle*{70}}
            \multiput(-100,300)(25,0){12}{\circle*{70}}
            \multiput(200,300)(25,-25){8}{\circle*{70}}
            \multiput(400,100)(25,25){8}{\circle*{70}}
            \multiput(600,300)(25,-25){8}{\circle*{70}}
            \multiput(800,100)(25,25){8}{\circle*{70}}
            \multiput(1000,300)(25,-25){8}{\circle*{70}}
            \multiput(1200,100)(25,25){8}{\circle*{70}}
            \multiput(1400,300)(25,-25){8}{\circle*{70}}
            \multiput(1600,100)(25,25){8}{\circle*{70}}
            \multiput(1800,300)(25,-25){8}{\circle*{70}}
            \multiput(2000,100)(25,25){8}{\circle*{70}}
            \multiput(2200,300)(25,-25){8}{\circle*{70}}
            \multiput(2400,100)(25,25){8}{\circle*{70}}
            \multiput(2600,300)(25,-25){8}{\circle*{70}}
            \multiput(2800,100)(25,25){8}{\circle*{70}}
            \multiput(3000,300)(25,-25){8}{\circle*{70}}
            \multiput(3200,100)(25,25){8}{\circle*{70}}
            \multiput(3400,300)(25,0){12}{\circle*{70}}
            \multiput(3700,300)(0,25){12}{\circle*{70}}
            \multiput(3700,600)(25,0){12}{\circle*{70}}
            \multiput(4000,600)(0,25){12}{\circle*{70}}
            \multiput(4000,900)(25,0){12}{\circle*{70}}
            \multiput(4300,900)(0,25){12}{\circle*{70}}
            \multiput(4300,1200)(25,0){20}{\circle*{70}}
            \thicklines
            \put(0,0){\usebox{\vertex}}
            \multiput(-1200,1200)(0,-2400){2}{\usebox{\vertex}}
            \put(-1200,-1200){\line(1,1){1200}}
            \put(-1200,1200){\line(1,-1){1200}}
            \put(000,0){\usebox{\susp}} \put(3600,0){\usebox{\bifurc}}
          \end{picture}} &&\ref{ss:nl}\\
          \hline
          $\sF_4^{(1)}$&$(bf)$&\W\a_0+\a_1+\a_2+\a_3|\a_1+2\a_2+3\a_3+2\a_4|&1&&
          \boxit{\begin{picture}(7800,600)(-300,-300)
            \put(0,0){\usebox{\dynkinatwo}}
            \put(1800,0){\usebox{\dynkinffour}}
            \put(0,0){\circle*{600}} \put(7200,0){\circle*{600}}
          \end{picture}}
          & &\ref{ss:nl}\\
          &$(cd)$&\W\2\a_0+\a_1+\2\a_2|\a_2+2\a_3+\a_4|&$\2$&&
          \boxit{\begin{picture}(7400,1500)(-100,-300)
            \put(1200,600){\tiny\3} \put(0,0){\usebox{\dynkinatwo}}
            \put(1800,0){\usebox{\dynkinffour}}
            \put(1800,0){\circle*{600}} \put(5400,0){\circle*{600}}
          \end{picture}}
          & &\ref{ss:nl}\\
          &$(a\Ca)$&\WW\a_3+\a_4|\pi_2-\pi_3-\pi_4|&&
          &\boxit{\begin{picture}(7600,600)(-100,-300)
            \put(0,0){\usebox{\dynkinatwo}}
            \put(1800,0){\usebox{\dynkinffour}}
            \put(5400,0){\usebox\atwo} \put(3300,-300){$\btl$}
          \end{picture}}
          & &\ref{sss:1}\\
          \hline
          $\sG_2^{(1)}$&$(g\Ca)$&\WW\a_2+2\a_1|\pi_0-\pi_1|&&&
          \boxit{\begin{picture}(4100,1000)(-250,-500)
            \put(0,0){\usebox\dynkinathree}
            \multiput(1800,200)(0,-400){2}{\line(1,0){1800}}
            \put(3300,0){\line(-1,1){500}}
            \put(3300,0){\line(-1,-1){500}}
            \put(3600,0){\circle*{600}}
            \put(-300,-300){$\btr$}
          \end{picture}}&  &\ref{ss:nl}\\
          &$(a\Ca)$&\WW\a_1|\pi_2-2\pi_1|&&&
          \boxit{\begin{picture}(4000,1800)(-100,-900)
            \put(0,0){\usebox\dynkinathree}
            \multiput(1800,200)(0,-400){2}{\line(1,0){1800}}
            \put(3300,0){\line(-1,1){500}}
            \put(3300,0){\line(-1,-1){500}} \put(3600,0){\usebox\aone}
            \put(1300,-300){$\btl$}
          \end{picture}}&  &\ref{sss:1}\\
          &$(d\Ca)$&
          \WW\2\a_0+\2\a_1|-\pi_0+\pi_2-\pi_1|&&&
          \boxit{\begin{picture}(4100,2300)(-250,-500)
            \put(0,0){\usebox\dynkinathree}
            \multiput(1800,200)(0,-400){2}{\line(1,0){1800}}
            \put(3300,0){\line(-1,1){500}}
            \put(3300,0){\line(-1,-1){500}}
            \multiput(0,0)(3600,0){2}{\circle{600}}
            \multiput(0,300)(3600,0){2}{\line(0,1){600}}
            \put(0,900){\line(1,0){3600}}
            \put(1400,-300){$\bt$} \put(1400,1200){\tiny $\3$}
          \end{picture}}&  &\ref{sss:2}\\
          \hline
          $\underset{n\ge1}{\sA_{2n}^{(2)}}$&$(ab)$&\W2\a_0+2\a_1+
          \ldots+2\a_{n-1}|\a_n|&1&&
          \boxit{\begin{picture}(7600,2000)(-100,-900)
            \multiput(0,0)(5400,0){2}{\usebox{\leftbiedge}}
            \put(1800,0){\usebox{\susp}} \put(0,0){\usebox{\vertex}}
            \put(5400,0){\circle*{600}} \put(5100,600){$\tiny 2$}
            \put(7200,0){\usebox{\aone}}
          \end{picture}}&  &\ref{sss:1}\\
          &$(b\Cc)$&\WW\a_0+\a_1+\ldots+\a_{d-1}|-\pi_{d-1}+\pi_d, \
          (-2\pi_0+\pi_1\text{ for }d=1)|&&&
          \boxit{\begin{picture}(11000,1300)(-100,-300)
            \multiput(0,0)(9000,0){2}{\usebox{\leftbiedge}}
            \put(1800,0){\usebox{\susp}}
            \put(3600,0){\usebox\dynkinatwo}
            \put(5400,0){\usebox\susp} \put(0,0){\usebox{\vertex}}
            \put(3600,0){\circle*{600}}
            \put(5300,-300){$\btr$} \put(5000,600){\tiny $\a_d$}
          \end{picture}} &$1\le d\le n$ &\ref{sss:1}\\
          \hline
          $\underset{n\ge3}{\sA_{2n-1}^{(2)}}$&$(a_1c)$&\W\a_1+2\a_2+
          \ldots+2\a_{n-1}+\a_n|\a_0|&1&&
          \boxit{\begin{picture}(5800,2800)(-2100,-1900)
            \put(-1800,0){\usebox{\edge}} \put(0,0){\usebox{\vedge}}
            \put(0,0){\usebox{\shortsusp}}
            \put(1800,0){\usebox{\leftbiedge}}
            \put(-1800,0){\usebox{\aone}} \put(0,0){\circle*{600}}
          \end{picture}}
          & &\ref{sss:1}\\
          &$(a_3c)$&\W\a_0+\a_1+\a_2|\a_2+2\a_3+\ldots+2\a_{n-1}+\a_n|&1&&
          \boxit{\begin{picture}(7600,2400)(-2100,-2100)
            \put(-1800,0){\usebox{\edge}} \put(0,0){\usebox{\vedge}}
            \put(0,0){\usebox{\edge}}
            \put(1800,0){\usebox{\shortsusp}}
            \put(3600,0){\usebox{\leftbiedge}}
            \put(-1800,0){\circle{600}} \put(0,-1800){\circle{600}}
            \put(1800,0){\circle*{600}}
            \multiput(-1800,0)(25,-25){13}{\circle*{70}}
            \multiput(-1475,-325)(25,25){7}{\circle*{70}}
            \multiput(-1300,-150)(25,-25){7}{\circle*{70}}
            \multiput(-1125,-325)(25,25){7}{\circle*{70}}
            \multiput(-950,-150)(25,-25){7}{\circle*{70}}
            \multiput(-775,-325)(25,25){7}{\circle*{70}}
            \multiput(-600,-150)(25,-25){7}{\circle*{70}}
            \multiput(-425,-325)(25,25){7}{\circle*{70}}
            \multiput(-250,-150)(25,-25){7}{\circle*{70}}
            \multiput(-75,-325)(-25,-25){7}{\circle*{70}}
            \multiput(-250,-500)(25,-25){7}{\circle*{70}}
            \multiput(-75,-675)(-25,-25){7}{\circle*{70}}
            \multiput(-250,-850)(25,-25){7}{\circle*{70}}
            \multiput(-75,-1025)(-25,-25){7}{\circle*{70}}
            \multiput(-250,-1200)(25,-25){7}{\circle*{70}}
            \multiput(-75,-1375)(-25,-25){7}{\circle*{70}}
            \multiput(-250,-1550)(25,-25){12}{\circle*{70}}
          \end{picture}}& &\ref{ss:nl}\\
          &$(ad)$&\W\a_0+\a_1+2\a_2+\ldots+2\a_{n-1}|\a_n|&1&&
          \boxit{\begin{picture}(5800,2800)(-1900,-1900)
            \put(-1800,0){\usebox{\edge}} \put(0,0){\usebox{\vedge}}
            \put(0,0){\usebox{\shortsusp}}
            \put(1800,0){\usebox{\leftbiedge}}
            \put(1800,0){\circle*{600}} \put(3600,0){\usebox{\aone}}
          \end{picture}}&  &\ref{sss:1}\\
          &$(d\Cc)$&\WW\2\a_0+\2\a_1+\a_2+\ldots+\a_{d-1}|
          -\pi_{d-1}+\pi_d,\ (-\pi_0-\pi_1+\pi_2\text{ for }d=2)
          |&&&\boxit{\begin{picture}(11000,3100)(-1900,-1900)
            \put(-1800,0){\usebox{\edge}} \put(0,0){\usebox{\vedge}}
            \put(0,0){\usebox{\edge}}
            \put(1800,0){\usebox{\shortsusp}}
            \put(3600,0){\usebox{\dynkinatwo}}
            \put(5400,0){\usebox{\shortsusp}}
            \put(7200,0){\usebox{\leftbiedge}}
            \put(3600,0){\circle*{600}}
            \put(5100,-300){$\btr$} \put(3000,600){\tiny
              $\3$} \put(5000,600){\tiny $\a_d$}
          \end{picture}}&$2\le d \le n$ &\ref{sss:1}/\ref{sss:2}\\
          &$(\Cc\Cc)$&\WWW\pi_0-\pi_1|-\pi_0+\pi_1|&&&
          \boxit{\begin{picture}(9200,2400)(-2000,-2100)
            \put(-1800,0){\usebox{\edge}} \put(0,0){\usebox{\vedge}}
            \put(0,0){\usebox{\edge}} \put(1800,0){\usebox{\susp}}
            \put(5400,0){\usebox{\leftbiedge}}
            \put(-2100,-300){$\btr$} \put(-400,-2100){$\bt$}
          \end{picture}}& &\ref{ss:nl}\\
         \hline
          $\underset{n\ge2}{\sD_{n+1}^{(2)}}$
          &$(bb)$&\W\a_0+\ldots+\a_{d-1}|\a_d+\ldots+\a_n|&1&$2$&
          \boxit{\begin{picture}(9200,1300)(-100,-300)
            \put(0,0){\usebox{\vertex}}
            \put(0,0){\usebox{\leftbiedge}}
            \put(7200,0){\usebox{\rightbiedge}}
            \multiput(1800,0)(3600,0){2}{\usebox{\shortsusp}}
            \put(3600,0){\usebox{\dynkinatwo}}
            \put(3600,0){\circle*{600}} \put(5400,0){\circle*{600}}
            \put(5000,600){\tiny $\a_d$}
          \end{picture}}
          &  $1\le d\le n$   &\ref{sss:1}\\
          &$(ad)$&\W\a_1+\ldots+\a_{n-1}|\a_0+\a_n|&1&$[\2]_{n=2}$&
          \boxit{\begin{picture}(7800,1500)(-300,-300)
            \put(0,0){\usebox{\vertex}}
            \put(0,0){\usebox{\leftbiedge}}
            \put(5400,0){\usebox{\rightbiedge}}
            \put(1800,0){\usebox{\shortam}}
            \multiput(0,0)(7200,0){2}{\usebox{\wcircle}}
            \multiput(0,250)(7200,0){2}{\line(0,1){950}}
            \put(0,1200){\line(1,0){7200}}
          \end{picture}}
          &  &\ref{sss:2}\\
          &$(dd)$&\W\a_0+\a_2|\a_1+\a_3|&1&$\2$ &
          \boxit{\begin{picture}(6000,2400)(-300,-1200)
            \put(0,0){\usebox{\vertex}}
            \put(0,0){\usebox{\leftbiedge}}
            \put(3600,0){\usebox{\rightbiedge}}
            \put(1800,0){\usebox{\dynkinatwo}}
            \multiput(0,0)(3600,0){2}{\usebox{\wcircle}}
            \multiput(1800,0)(3600,0){2}{\usebox{\wcircle}}
            \multiput(0,-250)(3600,0){2}{\line(0,-1){950}}
            \multiput(1800,250)(3600,0){2}{\line(0,1){950}}
            \put(0,-1200){\line(1,0){3600}}
            \put(1800,1200){\line(1,0){3600}}
          \end{picture}}&   &\ref{sss:3} \\
          &$(b'b')$&\W3\a_0+2\a_1+\a_2|\a_1+2\a_2+3\a_3|&3&$\2$&
          \boxit{\begin{picture}(6000,600)(-300,-300)
            \put(0,0){\usebox{\vertex}}
            \put(0,0){\usebox{\leftbiedge}}
            \put(3600,0){\usebox{\rightbiedge}}
            \put(1800,0){\usebox{\dynkinatwo}}
            \multiput(0,0)(5400,0){2}{\circle*{600}}
          \end{picture}}& &\ref{ss:nl}\\
          &$(d\Ca)$&\WW\2\a_0+\2\a_2|-\pi_0+\pi_1-\pi_2|&&&
          \boxit{\begin{picture}(4200,1900)(-300,-300)
            \put(0,0){\usebox{\vertex}}
            \put(0,0){\usebox{\leftbiedge}}
            \put(1800,0){\usebox{\rightbiedge}}
            \multiput(0,0)(3600,0){2}{\circle{600}}
            \multiput(0,300)(3600,0){2}{\line(0,1){300}}
            \put(0,600){\line(1,0){3600}} \put(1500,1000){\tiny
              $\3$} \put(1500,-300){$\bt$}
          \end{picture}} &  &\ref{sss:2}\\
          &$(\Cc\Cc)$&\WWW\pi_0-\pi_2|-\pi_0+\pi_2|&&&
          \boxit{\begin{picture}(4200,600)(-300,-300)
            \put(0,0){\usebox{\vertex}}
            \put(0,0){\usebox{\leftbiedge}}
            \put(1800,0){\usebox{\rightbiedge}}
            \put(-400,-300){$\btr$} \put(3300,-300){$\btl$}
          \end{picture}}&  &\ref{ss:nl} \\
          \hline
          $\sE_{6}^{(2)}$&$(af)$&\W\a_0|2\a_1+3\a_2+2\a_3+\a_4|&1&&
          \boxit{\begin{picture}(7600,1800)(-300,-900)
            \put(0,0){\usebox{\dynkinathree}}
            \put(3600,0){\usebox{\leftbiedge}}
            \put(5400,0){\usebox{\dynkinatwo}}
            \put(0,0){\usebox{\aone}} \put(1800,0){\circle*{600}}
          \end{picture}}
          &   &\ref{sss:1} \\
          &$(bc)$&\W\a_0+2\a_1+2\a_2+\a_3|\a_2+\a_3+\a_4|&1&&
          \boxit{\begin{picture}(7600,600)(-100,-300)
            \put(0,0){\usebox{\dynkinathree}}
            \put(3600,0){\usebox{\leftbiedge}}
            \put(5400,0){\usebox{\dynkinatwo}}
            \put(7200,0){\circle*{600}} \put(1800,0){\circle*{600}}
          \end{picture}}
          & &\ref{ss:nl}\\
          &$(a\Ca)$&\WW\a_0+\a_1+\a_2|-\pi_0-\pi_2+\pi_3|&&&
          \boxit{\begin{picture}(7600,600)(-300,-300)
            \put(0,0){\usebox{\athree}}
            \put(3600,0){\usebox{\leftbiedge}}
            \put(5400,0){\usebox{\dynkinatwo}} \put(5100,-300){$\btr$}
          \end{picture}}
          &  &\ref{sss:1}\\
          \hline
          $\sD_4^{(3)}$&$(ag)$&\W\a_0|2\a_1+\a_2|&1&$2$&
          \boxit{\begin{picture}(4000,1800)(-300,-900)
            \put(0,0){\usebox{\dynkinatwo}}
            \put(1800,0){\usebox{\dynkingtwo}}
            \put(0,0){\usebox{\aone}} \put(1800,0){\circle*{600}}
          \end{picture}}
          &   &\ref{sss:1} \\
          &$(ad)$&\W\a_0+\a_2|2\a_1|&1&$\2$&
          \boxit{\begin{picture}(4200,1500)(-300,-1200)
            \put(0,0){\usebox{\dynkinatwo}}
            \put(1800,0){\usebox{\dynkingtwo}}
            \put(1800,0){\usebox{\aprime}}
            \multiput(0,0)(3600,0){2}{\usebox{\wcircle}}
            \multiput(0,-250)(3600,0){2}{\line(0,-1){950}}
            \put(0,-1200){\line(1,0){3600}}
          \end{picture}}
          &  &\ref{sss:2}\\
          &$(a\Ca)$&\WW\a_0+\a_1|-\pi_0-\pi_1+\pi_2|&&&
          \boxit{\begin{picture}(4200,600)(-300,-300)
            \put(0,0){\usebox{\atwo}}
            \put(1800,0){\usebox{\dynkingtwo}} \put(3300,-300){$\btr$}
  \end{picture}}
  &  &\ref{sss:1}\\
  \hline
\noalign{\begin{center}\bf The finite simple cases\end{center}}
\hline
$\sA_{n\ge1}$&$(a\Ca)$&\WW\a_1+\ldots+\a_{d-1}|-\pi_1-\pi_{d-1}+\pi_d|&&&\boxit{\begin{picture}(9400,1250)(-300,-300)
\put(0,0){\usebox\shortam}
\put(3600,0){\usebox\edge}
\put(5400,0){\usebox{\shortsusp}}
\put(7200,0){\usebox\dynkinatwo}
\put(5100,-300){$\btr$}
\put(5100,600){\tiny$\a_d$}
\end{picture}}
&$2\le d\le n$ &\ref{sss:1}
\\
&$(d_2\Ca)$&\WW\2\a_1+\2\a_3|-\pi_1+\pi_2-\pi_3|&&&
\boxit{\begin{picture}(4200,2200)(-300,-300)
\put(1500,1200){\tiny $\3$}
\put(0,0){\usebox\dynkinathree}
\multiput(0,0)(3600,0){2}{\circle{600}}
\multiput(0,300)(3600,0){2}{\line(0,1){600}}
\put(0,900){\line(1,0){3600}}
\put(1600,-300){$\bt$}
 \end{picture}}
 & &\ref{sss:2}\\
&$(d_3\Ca)$&\WW\a_1+2\a_2+\a_3|-2\pi_2+\pi_4|&&&
\boxit{\begin{picture}(5900,600)(-100,-300)
\put(0,0){\usebox\dynkinafour}
\put(1800,0){\circle*{600}}
\put(5200,-300){$\btl$}
\end{picture}}
& &\ref{sss:1}
\\
&$(\Ca\Ca)$&\WWW\pi_{d-1}-\pi_d|-\pi_{d-1}+\pi_d|&&&
\boxit{\begin{picture}(9200,1250)(-100,-300)
\put(0,0){\usebox\susp}
\put(3600,0){\usebox\dynkinatwo}
\put(5400,0){\usebox\susp}
\put(3300,-300){$\btl$}
\put(5100,-300){$\btr$}
\put(5100,600){\tiny$\a_d$}
 \end{picture}}
&$2\le d\le n$ &\ref{sss:1}
\\
&$(\Ca\Ca')$&\WWW\pi_1-\pi_n|-\pi_1+\pi_n|&&&
\boxit{\begin{picture}(9500,600)(-300,-300)
  \thicklines
    \multiput(0,0)(5400,0){2}{\usebox{\dynkinathree}}
    \put(3600,0){\usebox{\shortsusp}}
    \put(-350,-300){$\btr$}
    \put(8600, -300){$\btl$}
\end{picture}}
&$n\ge2$ &\ref{ss:nl}
\\
&$(a)$&\WW\a_1+\ldots+\a_n||&&$[2]_{n=1}$&
\boxit{\begin{picture}(6000,600)(-300,-300)
    \put(0,0){\usebox{\mediumam}}
  \end{picture}}
&&\ref{table:local}
\\
&$(d)$&\WW\a_1+2\a_2+\a_3||&&$\2$&
\boxit{\begin{picture}(3800,600)(-100,-300)
    \put(0,0){\usebox{\dynkinathree}}
        \put(1800,0){\circle*{600}}
      \end{picture}}&&\ref{table:local}
\\
      &$(\Ca)$&\WWW\pi_1||&&&
      \boxit{\begin{picture}(11200,600)(-300,-300)
\put(0,0){\usebox\dynkinathree}
\put(3600,0){\usebox\susp}
\put(7200,0){\usebox\dynkinathree}
\put(-350,-300){$\btr$}
\end{picture}}&&\ref{table:local}
\\
\hline
$\sB_{n\ge2}$&$(b\Ca)$&\WW\a_d+\ldots+\a_n|\pi_{d-1}-\pi_d|&&&
\boxit{\begin{picture}(9200,1250)(-100,-300)
\put(0,0){\usebox\shortsusp}
\put(1800,0){\usebox\dynkinatwo}
\put(3600,0){\usebox\shortsusp}
\put(5400,0){\usebox\dynkinbthree}
\put(3600,0){\circle*{600}}
\put(3200,600){\tiny$\a_d$}
\put(1500,-300){$\btl$}
\end{picture}}
&$2\le d\le n$ &\ref{sss:1}
\\
&$(d\Ca)$&\WW\2\a_1+\2\a_3|-\pi_1+\pi_2-\pi_3|&&&
\boxit{\begin{picture}(4200,2150)(-300,-300)
\put(1200,1200){\tiny $\3$}
\put(0,0){\usebox\dynkinbthree}
\multiput(0,0)(3600,0){2}{\circle{600}}
\multiput(0,300)(3600,0){2}{\line(0,1){600}}
\put(0,900){\line(1,0){3600}}
\put(1500,-300){$\bt$}
 \end{picture}}
& &\ref{sss:2}
\\
&$(d\Cc)$&\WW\a_1+2\a_2+\a_3|-2\pi_2+\pi_4|&&&
\boxit{\begin{picture}(5800,1300)(-100,-300)
\put(0,0){\usebox\dynkinbfour}
\put(1800,0){\circle*{600}}
\put(5100,-300){$\btl$}
\put(1300,400){\tiny $\3$}
\end{picture}}
& &\ref{ss:nl}
\\
&$(b'\Ca)$&\WW\a_2+2\a_3+3\a_4|\pi_1-2\pi_4|&&&
\boxit{\begin{picture}(6000,600)(-300,-300)
\put(0,0){\usebox\dynkinbfour}
\put(5400,0){\circle*{600}}
\put(-400,-300){$\btr$}
\end{picture}}
& &\ref{ss:nl}
\\
&$(\Ca\Ca)$&\WWW\pi_{n-1}-\pi_n|-\pi_{n-1}+\pi_n|&&&
\boxit{\begin{picture}(7600,600)(-100,-300)
\put(0,0){\usebox\dynkinatwo}
\put(1800,0){\usebox\shortsusp}
\put(3600,0){\usebox\dynkinbthree}
\put(5100,-300){$\btl$}
\put(6900,-300){$\btr$}
\end{picture}}
& &\ref{sss:1}
\\
&$(\Ca\Cc)$&\WWW\pi_1-\pi_3|-\pi_1+\pi_3|&&&
\boxit{\begin{picture}(4200,600)(-300,-300)
\put(0,0){\usebox\dynkinbthree}
\put(3300,-300){$\btl$}
\put(-400,-300){$\btr$}
 \end{picture}}
& &\ref{ss:nl}
\\
&$(b)$&\WW\a_1+\ldots+\a_n||&&$2$&
\boxit{\begin{picture}(7600,600)(-300,-300)
    \put(0,0){\usebox{\shortbm}}
    \end{picture}}
&&\ref{table:local}
\\
&$(b')$&\WW\a_1+2\a_2+3\a_3||&&$\2$&
\boxit{\begin{picture}(4000,600)(-100,-300)
    \put(0,0){\usebox{\bthirdthree}}
    \end{picture}}
  &&\ref{table:local}
\\
\hline
$\sC_{n\ge3}$&$(a\Cc)$&\WW\a_1+\ldots+\a_{d-1}|-\pi_1-\pi_{d-1}+\pi_d|&&&\boxit{\begin{picture}(9400,1250)(-300,-300)
\put(0,0){\usebox\shortam}
\put(3600,0){\usebox\dynkinatwo}
\put(5400,0){\usebox\shortsusp}
\put(7200,0){\usebox\leftbiedge}
\put(5100,-300){$\btr$}
\put(5100,600){\tiny$\a_d$}
\end{picture}}
&$2\le d\le n$ &\ref{sss:1}\\  
&$(c\Ca)$&\WW\a_2+2\a_3+\ldots+2\a_{n-1}+\a_n|\pi_1-\pi_3|&&&
\boxit{\begin{picture}(9400,600)(-300,-300)
\put(0,0){\usebox\dynkinathree}
\put(3600,0){\usebox\shortsusp}
\put(5400,0){\usebox\dynkincthree}
\put(-400,-300){$\btr$}
\put(3600,0){\circle*{600}}
\end{picture}}
& &\ref{ss:nl}\\
&$(\Ca\Cc)$&\WWW\pi_{d-1}-\pi_d|-\pi_{d-1}+\pi_d|&&&
\boxit{\begin{picture}(11000,1250)(-100,-300)
    \put(0,0){\usebox\dynkinatwo}
    \put(1800,0){\usebox\shortsusp}
\put(3600,0){\usebox\dynkinatwo}
\put(5400,0){\usebox\shortsusp}
\put(7200,0){\usebox\dynkincthree}
\put(5100,-300){$\btr$}
\put(5100,600){\tiny$\a_d$}
\put(3300,-300){$\btl$}
\end{picture}}
&$2\le d\le n$ &\ref{sss:1}
\\
&$(c)$&\WW\a_1+2\a_2+\ldots+2\a_{n-1}+\a_n||&&&
\boxit{\begin{picture}(9200,600)(-100,-300)
    \put(0,0){\usebox{\shortcm}}
  \end{picture}}
&&\ref{table:local}
\\
&$(\Cc)$&\WWW\pi_1||&&&
\boxit{\begin{picture}(11200,600)(-300,-300)
\put(0,0){\usebox\dynkinathree}
\put(3600,0){\usebox\susp}
\put(7200,0){\usebox\dynkincthree}
\put(-400,-300){$\btr$}
\end{picture}}
&&\ref{table:local}
\\
\hline
$\sD_{n\ge4}$&$(d\Ca)$&\WW\a_d+\ldots+\a_{n-2}+\2\a_{n-1}+\2\a_n|\pi_{d-1}-\pi_d,\ (\pi_{n-2}-\pi_{n-1}-\pi_n)|&&&
\boxit{\begin{picture}(8600,2600)(-100,-1300)
\put(0,0){\usebox\shortsusp}
\put(1800,0){\usebox\dynkinatwo}
\put(3600,0){\usebox\shortsusp}
\put(5400,0){\usebox\dynkindfour}
\put(3600,0){\circle*{600}}
\put(3100,600){\tiny$\3$}
\put(3300,-1000){\tiny$\a_d$}
\put(1500,-300){$\btl$}
\end{picture}}
&$2\le d<n$ &\ref{sss:1}/\ref{sss:2}
\\
&$(a\Ca)$&\WW\a_1+\ldots+\a_{n-1}|-\pi_1-\pi_{n-1}+\pi_n|&&&
\boxit{\begin{picture}(7200,3000)(-300,-1500)
\put(0,0){\usebox\amne}
\put(6300,-1550){$\btlu$}
\end{picture}}
& &\ref{ss:nl}
\\
&$(d_4\Ca)$&\WW\a_2+2\a_3+\a_4+2\a_5|\pi_1-2\pi_5|&&&
\boxit{\begin{picture}(5400,2800)(-300,-1500)
\put(0,0){\usebox\dynkinatwo}
\put(1800,0){\usebox\dynkindfour}
\put(-400,-300){$\btr$}
\put(4800,-1200){\circle*{600}}
\end{picture}}
& &\ref{ss:nl}
\\
&$(\Ca\Ca)$&\WWW\pi_{n-1}-\pi_n|-\pi_{n-1}+\pi_n|&&&
\boxit{\begin{picture}(7000,3000)(-100,-1500)
\put(0,0){\usebox\susp}
\put(3600,0){\usebox\dynkindfour}
\put(6300,950){$\btld$}
\put(6300,-1550){$\btlu$}
\end{picture}}
& &\ref{ss:nl}
\\
&$(d)$&\WW 2\a_1+\ldots+2\a_{n-2}+\a_{n-1}+\a_n||&&$\2$&
\boxit{\begin{picture}(7000,2600)(-300,-1300)
\put(0,0){\usebox{\shortdm}}
\end{picture}}&&\ref{table:local}
\\
\hline
$\sF_4$&$(c\Ca)$&\WW\a_2+2\a_3+\a_3|\pi_1-\pi_3|&&&
\boxit{\begin{picture}(5800,600)(-300,-300)
\put(0,0){\usebox\dynkinffour}
\put(3600,0){\circle*{600}}
\put(-400,-300){$\btr$}
\end{picture}}
& &\ref{ss:nl}\\
&$(a\Ca)$&\WW\a_3+\a_4|\pi_2-\pi_3-\pi_4|&&&
\boxit{\begin{picture}(5800,600)(-100,-300)
\put(0,0){\usebox\dynkinffour}
\put(3600,0){\usebox\atwo}
\put(1500,-300){$\btl$}
\end{picture}}
& &\ref{sss:1}\\
&$(b\Cc)$&\WW\a_1+\a_2+\a_3|-\pi_1+\pi_4|&&&
\boxit{\begin{picture}(6000,600)(-300,-300)
\put(0,0){\usebox\dynkinffour}
\put(0,0){\circle*{600}}
\put(5100,-300){$\btl$}
\end{picture}}
& &\ref{ss:nl}\\
&$(\Ca\Ca)$&\WWW\pi_2-\pi_3|-\pi_2+\pi_3|&&&
\boxit{\begin{picture}(5600,600)(-100,-300)
\put(0,0){\usebox\dynkinffour}
\put(1500,-300){$\btl$}
\put(3300,-300){$\btr$}
\end{picture}}
& &\ref{sss:1}\\
&$(f)$&\WW\a_1+2\a_2+3\a_3+2\a_4||&&&
\boxit{\begin{picture}(5800,600)(-100,-300)
\put(0,0){\usebox{\ffour}}
\end{picture}}
&&\ref{table:local}
\\
\hline
$\sG_2$&$(a\Ca)$&\WW\a_1|-2\pi_1+\pi_2|&&&
\boxit{\begin{picture}(2400,1800)(-300,-900)
\put(0,0){\usebox\dynkingtwo}
\put(0,0){\usebox\aone}
\put(1500,-300){$\btr$}
\end{picture}}
& &\ref{sss:1}\\
&$(\Ca\Ca)$&\WWW\pi_1-\pi_2|-\pi_1+\pi_2|&&&
\boxit{\begin{picture}(2400,600)(-300,-300)
\put(0,0){\usebox\dynkingtwo}
\put(1500,-300){$\btr$}
\put(-400,-300){$\btl$}
\end{picture}}
& &\ref{sss:1}
\\
&$(g)$&\WW2\a_1+\a_2||&&$2$&
\boxit{\begin{picture}(2200,600)(-300,-300)
\put(0,0){\usebox{\gtwo}}
\end{picture}}
&&\ref{table:local}\\
\hline
\noalign{\begin{center}\bf The reducible cases\end{center}}
\hline
$\sA_1^{(1)}\times \sA_1^{(1)}$&$(dd)$&\W\a_0+\a_0'|\a_1+\a_1'|&&$\2$&
      \boxit{\begin{picture}(5100,1800)(-300,-900)
        \put(0,0){\usebox\leftrightbiedge}
        \put(2700,0){\usebox\leftrightbiedge}
        \multiput(1800,0)(2700,0){2}{\usebox\wcircle}
        \multiput(1800,300)(2700,0){2}{\line(0,1){600}}
        \put(1800,900){\line(1,0){2700}}
        \multiput(0,0)(2700,0){2}{\usebox\wcircle}
        \multiput(0,-300)(2700,0){2}{\line(0,-1){600}}
        \put(0,-900){\line(1,0){2700}}
      \end{picture}}
      &$\frac\d{\|\a_i\|}{=}\frac{\d'}{\|\a_i'\|}$ &\ref{sss:2}\\
$\sA_2^{(2)}\times \sA_2^{(2)}$&$(dd)$&\W\a_0+\a_0'|\2\a_1+\2\a_1'|&&&
      \boxit{\begin{picture}(5100,2750)(-300,-900)
        \multiput(2700,0)(1800,0){2}{\circle*{300}}
        \multiput(2700,-60)(0,120){2}{\line(1,0){1800}}
        \multiput(2700,-180)(0,360){2}{\line(1,0){1800}}
        \multiput(2850,0)(25,25){20}{\circle*{50}}
        \multiput(2850,0)(25,-25){20}{\circle*{50}}
        \multiput(0,0)(1800,0){2}{\circle*{300}}
        \multiput(0,-60)(0,120){2}{\line(1,0){1800}}
        \multiput(0,-180)(0,360){2}{\line(1,0){1800}}
        \multiput(150,0)(25,25){20}{\circle*{50}}
        \multiput(150,0)(25,-25){20}{\circle*{50}}
        \multiput(1800,0)(2700,0){2}{\usebox\wcircle}
        \multiput(1800,300)(2700,0){2}{\line(0,1){600}}
        \put(1800,900){\line(1,0){2700}}
        \multiput(0,0)(2700,0){2}{\usebox\wcircle}
        \multiput(0,-300)(2700,0){2}{\line(0,-1){600}}
        \put(0,-900){\line(1,0){2700}}
        \put(2700,1200){\tiny $\3$}
      \end{picture}}
      &$\frac\d{\|\a_i\|}{=}\frac{\d'}{\|\a_i'\|}$ &\ref{sss:2}\\
      \hline
$\sA_1\times \sA_1^{(1)}$&$(d\Ca)$&\WW\2\a_1+\2\a_0'|-\pi_1-\pi_0'+\pi_1'|&&&
      \boxit{\begin{picture}(5100,1200)(-300,-900)
        \put(0,0){\usebox{\vertex}}
        \put (2700,0){\usebox\leftrightbiedge}
        \put(4200,-300){$\bt$}
        \multiput(0,0)(2700,0){2}{\usebox\wcircle}
        \multiput(0,-300)(2700,0){2}{\line(0,-1){600}}
        \put(0,-900){\line(1,0){2700}}
        \put(900,-600){\tiny $\3$}
      \end{picture}}
      & &\ref{sss:1}\\
$\underset{n\ge1}{\sA_1\times \sA_{2n}^{(2)}}$&$(d\Cc)$&\WW\a_1+\a_0'|-2\pi_1-2\pi_0'+\pi_1'|&&&
      \boxit{\begin{picture}(10300,1200)(-300,-900)
        \put(0,0){\usebox{\vertex}}
        \put(2700,0){\usebox{\vertex}}
        \put (2700,0){\usebox\leftbiedge}
        \put(4500,0){\usebox\susp}
        \put(8100,0){\usebox\leftbiedge}
        \put(4200,-300){$\btr$}
        \multiput(0,0)(2700,0){2}{\usebox\wcircle}
        \multiput(0,-300)(2700,0){2}{\line(0,-1){600}}
        \put(0,-900){\line(1,0){2700}}
      \end{picture}}
      & &\ref{sss:1}\\
$\sA_1\times \sC_{n\ge2}^{(1)}$&$(d\Cc)$&\WW\2\a_1+\2\a_0'|-\pi_1-\pi_0'+\pi_1'|&&&
      \boxit{\begin{picture}(10300,1200)(-300,-900)
        \put(0,0){\usebox{\vertex}}
        \put (2700,0){\usebox\rightbiedge}
        \put(4500,0){\usebox\susp}
        \put(8100,0){\usebox\leftbiedge}
        \put(4200,-300){$\btr$}
        \multiput(0,0)(2700,0){2}{\usebox\wcircle}
        \multiput(0,-300)(2700,0){2}{\line(0,-1){600}}
        \put(0,-900){\line(1,0){2700}}
        \put(900,-600){\tiny $\3$}
      \end{picture}}
      & &\ref{sss:1}\\
$\sA_1\times \sG_2^{(1)}$&$(d\Ca)$&\WW\a_1+\a_0'|-2\pi_1-2\pi_0'+\pi_1'|&&&
      \boxit{\begin{picture}(6700,1200)(-300,-900)
        \put(0,0){\usebox{\vertex}}
        \put (2700,0){\usebox\lefttriedge}
        \put(4500,0){\usebox\dynkinatwo}
        \put(4200,-300){$\btr$}
        \multiput(0,0)(2700,0){2}{\usebox\wcircle}
        \multiput(0,-300)(2700,0){2}{\line(0,-1){600}}
        \put(0,-900){\line(1,0){2700}}
      \end{picture}}
      & &\ref{sss:1}\\
      \hline
$\sA_1\times \sA_{n\ge2}$&$(d\Ca)$&\WW\a_1+\a_1'|-2\pi_1-2\pi_1'+\pi_2'|&&&
      \boxit{\begin{picture}(8500,1200)(-300,-900)
        \put(0,0){\usebox{\vertex}}
        \put (2700,0){\usebox\dynkinatwo}
        \put (4500,0){\usebox\susp}
        \put(4200,-300){$\btr$}
        \multiput(0,0)(2700,0){2}{\usebox\wcircle}
        \multiput(0,-300)(2700,0){2}{\line(0,-1){600}}
        \put(0,-900){\line(1,0){2700}}
      \end{picture}}
      & &\ref{sss:1}\\
$\sA_1\times \sC_{n\ge3}$&$(d\Cc)$&\WW\a_1+\a_1'|-2\pi_1-2\pi_1'+\pi_2'|&&&
      \boxit{\begin{picture}(10300,1200)(-300,-900)
        \put(0,0){\usebox{\vertex}}
        \put (2700,0){\usebox\dynkinatwo}
        \put (4500,0){\usebox\susp}
        \put(8100,0){\usebox\leftbiedge}
        \put(4200,-300){$\btr$}
        \multiput(0,0)(2700,0){2}{\usebox\wcircle}
        \multiput(0,-300)(2700,0){2}{\line(0,-1){600}}
        \put(0,-900){\line(1,0){2700}}
      \end{picture}}
      & &\ref{sss:1}\\
$\sA_1\times \sB_{n\ge2}$&$(d\Ca)$&\WW\a_1+\a_1'|-2\pi_1-2\pi_1'+\pi_2'|&&&
      \boxit{\begin{picture}(8500,1200)(-300,-900)
        \put(0,0){\usebox{\vertex}}
        \put(2700,0){\usebox{\vertex}}
        \put (2700,0){\usebox\leftbiedge}
        \put (4500,0){\usebox\susp}
        \put(4200,-300){$\btr$}
        \multiput(0,0)(2700,0){2}{\usebox\wcircle}
        \multiput(0,-300)(2700,0){2}{\line(0,-1){600}}
        \put(0,-900){\line(1,0){2700}}
      \end{picture}}
      & &\ref{sss:1}\\
$\sA_1\times \sC_{n\ge2}$&$(d\Ca)$&\WW\2\a_1+\2\a_1'|-\pi_1-\pi_1'+\pi_2'|&&&
      \boxit{\begin{picture}(8500,1200)(-300,-900)
        \put(0,0){\usebox{\vertex}}
        \put(2700,0){\usebox\rightbiedge}
        \put(4200,-300){$\btr$}
        \put(900,-600){\tiny $\3$}
        \multiput(0,0)(2700,0){2}{\usebox\wcircle}
        \multiput(0,-300)(2700,0){2}{\line(0,-1){600}}
        \put(0,-900){\line(1,0){2700}}
        \put (4500,0){\usebox\susp}
      \end{picture}}
      & &\ref{sss:1}\\
$\sA_1\times \sG_2$&$(d\Ca)$&\WW\a_1+\a_1'|-2\pi_1-2\pi_1'+\pi_2'|&&&
      \boxit{\begin{picture}(5100,1200)(-300,-900)
        \put(0,0){\usebox{\vertex}}
        \put(2700,0){\usebox\lefttriedge}
        \put(4200,-300){$\bt$}
        \multiput(0,0)(2700,0){2}{\usebox\wcircle}
        \multiput(0,-300)(2700,0){2}{\line(0,-1){600}}
        \put(0,-900){\line(1,0){2700}}
      \end{picture}}
      & &\ref{sss:1}\\
$\sA_{m\ge1}\times \sA_{n\ge1}$&$(\Ca\Ca)$&\WWW\pi_1-\pi_1'|-\pi_1+\pi_1'|&&&
      \boxit{$\underset{\begin{picture}(9400,600)(-300,-300)
        \put(0,0){\usebox\dynkinatwo}
        \put(1800,0){\usebox\susp}
        \put(5400,0){\usebox\dynkinathree}
        \put(-350,-300){$\btr$}
      \end{picture}}
      {\begin{picture}(9400,600)(-300,-300)
        \put(0,0){\usebox\dynkinatwo}
        \put(1800,0){\usebox\susp}
        \put(5400,0){\usebox\dynkinathree}
        \put(-350,-300){$\btr$}
      \end{picture}}$}
      & &\ref{sss:0}\\
$\sA_{m\ge1}\times \sC_{n\ge2}$&$(\Ca\Cc)$&\WWW\pi_1-\pi_1'|-\pi_1+\pi_1'|&&&
\boxit{$\underset
      {\begin{picture}(9400,600)(-300,-300)
        \put(0,0){\usebox\dynkinatwo}
        \put(1800,0){\usebox\susp}
        \put(5400,0){\usebox\dynkincthree}
        \put(-350,-300){$\btr$}
      \end{picture}}
    {\begin{picture}(9400,600)(-300,-300)
        \put(0,0){\usebox\dynkinatwo}
        \put(1800,0){\usebox\susp}
        \put(5400,0){\usebox\dynkinathree}
        \put(-350,-300){$\btr$}
      \end{picture}}$}
      & &\ref{sss:0}\\
      $\sC_{m\ge2}\times \sC_{n\ge2}$&$(\Cc\Cc)$&\WWW\pi_1-\pi_1'|-\pi_1+\pi_1'|&&&
      \boxit{$\underset{\begin{picture}(9400,600)(-300,-300)
        \put(0,0){\usebox\dynkinatwo}
        \put(1800,0){\usebox\susp}
        \put(5400,0){\usebox\dynkincthree}
        \put(-350,-300){$\btr$}
      \end{picture}}
     {\begin{picture}(9400,600)(-300,-300)
        \put(0,0){\usebox\dynkinatwo}
        \put(1800,0){\usebox\susp}
        \put(5400,0){\usebox\dynkincthree}
        \put(-350,-300){$\btr$}
      \end{picture}}$}
      & &\ref{sss:0}\\
      \hline
      $\sA_1\times \sA_1$&$(d)$&\WW\a_1+\a_1'||&&$\2$&
      \boxit{\begin{picture}(3300,1200)(-300,-900)
        \put(0,0){\usebox{\vertex}}
        \put(2700,0){\usebox{\vertex}}
        \multiput(0,0)(2700,0){2}{\usebox\wcircle}
        \multiput(0,-300)(2700,0){2}{\line(0,-1){600}}
        \put(0,-900){\line(1,0){2700}}
      \end{picture}}
      &&\ref{table:local}\\
      \hline
    \end{longtable}
  }
\end{lis}

\end{document}